\newtheorem{theorem}{Theorem}
\newtheorem{lemma}[theorem]{Lemma}
\newtheorem{remark}[theorem]{Remark}
\newenvironment{proof}[1][Proof]{\noindent\textbf{#1.} }{\ \rule{0.5em}{0.5em}}
\newdimen\dummy
\def\MM{\mathbb{M}}
\def\RR{\mathbb{R}}
\def\be{\begin{equation}}
\def\ee{\end{equation}}
\let\ds\displaystyle
\let\eps\varepsilon
\begin{document}

\title{An Asymptotic-Preserving method for highly anisotropic elliptic
  equations based on a micro-macro decomposition}

\author{
  Pierre Degond\footnotemark[2]\ \footnotemark[3] 
  \and Alexei Lozinski\footnotemark[2]
  \and Jacek Narski\footnotemark[2] 
  \and Claudia Negulescu\footnotemark[4] } 

\renewcommand{\thefootnote}{\fnsymbol{footnote}}

\footnotetext[2]{Universit\'e de Toulouse, UPS, INSA, UT1, UTM, Institut de Math\'ematiques de Toulouse, F-31062 Toulouse, France}
\footnotetext[3]{CNRS, Institut de Math\'ematiques de Toulouse UMR 5219, F-31062 Toulouse, France}
\footnotetext[4]{CMI/LATP, Universit\'e de Provence, 39 rue Fr\'ed\'eric Joliot-Curie 13453 Marseille cedex 13}

\renewcommand{\thefootnote}{\arabic{footnote}}
\maketitle

\abstract{The concern of the present work is the introduction of a
  very efficient Asymptotic Preserving scheme for the resolution of
  highly anisotropic diffusion equations. The characteristic features
  of this scheme are the uniform convergence with respect to the
  anisotropy parameter $0<\eps <<1$, the applicability (on cartesian
  grids) to cases of non-uniform and non-aligned anisotropy fields $b$
  and the simple extension to the case of a non-constant anisotropy
  intensity $1/\eps$. The mathematical approach and the numerical
  scheme are different from those presented in the previous work
  [Degond et al. (2010), arXiv:1008.3405v1] and its considerable
  advantages are pointed out.}
\section{Introduction}\label{SEC1}

The numerical resolution of highly anisotropic physical problems is a
challenging task. In particular, it is difficult to capture the
behaviour of physical phenomena characterized by strong anisotropic
features since a straight-forward discretization leads typically to
very ill-conditioned problems. In the class of problems addressed in
this paper the anisotropy is aligned with a vector field which may be
variable in space/time. Such problems are encountered in many physical
applications, for example flows in porous media \cite{porous1,TomHou},
semiconductor modeling \cite{semicond}, quasi-neutral plasma
simulations \cite{Navoret}, the list of possible applications being
not exhaustive. The motivation of this work is closely related to the
magnetized plasma simulations such as atmospheric plasmas
\cite{Kelley2,Kes_Oss}, internal fusion plasmas \cite{Beer,Sangam} or
plasma thrusters \cite{SPT}. In this case the anisotropy direction is
defined by a magnetic field confining the particles around the field
lines, the anisotropy intensity $1/\eps$ reaching orders of magnitude
as high as $10^{10}$.  The difficulty with these anisotropic problems
is that they become singular in the limit $\eps \rightarrow 0$, where
$\eps$ is a small parameter responsible for the strong anisotropy of
the problem. A straightforward discretization of such problems (using,
for example, finite difference methods) results in the inversion of a
very badly conditioned linear system, which becomes unfeasible for
$\eps <<1$.  In this paper we present a new approach based on the so
called Asymptotic Preserving reformulation introduced initially in
\cite{ShiJin}. This work presents an important improvement to the
method presented in the previous paper \cite{DDLNN} and is aimed to
treat in a precise manner a variety of strong anisotropies with low
numerical costs.

The model problem we are interested in,  reads
\begin{gather}
  \left\{ 
    \begin{array}{ll}
      - \nabla \cdot \mathbb A \nabla u^{\varepsilon } = f       & \text{ in }
      \Omega, \\[3mm]
      n \cdot \mathbb A \nabla u^{\varepsilon }= 0
      & \text{ on } \Gamma_N\,,\\[3mm]
      u^{\varepsilon }= 0
      & \text{ on } \Gamma_D\,,
    \end{array}
  \right.
  \label{InitP}
\end{gather}
where $\Omega \subset \RR^{2}$ or $\Omega \subset \RR^{3}$ is a
bounded domain with boundary $\partial \Omega = \Gamma_D \cup
\Gamma_N$ and outward normal $n$. The direction of the anisotropy is
given by a vector field $B$, where we suppose $\text{div} B = 0$ and
$B \neq 0$. The direction of $B$ shall be denoted by the unit vector field
$b = B/|B|$. The domain boundary is decomposed into $\Gamma_D:= \{ x
\in \partial\Omega \ | \ b (x) \cdot n = 0 \}$
and $\Gamma_N:= \partial \Omega \backslash \Gamma_D$.  The
anisotropic diffusion matrix is then given by
\begin{gather} \mathbb A = \frac{1}{\varepsilon }
  A_\parallel b \otimes b + (Id - b \otimes b)A_\perp (Id - b \otimes
  b)\,.
  \label{eq:Jh0a}
\end{gather}
The scalar field $A_\parallel>0$ and the symmetric positive definite
matrix field $A_\perp$ are of order one while the parameter $0 <
\varepsilon < 1$ can be very small, provoking thus the high anisotropy
of the problem.  The system becomes ill posed if we consider the
formal limit $\eps \rightarrow 0$. It is thus very ill conditioned for
$\eps << 1$. The goal of the present paper is to circumvent this
difficulty and to propose a numerical scheme which is uniformly
convergent with respect to the parameter $\eps$.

This model problem has been studied before in the Asymptotic
Preserving context, see for example \cite{DDN,besse,DDLNN}. The key
idea was to decompose the solution $u^\eps$ into two parts: $p^\eps$,
which is constant along the anisotropy direction and $q^\eps$, which
contains the fluctuating part. The resulting modified linear system is
bigger than the original problem but has the important feature of
reducing, as $\eps \rightarrow 0$, to the so-called {\it Limit} model
(L-model), which is a well-posed system satisfied by the limit
solution $u^0= \lim_{\eps \rightarrow 0} u^\eps$. In other words, this
AP-procedure transforms a singularly perturbed problem in an
equivalent regularly perturbed one.

In \cite{DDN}, a special case of an anisotropy aligned with the
$z$-axis was studied. The Asymptotic Preserving reformulation was
obtained in the following way. Firstly, the original problem was
integrated along the anisotropy direction, resulting in an
$\eps$-independent elliptic equation for the mean part $p^\eps$. Then,
the mean equation was subtracted from the original problem giving rise
to an elliptic equation for the fluctuating part $q^\eps$. A
generalization of this approach was proposed in \cite{besse}, in the
framework of curvilinear anisotropy fields $b$. The introduction of an
adapted curvilinear coordinate system with one coordinate aligned with
the anisotropy direction allowed to reduce the problem to the one
studied in \cite{DDN} and hence to address more realistic problems, as
for example the ionospheric plasma simulations. 

In \cite{DDLNN}, the original method of \cite{DDN} is generalized for
arbitrary fields $b$ in a different manner. Instead of performing a
coordinate-system transformation and integrating along the anisotropy
direction, the new technique uses a Cartesian grid in combination with
an adapted mathematical framework. The mean part $p^\eps$ is forced to
be in the space of functions constant along the $b$ field by means of
a Lagrange multiplier technique. Similarly, the fluctuating part
$q^\eps$ is forced to be in the orthogonal space (with respect to the
$L^2$-norm). This approach requires the introduction of three Lagrange
multipliers and results in a system with five unknowns. The advantage
of this method lies in its generality. Since no change of the
coordinate system is needed, no computation of the field lines, no
integration along them as well as no mesh adaptation is required, this
numerical method is easy to implement even for anisotropies varying in
time. The method works well on Cartesian grids even for strongly
varying $b$ fields. We will refer to the method of \cite{DDLNN} as the
Duality-Based (DB) Asymptotic Preserving reformulation or simply as
the DB-method, to emphasize the extensive use of Lagrange multipliers
in its derivation.

In the present paper we introduce a novel approach. It is as general
as the DB-method, i.e. no change of coordinate system nor mesh
adaptation is required. It is however much more efficient in terms of
computational cost. The new idea arises from the following questions:
does $p^\eps$ need to be the average of $u^\eps$ along the anisotropy
direction? Does $q^\eps$ need to be orthogonal to $p^\eps$, with
respect to the $L^2$-norm? Is $u^\eps = p^\eps + q^\eps$ an optimal
decomposition?  In this paper we exploit a different decomposition:
$u^\eps = p^\eps + \eps q^\eps$, with $q^\eps$ not being orthogonal to
$p^\eps$. Instead, $q^\eps$ belongs to the space of functions
vanishing on the ``inflow" part of the boundary, i.e. where $b\cdot
n<0$.  This space is easy to discretize without introducing any
additional Lagrange multiplier.  As a result, $p^\eps$ is no longer an
average of $u^\eps$, however we still choose it to be constant along
the anisotropy direction.  The AP reformulation of the original
equation is then obtained in terms of $u^\eps$ and $q^\eps$. We use
first the fact that the only component of $u^\eps$ that varies along
the anisotropy direction is $\eps q^\eps$, and thus replace
$\nabla_{||} u^\eps$ by $\eps \nabla_{||} q^\eps$ in the term of order
$1/\eps$. We obtain thus an equation whose coefficients are all of
order one. We should add also a second equation assuring than $p^\eps
=u^\eps - \eps q^\eps$ is indeed constant along the field lines.  The
resulting system consists of only two equations compared to the five
equations of the DB-method. It should be noted that, while keeping the
advantages of the DB-method (no need of mesh- or
coordinate-system-adaptation with respect to the anisotropy direction,
uniform convergence with respect to $\eps$), the new method is far
more efficient in terms of memory requirements as well as
computational time. The new method will be referred to as the
Micro-Macro (MM) Asymptotic Preserving reformulation or simply as the
MM-method, to emphasize the fact that the fluctuating part $q^\eps$ of
the solution is now rescaled by $\eps$. We will prove, in particular,
that $q^\eps$ does not explode as $\eps\to 0$, so that the ansatz
$u^\eps = p^\eps + \eps q^\eps$ does indeed introduce a proper
rescaling for the ``micro"-fluctuations over the ``macro" part of the
solution contained in $p^\eps$.

The paper is organized as follows. Section \ref{SEC2} introduces the
Singularly Perturbed elliptic problem (P-model) and the new Asymptotic
Preserving reformulation (MM-method). We also perform a mathematical
study of this reformulation, in particular, of the convergence towards
the Limit model, as $\eps \rightarrow 0$. In Section \ref{SEC4} we
present the discretization and the numerical results of various test
cases for constant and variable anisotropy fields $b$. We compare the
efficiency of the new MM-method with the previously proposed
DB-method. Finally, in Section \ref{GENE} we propose a generalization
of the MM-method to the case of a non-constant anisotropy parameter
$\eps$. The detailed numerical analysis of the MM-method is postponed
to a forthcoming work.

\section{The mathematical problem} \label{SEC2}
Let $b$
be a smooth field in a domain $\Omega \subset \RR^d$, with $d=2,3$,
and let us decompose the boundary $%
\Gamma =\partial \Omega $ into three components following the sign of
the intersection with $b$:
$$
\Gamma _{D}:= \{ x \in \Gamma \,\, / \,\, b(x) \cdot n(x) =0 \}\,,
\quad \Gamma _{in}:= \{ x \in \Gamma \,\, / \,\, b(x) \cdot n(x) <0
\}\,, \quad \Gamma _{out}:= \{ x \in \Gamma \,\, / \,\, b(x) \cdot
n(x) > 0 \}\, .
$$
The vector $n$ is here the unit outward normal on $\Gamma $. We denote
$\Gamma _{N}:=\Gamma _{in}\cup \Gamma _{out}$. We also assume $b \in
(C^{\infty}(\bar\Omega))^d$ and $|b(x)|=1$ for all $x \in \bar\Omega$.

Given this vector field $b$, one can decompose now vectors
$v \in \mathbb R^d$, gradients $\nabla \phi$, with $\phi(x)$ a scalar
function, and divergences $\nabla \cdot v$, with $v(x)$ a vector
field, into a part parallel to the anisotropy direction and a part
perpendicular to it.  These parts are defined as follows:
\begin{equation} 
  \begin{array}{llll}
    \ds v_{||}:= (v \cdot b) b \,, & \ds v_{\perp}:= (Id- b \otimes b) v\,, &\textrm{such that}&\ds
    v=v_{||}+v_{\perp}\,,\\[3mm]
    \ds \nabla_{||} \phi:= (b \cdot \nabla \phi) b \,, & \ds
    \nabla_{\perp} \phi:= (Id- b \otimes b) \nabla \phi\,, &\textrm{such that}&\ds
    \nabla \phi=\nabla_{||}\phi+\nabla_{\perp}\phi\,,\\[3mm]
    \ds \nabla_{||} \cdot v:= \nabla \cdot v_{||}  \,, & \ds
    \nabla_{\perp} \cdot v:= \nabla \cdot v_{\perp}\,, &\textrm{such that}&\ds
    \nabla \cdot v=\nabla_{||}\cdot v+\nabla_{\perp}\cdot v\,,
  \end{array}
\end{equation}
where we denoted by $\otimes$ the vector tensor product. With these
notations we can now introduce the mathematical problem, the so-called
Singular Perturbation problem, whose numerical solution is the main
concern of this paper.

\subsection{The Singular-Perturbation problem (P-model)} \label{SEC21}
Our starting problem is the following Singular-Perturbation problem (P-problem)
\begin{gather}
    (P)\,\,\,
  \left\{
    \begin{array}{ll}
      -{1 \over \varepsilon} \nabla_\parallel \cdot 
      \left(A_\parallel \nabla_\parallel u^{\varepsilon }\right) 
      - \nabla_\perp \cdot 
      \left(A_\perp \nabla_\perp u^{\varepsilon }\right) 
      = f 
      & \text{ in } \Omega, \\[3mm]
      {1 \over \varepsilon} 
      n_\parallel \cdot 
      \left( A_\parallel \nabla_\parallel u^{\varepsilon } \right)
      +
      n_\perp \cdot 
      \left(A_\perp \nabla_\perp u^{\varepsilon }\right) 
      = 0
      & \text{ on } \Gamma_N,  \\[3mm]
      u^{\varepsilon }= 0
      & \text{ on } \Gamma_D\,.
    \end{array}
  \right.
  \label{P} 
\end{gather}
The diffusion coefficients and the source term satisfy 

\noindent {\bf Hypothesis A} \label{hypo} {\it Let $f \in
  L^2(\Omega)$, $0<\eps <1$ be a fixed arbitrary parameter and
  $\overset{\circ}{\Gamma_D} \neq \varnothing$.  The diffusion
  coefficients $A_{||} \in L^{\infty}(\Omega)$ and $A_{\perp} \in
  \MM_{d \times d} (L^{\infty} (\Omega))$ are supposed to satisfy
  \begin{gather}
        0<A_0 \le A_{||}(x) \le A_1\,, \quad  \textrm{f.a.a}\,\,\,x \in \Omega,
        \label{hyp1}
        \\[3mm]
        A_0 ||v||^2 \le v^t A_{\perp}(x) v \le A_1 ||v||^2\,,
        \quad \forall v\in \mathbb R^d\,\,\, \text{with} \,\,\, v\cdot
        b(x)=0\,\,\, \text{and} \,\,\,  \textrm{f.a.a}\,\,\, x \in \Omega,
        \label{hyp2}
  \end{gather}
with some constants $0<A_0 \le A_1$.
}\\
For the beginning we shall consider a constant anisotropy intensity
$\eps$ (no space dependence) in order to better understand and study
the construction of the AP-reformulation. Later on, in section
\ref{GENE} we shall generalize the AP-reformulation to variable $\eps$
cases.

As we intend to use the finite element method for the numerical
solution of the P-problem, let us put (\ref{P}) under variational
form. For this, let us introduce the Hilbert-space
\begin{equation} \label{V}
{\cal V}=\{v\in H^{1}(\Omega )\,,\text{ such that }v|_{\Gamma _{D}}=0\}\,, \quad (u,v)_{\cal V}:= (\nabla _{{||}}u, \nabla _{{||}}v) + (\nabla _{\perp }u, \nabla _{\perp }v)\,,
\end{equation}%
and its subspace
\begin{equation} \label{G}
{\cal G}=\{v\in {\cal V}\,,\text{ such that } \nabla_{||}v=0\}\,, \quad  (u,v)_{\cal G}:=  (\nabla _{\perp }u, \nabla _{\perp }v)\,,
\end{equation}%
where $(\cdot,\cdot)$ shall stand in the following for the standard
$L^2$-scalar product.\\
We are seeking thus for $u^\eps \in {\cal V}$, solution of
\be \label{Pvar}
(P)\,\,\,\quad
a_{||}(u^\eps,v) + \eps a_{\perp}(u^\eps,v)=\eps (f,v)\,, \quad
\forall v \in {\cal V}\,,
\ee
where  the continuous bilinear forms $a_{||} : \cal{V} \times \cal{V} \rightarrow
\RR$ and $a_{\perp}: \cal{V} \times \cal{V} \rightarrow \RR$ are given by
\be \label{bil}
\begin{array}{lll}
  \ds a_{||}(u,v)&:=&\ds \int_{\Omega} A_{||} \nabla_{||}
  u \cdot \nabla_{||}v\, dx\,, \quad  a_{\perp}(u,v):=\ds \int_{\Omega} ( A_{\perp} \nabla_{\perp}
  u) \cdot \nabla_{\perp}v\, dx\,.
\end{array}
\ee

Thanks to Hypothesis A and to the Lax-Milgram theorem, problem
(\ref{Pvar}) admits a unique solution $u^\eps \in {\cal V}$ for all
fixed $\eps >0$.  The parameter $0<\eps<1$ is responsible for the high
anisotropy of the problem, and its smallness induces severe numerical
difficulties. Indeed, putting formally $\eps=0$ in (\ref{Pvar}),
yields $a_{||}(u^0,v)=0$ for all $v \in {\cal V}$, which has
infinitely many solutions, constant along the field lines. We see thus
that the system (\ref{Pvar}) becomes degenerate in the limit $\eps
\rightarrow 0$. It means that solving (\ref{Pvar}) for $0 < \eps <<1$
with standard numerical schemes is very inadequate, since we have to
deal with very ill-conditioned linear systems.

However, as detailed in \cite{DDLNN}, the solution $u^\eps \in {\cal
  V}$ of (\ref{P}) is shown to tend as $\eps \rightarrow 0$ towards a
function $u^0 \in {\cal G}$, constant along the field lines of $b$ and
solution of the Limit model \be \label{LL} (L)\,\,\,\quad
\int_{\Omega} ( A_{\perp} \nabla_{\perp} u^0) \cdot \nabla_{\perp}v\,
dx = \int_{\Omega} f v\, dx \,, \quad \forall v \in {\cal G}\,.  \ee
Again, the Lax-Milgram theorem permits to show the existence and
uniqueness of a solution $u^0 \in {\cal G}$ of this Limit problem
(\ref{LL}).

The aim of the present work, is to introduce an AP-scheme (Asymptotic
Preserving), which shall permit to find numerically the solution of
problem (\ref{P}) (or (\ref{Pvar}) equivalently) in an accurate
manner, independently on $\eps$ and with only moderate requirements
concerning computer resources.  This shall be based on the
introduction of an equivalent reformulation of the P-model, which
shall permit to pass continuously from the reformulated P-model
(\ref{Pvar}) to the L-model (\ref{LL}) as $\eps \rightarrow 0$. In
other words, this reformulation of the P-model will simply be a
regular perturbation of the L-model, avoiding thus the encountered
problems of ill-posedness, when passing to the limit directly in the
singularly perturbed problem (\ref{Pvar}). This procedure leads to
some considerable numerical advantages for $0<\eps<<1$.

\subsection{An AP- reformulation of the problem (MM-problem)}\label{sec:AP}

To introduce a reformulation of the P-model, which shall be well-posed
in the limit $\eps \rightarrow 0$, we need the following Hilbert space
\begin{equation} \label{GL}
{\cal L}=\{q\in L^{2}(\Omega )~/~\nabla _{{||}}q\in
L^{2}(\Omega )\text{ and }q|_{\Gamma _{in}}=0\}\,, \quad (q,w)_{{\cal
    L}}:=(\nabla _{{||}}q, \nabla _{{||}}w)\,, \quad \forall q,w \in
{\cal L}\,. 
\end{equation}
Consider the following problem, called in the sequel
Asymptotic-Preserving problem based on a micro-macro decompostion
(MM-problem): find $(u^\eps,q^\eps)\in {\cal V} \times {\cal L}$,
solution of
\begin{equation}
  (MM)\,\,\, 
  \left\{ 
    \begin{array}{ll}
      \ds \int_{\Omega }( A_{\perp }\nabla _{\perp }u^\eps)\cdot \nabla _{\perp
      }v\,dx+\int_{\Omega } A_{||} \nabla _{||}q^\eps\cdot \nabla _{||}v\,dx=\int_{\Omega }fv\,dx, & \quad
      \forall v\in {\cal V} \\[3mm]
      \ds \int_{\Omega }A_{||} \nabla {}_{||}u^\eps\cdot \nabla {}_{||}w\,dx-\varepsilon \int_{\Omega }
      A_{||} \nabla _{||}q^\eps\cdot \nabla _{||}w\,dx=0, & \quad \forall w\in {\cal L}\,.
    \end{array}%
  \right.  \label{Pa}
\end{equation}
System (\ref{Pa}) is an equivalent reformulation (for fixed $\eps>0$)
of the original P-problem (\ref{Pvar}). Indeed, if $u^\eps\in\mathcal
V$ solves (\ref{Pvar}), then we can construct $q^\eps \in {\cal L}$
such that $\nabla_{||} q^\eps = (1/\eps) \nabla_{||} u^\eps$,
cf. Lemma \ref{lem1} below.  Indeed, for this observe that
$$
q \in {\cal L} \longmapsto \nabla_{||} q \in L^2(\Omega)\,,
$$
is a one to one mapping. This, in weak form, gives the second equation
of (\ref{Pa}).  Replacing then $\nabla_{||} u^\eps$ by $\eps
\nabla_{||} q^\eps$ inside (\ref{Pvar}), we see that $(u^\eps,q^\eps)$
solves also the first equation in (\ref{Pa}).  Conversely, if
(\ref{Pa}) has a solution $(u^\eps,q^\eps)\in {\cal V}\times {\cal L}$
then the second equation implies $\eps \nabla _{||}q^\eps=\nabla
_{||}u^\eps$, which inserted in the first one, leads to the weak
formulation (\ref{Pvar}).

This proves that the MM-formulation (\ref{Pa}) has a unique solution
$(u^\eps,q^\eps)\in {\cal V}\times {\cal L}$ for all $\eps>0$ and
$f\in L^2(\Omega)$, where $u^\eps \in {\cal V}$ is the unique solution
of the P-problem (\ref{Pvar}). The advantage of (\ref{Pa}) over
(\ref{Pvar}) consists in the fact that taking formally the limit $\eps
\to 0$ in (\ref{Pa}) leads to the correct limit problem (\ref{LL}).
Indeed, setting $\eps=0$ in the MM-formulation (\ref{Pa}), we obtain
the following problem (referred hereafter as the L-model): Find
$(u^0,q^0) \in {\cal V} \times {\cal L}$ such that
\begin{equation}
(L)\,\,\, \left\{ 
\begin{array}{ll}
\ds \int_{\Omega }( A_{\perp }\nabla _{\perp }u^0)\cdot \nabla _{\perp
}v\,dx+\int_{\Omega } A_{||} \nabla _{||}q^0\cdot \nabla _{||}v\,dx=\int_{\Omega }fv\,dx, & \quad
\forall v\in {\cal V} \\[3mm]
\ds \int_{\Omega }A_{||} \nabla {}_{||}u^0\cdot \nabla {}_{||}w\,dx=0, & \quad \forall w\in {\cal L}\,.
\end{array}%
\right.  \label{L}
\end{equation}
Remark that (\ref{L}) is formally an equivalent formulation of the
Limit problem (\ref{LL}).  In particular, if $(u^0,q^0) \in {\cal V}
\times {\cal L}$ is a solution of (\ref{L}), then $u^0 \in {\cal G}$,
where ${\cal G}$ is defined in (\ref{G}) and $u^0$ solves (\ref{LL}).
The additional unknown $q^0$ serves here as the Lagrange multiplier
responsible for the constraint $u^0 \in {\cal G}$. The existence of
this Lagrange multiplier $q^0 \in {\cal L}$ is not completely
straight-forward to prove, since we do not have an inf-sup property
for the bilinear form $a_{||}$ on the pair of spaces $\mathcal V
\times \mathcal L$.  Fortunately, we can prove the existence assuming
$f\in L^2(\Omega)$, cf. Theorem \ref{thm_cont}, thus establishing
rigorously the equivalence between (\ref{LL}) and (\ref{L}), at least
for $f\in L^2(\Omega)$. This shall be part of the aim of the next
subsection. The uniqueness is given by
\begin{lemma} \label{lem_unic} Suppose that Hypothesis A is satisfied,
  in particular that $f\in L^{2}(\Omega )$. Then the solution to
  (\ref{L}), if it exists, is unique.
\end{lemma}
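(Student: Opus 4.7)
The plan is to argue by linearity: if $(u_1, q_1)$ and $(u_2, q_2)$ are two solutions of (\ref{L}), then $(u, q) := (u_1-u_2, q_1-q_2) \in \mathcal{V}\times\mathcal{L}$ satisfies the homogeneous version of (\ref{L}), and I must show $(u,q)=(0,0)$. The argument naturally splits into two stages: first deduce $u=0$ from both equations jointly, then deduce $q=0$ from what is left of the first equation once $u$ has been eliminated.

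For the first stage I would invoke the forward-referenced Lemma \ref{lem1}, which (as used in the equivalence discussion after (\ref{Pa})) provides, for any $u\in\mathcal{V}$, a lift $\tilde u \in\mathcal{L}$ with $\nabla_{||}\tilde u = \nabla_{||} u$. Taking $w=\tilde u$ in the homogeneous second equation gives $\int_\Omega A_{||}|\nabla_{||} u|^2\,dx=0$, so by (\ref{hyp1}) we obtain $\nabla_{||} u=0$, i.e.\ $u\in\mathcal{G}$. Then setting $v=u$ in the homogeneous first equation kills the coupling term, leaving $\int_\Omega A_\perp\nabla_\perp u\cdot\nabla_\perp u\,dx=0$; the coercivity (\ref{hyp2}) yields $\nabla_\perp u=0$, hence $\nabla u=0$. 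Combined with $u|_{\Gamma_D}=0$, $\overset{\circ}{\Gamma_D}\neq\varnothing$, and the (tacit) connectedness of $\Omega$, this forces $u=0$.

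With $u=0$, the first equation reduces to
\begin{equation*}
\int_\Omega A_{||}\nabla_{||} q \cdot \nabla_{||} v\,dx=0 \qquad \forall\,v\in\mathcal{V},
\end{equation*}
and the naive wish to test with $v=q$ is blocked by $\mathcal{L}\not\subset\mathcal{V}$: elements of $\mathcal{L}$ need neither lie in $H^1$ nor vanish on $\Gamma_D$. I would instead recast the identity as a transport equation for $F:=A_{||}(b\cdot\nabla q)\in L^2(\Omega)$. Restricting the test to $v\in C_c^\infty(\Omega)$ gives $\nabla\cdot(Fb)=0$ in $\mathcal{D}'(\Omega)$, which by the standard normal-trace theorem for $H(\mathrm{div})$ fields lets one interpret $F(b\cdot n)$ in $H^{-1/2}(\partial\Omega)$. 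Enlarging the test class to all $v\in\mathcal{V}$ and integrating by parts, only boundary terms on $\Gamma_N$ survive (since $b\cdot n=0$ on $\Gamma_D$); as the trace of $v$ is arbitrary on $\Gamma_N$, this forces $F(b\cdot n)=0$ on $\Gamma_N$, so in particular $F=0$ on $\Gamma_{in}$. Solving the transport equation along characteristics of $b$ from the inflow boundary then yields $F\equiv 0$, so $\nabla_{||} q=0$; finally the injectivity of $\nabla_{||}$ on $\mathcal{L}$ (Lemma \ref{lem1}) gives $q=0$.

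The main obstacle is precisely this second stage: the mismatch between $\mathcal{V}$ and $\mathcal{L}$ prevents a clean Lax--Milgram-style energy identity, so the argument must pass through a distributional transport equation, a careful normal-trace analysis, and the (geometric) hypothesis that every streamline of $b$ eventually meets $\Gamma_{in}$. Everything else — the first stage and the final step — is routine given Lemma \ref{lem1} and Hypothesis A.
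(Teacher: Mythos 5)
Your proof is correct and, in its first half, follows the paper's route: reduce to $f=0$ by linearity, use Lemma \ref{lem1} (with $\eps=1$) to lift $u$ into $\mathcal{L}$ and conclude $\nabla_{||}u=0$ from the second equation, then test the first equation with $v=u\in\mathcal{G}$ and use the coercivity of $a_\perp$ together with the Dirichlet condition on $\Gamma_D$ to get $u=0$. (The paper compresses all of this into ``taking test functions $v\in\mathcal{G}$ we get immediately $u^{0}=0$''; you make the hidden appeal to Lemma \ref{lem1} explicit, which is a genuine improvement in readability.) Where you diverge is the final step $q=0$. The paper disposes of $a_{||}(q,v)=0$ for all $v\in\mathcal{V}$ ``by density arguments'': the point is that $\{b\cdot\nabla v:\ v\in\mathcal{V}\}$ is dense in $L^{2}(\Omega)$ --- immediate in the $\xi$-coordinates of Hypothesis B, where $b\cdot\nabla v$ is essentially $\partial v/\partial\xi_{n}$ and one can antidifferentiate any $g\in C_c^\infty$ from $\xi_n=0$ --- so the $L^{2}$-function $A_{||}(b\cdot\nabla q)$ is orthogonal to a dense set and must vanish. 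You instead derive the distributional transport equation $\nabla\cdot(Fb)=0$ for $F=A_{||}(b\cdot\nabla q)$, extract the vanishing normal trace on $\Gamma_{in}$, and integrate along characteristics. Both routes are valid and both rest on the same geometric input (Hypothesis B: every field line runs from $\Gamma_{in}$ to $\Gamma_{out}$; neither your argument nor the paper's truly gets by on Hypothesis A alone, despite the lemma's wording). The density route is much shorter; your transport route invokes heavier machinery ($H^{-1/2}$ normal traces, restriction to a boundary portion, a.e.\ characteristics for an $L^{2}$ field) but makes the underlying mechanism --- uniqueness for a transport problem with zero inflow data --- completely explicit, and your remark that one cannot simply test with $v=q$ because $\mathcal{L}\not\subset\mathcal{V}$ correctly identifies why some such argument is unavoidable.
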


\begin{proof}
  It is sufficient, due to linearity, to consider $f=0$. Let thus
  $(u^{0},q^{0})\in \mathcal{V}\times \mathcal{L}$ be the solution of
  (\ref{L}) for $f=0$. Taking then test functions $v \in {\cal G}$, we
  get immediately $u^{0}=0$, implying $%
  a_{||}(q^{0},v)=0$ for all $v\in \mathcal{V}$. By density arguments
  one gets then $q^{0}=0$.
\end{proof}

\begin{remark}
  In the previous paper \cite{DDLNN} we used the decomposition $u^\eps
  = p^\eps + q^\eps$ for the construction of the duality-based
  AP-scheme, where $p \in {\cal G}$ and $q \in {\cal A}$, with ${\cal
    A}$ the space of functions with zero average along the field
  lines. In the present paper we have a slightly different
  decomposition $u^\eps = p^\eps + \eps q^\eps$ hidden in
  (\ref{Pvar}). The space $\cal A$ was replaced by the space $\cal L$
  of functions vanishing on the inflow boundary.  In the previous
  version, $q^{\eps}$ was useless in the limit $\eps \to 0$, now it is
  a meaningfull Lagrange multiplier in this limit. Indeed, we prove in
  Theorem \ref{thm_cont} below that $q^\eps \to q^0$ as $\eps \to
  0$. This is the main reason why our new Micro-Macro AP-reformulation
  is more economical than the old Duality-based AP-reformulation.
\end{remark}

\subsection{The behaviour of the MM-problem as $\eps \rightarrow 0$} \label{SEC23}
The objective of this subsection is to study the behaviour of the
MM-solution $(u^\eps,q^\eps) \in {\cal V} \times {\cal L}$ of system
(\ref{Pa}) in the limit $\eps \rightarrow 0$, in particular to show
that it tends (in some sense) towards $(u^0,q^0) \in {\cal V} \times
{\cal L}$, solution of the Limit model (\ref{L}).  In order to prove
rigorously these results, we will suppose the following hypothesis on
the domain that tells us essentially that $\Omega $ is a tube composed
of field
lines of $b$.\\

\noindent {\bf Hypothesis B} {\it There exists a smooth coordinate
  system $(\xi _{1},\ldots ,\xi _{n})$ on $\Omega $ with $(\xi
  _{1},\ldots ,\xi _{n-1})\in D$, $\xi _{n}\in (0,1)$, $D$ being a
  smooth domain in $\mathbb{R}^{n-1}$, such that the field lines of
  $b$ are given by the coordinate lines $(\xi _{1},\ldots ,\xi
  _{n-1})=const$. Moreover, $\Gamma _{in}$ is represented by $\xi
  _{n}=0$, $%
  (\xi _{1},\ldots ,\xi _{n-1})\in D$; $\Gamma _{out}$ is represented
  by $\xi _{n}=1$, $(\xi _{1},\ldots ,\xi _{n-1})\in D$ and $\Gamma
  _{D}$ is represented by $\xi _{n}\in (0,1)$, $(\xi _{1},\ldots ,\xi
  _{n-1})\in
\partial D$.}

We proved in \cite{DDLNN} that such a coordinate system exists
provided \ the components of the boundaries $\Gamma _{in}$ and $\Gamma
_{out}$ admit smooth parametrizations and that $b$ penetrates $\Gamma
_{in}$ and $\Gamma _{out}$ at an angle that stays away from 0$.$

In the following we suppose that Hypotheses A and B hold true. We are then
able to prove the following 
\begin{theorem} \label{thm_cont} Let Hypothesis A and B be statisfied
  and moreover suppose that $A_{\perp }\in \mathbb{M}_{d\times
    d}(W^{2,\infty }(\Omega ))$ and $A_{||}\in W^{2,\infty }(\Omega
  )$. Then the MM-problem (\ref{Pa}) admits a unique solution
  $(u^{\eps},q^{%
    \eps})\in \mathcal{V}\times \mathcal{L}$ for any $\eps>0$, where
  $u^\eps$ is the unique solution of problem (\ref{Pvar}). There
  exists also a unique solution $(u^{0},q^{0})\in \mathcal{V}\times
  \mathcal{L}$ of the L-problem (\ref{L}), where $u^{0}\in
  \mathcal{G}$ solves problem (\ref{LL}). Moreover, we have the
  following convergences as $\varepsilon \rightarrow 0$%
  \begin{equation*}
    u^{\eps}\rightarrow u^{0}\ \quad \text{in}\quad \mathcal{V}\,,\quad \quad q^{%
      \eps}\rightharpoonup q^{0}\quad \text{in}\quad \mathcal{L}\,,
  \end{equation*}%
  and the following bounds hold  
  \begin{equation*}
    ||\nabla _{\perp }u^{\eps%
    }-\nabla _{\perp }u^{0}||_{L^{2}}\leq
    C\sqrt{\eps}||f||_{L^{2}}\,,\quad ||\nabla
    _{||}u^{\eps}||_{L^{2}}\leq C\eps\,||f||_{L^{2}} \,\,\,
    \text{and}\,\,\,\,\,||\nabla _{||}q^{%
      \eps}||_{L^{2}}\leq C||f||_{L^{2}}\,.
  \end{equation*}%
  with a constant $C>0$ independent of $\eps$ and $f$.
\end{theorem}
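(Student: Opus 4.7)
The plan is to prove existence and uniqueness for the MM- and L-problems first, and then derive uniform bounds and convergence rates in two successive energy steps: a basic estimate on $(u^\eps,q^\eps)$ alone, and a sharper estimate obtained by comparing the MM-system with the L-system. Existence and uniqueness of the MM-solution for each $\eps>0$ is essentially already contained in the equivalence argument preceding the theorem: Lax--Milgram applied to (\ref{Pvar}) supplies $u^\eps\in\mathcal V$, and the map $q\in\mathcal L\mapsto\nabla_\parallel q\in L^2(\Omega)$ being one-to-one (as noted in the text) gives a unique $q^\eps\in\mathcal L$ with $\nabla_\parallel q^\eps=\eps^{-1}\nabla_\parallel u^\eps$. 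Uniqueness for the L-problem is Lemma \ref{lem_unic}, and Lax--Milgram on $\mathcal G$ supplies a unique $u^0\in\mathcal G$ solving (\ref{LL}). The delicate step is constructing $q^0\in\mathcal L$: I would use Hypothesis B to recast the equation $-\nabla_\parallel\cdot(A_\parallel\nabla_\parallel q^0)=f+\nabla_\perp\cdot(A_\perp\nabla_\perp u^0)$, together with $q^0=0$ on $\Gamma_{in}$ and a natural Neumann condition on $\Gamma_{out}$, as a family of 1D two-point boundary value problems indexed by $(\xi_1,\ldots,\xi_{n-1})\in D$. Each such problem is explicitly solvable, and the $W^{2,\infty}$-regularity of $A_\parallel, A_\perp$ combined with the smoothness of the coordinate system from Hypothesis B allows standard elliptic regularity on $D$ to promote $u^0$ to a smoothness sufficient that the right-hand side is $L^2$ fiberwise; integrating the explicit representation then yields $q^0\in\mathcal L$ with $\|\nabla_\parallel q^0\|_{L^2}\le C\|f\|_{L^2}$.

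For the basic estimate, I would test the first equation of (\ref{Pa}) with $v=u^\eps$ and the second with $w=q^\eps$, and then subtract; the cross terms $\int A_\parallel\nabla_\parallel u^\eps\cdot\nabla_\parallel q^\eps$ cancel, leaving
\begin{equation*}
\int_\Omega A_\perp |\nabla_\perp u^\eps|^2\,dx + \eps\int_\Omega A_\parallel|\nabla_\parallel q^\eps|^2\,dx = \int_\Omega f\,u^\eps\,dx.
\end{equation*}
Combining this with the Poincaré inequality on $\mathcal V$ (available since $\overset{\circ}{\Gamma_D}\neq\varnothing$) and the identity $\nabla_\parallel u^\eps=\eps\nabla_\parallel q^\eps$ built into the construction of $q^\eps$, I obtain the preliminary bounds $\|\nabla_\perp u^\eps\|_{L^2}+\sqrt{\eps}\,\|\nabla_\parallel q^\eps\|_{L^2}\le C\|f\|_{L^2}$.

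For the sharper bounds, I would subtract (\ref{L}) from (\ref{Pa}) and test the first difference equation with $v=u^\eps-u^0\in\mathcal V$. Using $\nabla_\parallel u^0=0$ and $\nabla_\parallel u^\eps=\eps\nabla_\parallel q^\eps$, this reduces to
\begin{equation*}
\int_\Omega A_\perp|\nabla_\perp(u^\eps-u^0)|^2\,dx+\eps\int_\Omega A_\parallel|\nabla_\parallel q^\eps|^2\,dx=\eps\int_\Omega A_\parallel\nabla_\parallel q^0\cdot\nabla_\parallel q^\eps\,dx.
\end{equation*}
A Young inequality absorbs the $\nabla_\parallel q^\eps$ factor on the right and yields the uniform bound $\|\nabla_\parallel q^\eps\|_{L^2}\le C\|\nabla_\parallel q^0\|_{L^2}\le C\|f\|_{L^2}$, hence $\|\nabla_\parallel u^\eps\|_{L^2}\le C\eps\|f\|_{L^2}$ as well as $\|\nabla_\perp(u^\eps-u^0)\|_{L^2}\le C\sqrt{\eps}\,\|f\|_{L^2}$, which together deliver strong convergence $u^\eps\to u^0$ in $\mathcal V$. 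The uniform bound on $\nabla_\parallel q^\eps$ then lets me extract a weakly convergent subsequence $q^\eps\rightharpoonup\tilde q$ in $\mathcal L$; passing to the limit in the two MM equations shows that $(u^0,\tilde q)$ solves (\ref{L}), so $\tilde q=q^0$ by uniqueness, and the whole family converges weakly. The main obstacle throughout is the construction of $q^0$: in the absence of an inf-sup condition for $a_\parallel$ on $\mathcal V\times\mathcal L$, one must actively exploit the tubular structure provided by Hypothesis B, which is also what forces the $W^{2,\infty}$-regularity hypothesis on the coefficients.
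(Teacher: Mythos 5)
Your energy identities are correct and the endgame (weak compactness for $q^{\eps}$, uniqueness via Lemma \ref{lem_unic}) coincides with the paper's, but at the central step you take a genuinely different route. The paper never constructs $q^{0}$ in advance: it tests the first equation of (\ref{Pa}) with the cleverly chosen $v=u^{\eps}-\eps q^{\eps}-u^{0}\in\mathcal{G}$ and then with $v=\eps q^{\eps}$ (which requires knowing $q^{\eps}\in\mathcal{V}$, i.e.\ Lemma \ref{lem1}), arriving at the identity (\ref{aga2}); it then closes the estimate by integrating $a_{\perp}(u^{0},q^{\eps})$ by parts using the $H^{2}$-regularity of $u^{0}$ from Lemma \ref{lem2} and by controlling $\|q^{\eps}\|_{L^{2}(\Omega)}$ and $\|q^{\eps}\|_{L^{2}(\Gamma_{out})}$ through Poincar\'e and trace inequalities in the $\xi$-coordinates. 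The existence of $q^{0}$ is then a free by-product of the uniform bound on $\nabla_{||}q^{\eps}$. You instead front-load all the difficulty into a direct a priori construction of $q^{0}\in\mathcal{L}$ with $\|\nabla_{||}q^{0}\|_{L^{2}}\le C\|f\|_{L^{2}}$, after which your comparison of (\ref{Pa}) and (\ref{L}) tested with $u^{\eps}-u^{0}$ gives all three bounds in one line via Young's inequality. That is arguably a cleaner energy argument, and it has the merit of exhibiting $q^{0}$ explicitly; both routes rely equally on Lemma \ref{lem2} and on the tube structure of Hypothesis B.

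The one step that does not yet hold up as written is your construction of $q^{0}$. The first equation of (\ref{L}) is posed against all $v\in\mathcal{V}$, and such $v$ are free on $\Gamma_{in}$ as well as on $\Gamma_{out}$; its strong form therefore carries natural boundary conditions of the type $A_{||}(b\cdot\nabla q^{0})(b\cdot n)+\bigl[(Id-b\otimes b)A_{\perp}\nabla_{\perp}u^{0}\bigr]\cdot n=0$ on \emph{both} ends of each field line. The fiberwise problems are thus second-order ODEs on $(0,1)$ with Neumann data at both endpoints, solvable only under a fiberwise compatibility condition, with the constraint $q^{0}|_{\Gamma_{in}}=0$ serving merely to fix the residual additive constant. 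The Dirichlet-at-inflow/Neumann-at-outflow problem you propose is always solvable, but the resulting function satisfies the variational equation only against test functions vanishing on $\Gamma_{in}$, not against all of $\mathcal{V}$. The repair is that the compatibility condition holds exactly because $u^{0}$ solves (\ref{LL}): testing the first equation of (\ref{L}) with $v\in\mathcal{G}$ (functions constant along field lines) reproduces the fiberwise solvability condition after integration over $\xi_{n}$. Once this is invoked, your construction, the bound $\|\nabla_{||}q^{0}\|_{L^{2}}\le C\|f\|_{L^{2}}$, and hence the whole proof go through.
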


The proof of this theorem will use several lemmas which we prove first.

\begin{lemma}\label{lem1}
  For any $u\in H^{1}(\Omega )$ and $\varepsilon >0$, there exists a
  unique $%
  q\in \mathcal{L}$ satisfying $\varepsilon \nabla _{||}q=\nabla
  _{||}u$ a.e.  Moreover, if $u\in H^{2}(\Omega )$ then $q\in
  H^{1}(\Omega )$ and if $u\in \mathcal{V}\cap H^{2}(\Omega )$ then
  $q\in \mathcal{V}$.
\end{lemma}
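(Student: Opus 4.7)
My approach is to exploit Hypothesis B, which straightens the field lines of $b$ in adapted coordinates $(\xi',\xi_n) := (\xi_1,\ldots,\xi_{n-1},\xi_n) \in D \times (0,1)$. In these coordinates, $\Gamma_{in} = \{\xi_n=0\}$, and $\partial_{\xi_n}$ is parallel to $b$, so $b\cdot\nabla$ equals $\partial_{\xi_n}$ times a strictly positive smooth scale factor. The equation $\varepsilon\nabla_{||}q = \nabla_{||}u$ thus reduces, along each field line $\{\xi'=\mathrm{const}\}$, to the ODE $\varepsilon\,\partial_{\xi_n}q = \partial_{\xi_n}u$ with inflow condition $q(\xi',0)=0$. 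The natural candidate is therefore
\[
q(\xi',\xi_n) := \frac{1}{\varepsilon}\int_0^{\xi_n} \partial_\tau u(\xi',\tau)\,d\tau,
\]
which is well defined for a.e.\ $\xi'$ by Fubini, since $\partial_{\xi_n}u \in L^2(\Omega)$ for $u\in H^1(\Omega)$.

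Next I would verify that this $q$ lies in $\mathcal{L}$. The Jacobian of the smooth coordinate change is bounded above and below on the bounded domain, so Cauchy--Schwarz in $\tau$ followed by integration gives $\|q\|_{L^2(\Omega)} \le C\varepsilon^{-1}\|\nabla_{||}u\|_{L^2(\Omega)}$, while the defining equation itself yields $\nabla_{||}q = \varepsilon^{-1}\nabla_{||}u \in L^2(\Omega)$. The boundary condition $q|_{\Gamma_{in}}=0$ follows by a density argument: approximate $u$ in $H^1(\Omega)$ by smooth functions $u_k$, observe that the corresponding $q_k$ vanish pointwise at $\xi_n=0$, and pass to the limit using the continuity of the trace map for functions with $L^2$ transport derivative (Cessenat-type trace). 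Uniqueness is straightforward: the difference of two solutions lies in $\mathcal{L}$, has zero parallel gradient, hence is constant along each field line and zero at $\xi_n=0$, so it vanishes identically.

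For the higher regularity claims, I would differentiate the defining formula in the transverse directions:
\[
\partial_{\xi_i}q(\xi',\xi_n) = \frac{1}{\varepsilon}\int_0^{\xi_n} \partial_{\xi_i}\partial_\tau u(\xi',\tau)\,d\tau,\qquad i=1,\ldots,n-1.
\]
For $u \in H^2(\Omega)$ the integrand is in $L^2$, so Cauchy--Schwarz again gives $\partial_{\xi_i}q \in L^2$ and hence $q \in H^1(\Omega)$ (the change of variables being a smooth diffeomorphism). If additionally $u\in\mathcal{V}$, so that $u|_{\Gamma_D}=0$ on the cylindrical surface $\{\xi'\in\partial D\}$, then the tangential derivative $\partial_{\xi_n}u$ vanishes along $\Gamma_D$, so the integrand in the definition of $q$ vanishes on $\partial D$, giving $q|_{\Gamma_D}=0$ and therefore $q\in\mathcal{V}$.

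The only non-routine point I anticipate is the rigorous justification of the trace $q|_{\Gamma_{in}}=0$ at the minimal $H^1$ regularity level for $u$, which requires invoking trace theory for the transport operator $b\cdot\nabla$ (or an equivalent smooth-approximation plus continuity argument); everything else is a one-dimensional integration estimate combined with Fubini in the straightened coordinates.
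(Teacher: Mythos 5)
Your construction is identical to the paper's: writing $q(\xi',\xi_n)=\frac{1}{\varepsilon}\int_0^{\xi_n}\partial_\tau u(\xi',\tau)\,d\tau$ is exactly the paper's $q=(u-J(u|_{\Gamma_{in}}))/\varepsilon$, where $J$ extends the inflow trace as a constant along each field line, and your regularity and boundary-condition checks in the straightened coordinates of Hypothesis B mirror the paper's observations that $J$ maps $L^2(\Gamma_{in})\to L^2(\Omega)$, $H^1(\Gamma_{in})\to H^1(\Omega)$ and $H^1_0(\Gamma_{in})\to\mathcal{V}$. The proof is correct and follows essentially the same route, with your added care about the trace on $\Gamma_{in}$ and uniqueness being welcome but not a different method.
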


\begin{proof}
  The idea of the proof is simply to say that $q$ is constructed by
  subtracting from $u$ its value on the inflow $\Gamma _{in}$ on each
  field line and dividing the result by $\varepsilon $. More formally
  speaking, let us introduce the operator $J$ that takes any function
  $\phi $ on $\Gamma _{in}$ and returns the function $p=J\phi $, which
  coincides with $\phi $ on $%
  \Gamma _{in}$ and is constant on the field lines. In the coordinate
  system introduced in Hypothesis B, the definition of the operator
  $J$ resumes to the following formula%
  \begin{equation*}
    (J\phi )(\xi _{1},\ldots ,\xi _{n-1},\xi _{n})=\phi (\xi _{1},\ldots ,\xi
    _{n-1}).
  \end{equation*}%
  This implies (going from the coordinates $\xi $ to the original
  ones) that $J $ maps $L^{2}(\Gamma _{in})$ to $L^{2}(\Omega )$,
  $H^{1}(\Gamma _{in})$ to $ H^{1}(\Omega )$ and $H_{0}^{1}(\Gamma
  _{in})$ to $\mathcal{V}$. Let us now define, for given $u$, the
  function $q=(u-J(u|_{\Gamma _{in}}))/\varepsilon $. Consequently, if
  $u\in H^{1}(\Omega )$ then $u|_{\Gamma _{in}}\in L^{2}(\Gamma
  _{in})$ and $q\in L^{2}(\Omega )$. If $u\in H^{2}(\Omega )$ then
  $u|_{\Gamma _{in}}\in H^{1}(\Gamma _{in})$ and $q\in H^{1}(\Omega
  )$. If, moreover, $u$ vanishes on $\Gamma _{D}$ then $u|_{\Gamma
    _{in}}\in H_{0}^{1}(\Gamma _{in})$ and $%
  q\in \mathcal{V}$.
\end{proof}

\begin{lemma}\label{lem2}
  Let Hypothesis A and B be statisfied and moreover suppose that
  $A_{\perp }\in \mathbb{M}_{d\times d}(W^{2,\infty }(\Omega ))$ and
  $A_{||}\in W^{2,\infty }(\Omega )$. Then the solution $u^{0}\in
  \mathcal{G}$ of (%
  \ref{LL}) belongs to $H^{2}(\Omega )$ and satisfies the estimates
  \begin{equation}
    ||u^{0}||_{H^{2}}\leq C||f||_{L^{2}}\,,  \label{estu0}
  \end{equation}%
  with a constant $C$ independent of $f$.
\end{lemma}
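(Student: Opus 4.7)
The plan is to reduce the limit problem (\ref{LL}) to a standard elliptic boundary value problem on the transverse cross-section $D\subset \RR^{n-1}$ given by Hypothesis B, apply classical $H^2$ regularity there, and transport the regularity back to $\Omega$ via the smooth coordinate map.

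First I would exploit the fact that $u^0\in\mathcal G$ is constant along the field lines of $b$. In the coordinates $(\xi',\xi_n)$ with $\xi'=(\xi_1,\ldots,\xi_{n-1})\in D$ and $\xi_n\in(0,1)$ this means $u^0(\xi',\xi_n)=U^0(\xi')$, and likewise any test function $v\in\mathcal G$ equals $V(\xi')$ with $V\in H_0^1(D)$ (the Dirichlet condition on $\Gamma_D$ transfers to a Dirichlet condition on $\partial D$). Plugging $u=U^0(\xi')$ and $v=V(\xi')$ into (\ref{LL}), using the change of variables $x\mapsto\xi$ (whose Jacobian $J_\Omega(\xi)$ is smooth and bounded above and below by Hypothesis B and the smoothness of $b$), and integrating out $\xi_n$, one obtains a variational problem on $D$ of the form
\begin{equation*}
\int_D \widetilde A(\xi')\,\nabla_{\xi'}U^0\cdot\nabla_{\xi'}V\,d\xi' = \int_D \widetilde f(\xi')\,V(\xi')\,d\xi'\,,\qquad\forall V\in H_0^1(D),
\end{equation*}
where $\widetilde A(\xi')=\int_0^1 M(\xi',\xi_n)\,d\xi_n$ with $M$ obtained from $A_\perp$ and the Jacobian of the map, and $\widetilde f(\xi')=\int_0^1 f(\xi',\xi_n)J_\Omega(\xi',\xi_n)\,d\xi_n$.

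Next I would check that this reduced problem satisfies the hypotheses of standard $H^2$ elliptic regularity. Cauchy--Schwarz gives $\widetilde f\in L^2(D)$ with $\|\widetilde f\|_{L^2(D)}\leq C\|f\|_{L^2(\Omega)}$. The regularity $A_\perp\in\MM_{d\times d}(W^{2,\infty}(\Omega))$ and smoothness of the coordinate change transfer to $\widetilde A\in\MM_{(n-1)\times(n-1)}(W^{2,\infty}(D))$, certainly Lipschitz. The delicate point is uniform ellipticity of $\widetilde A$: this follows from the fact that, for $V$ depending only on $\xi'$, the gradient $\nabla V$ in the original coordinates is automatically perpendicular to $b$ (since $b\cdot\nabla V=0$), so the coercivity of $A_\perp$ on the $b$-perpendicular subspace from Hypothesis A translates to coercivity of $\widetilde A$ on all of $\RR^{n-1}$. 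Since $D$ is a smooth bounded domain, the classical theorem on $H^2$ regularity for divergence-form elliptic equations with Lipschitz coefficients and $L^2$ data on smooth domains with Dirichlet boundary conditions gives $U^0\in H^2(D)$ with $\|U^0\|_{H^2(D)}\leq C\|\widetilde f\|_{L^2(D)}\leq C\|f\|_{L^2(\Omega)}$.

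Finally, to conclude I would pull $U^0$ back to $\Omega$: since $u^0(\xi',\xi_n)=U^0(\xi')$ and the change of coordinates between $\xi$ and $x$ is a $C^\infty$-diffeomorphism on $\bar\Omega$, second derivatives of $u^0$ in $x$ are bounded by first and second derivatives of $U^0$ in $\xi'$, which yields $u^0\in H^2(\Omega)$ together with the estimate $\|u^0\|_{H^2(\Omega)}\leq C\|f\|_{L^2(\Omega)}$.

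The main obstacle I anticipate is making the reduction clean: one must carefully justify that testing against the full subspace $\mathcal G$ is equivalent, after integration in $\xi_n$, to testing against all of $H_0^1(D)$, and that the resulting matrix $\widetilde A$ inherits the coercivity of $A_\perp$ in the right sense. Everything else is routine provided the coordinate map from Hypothesis B is smooth enough, which is the case since $b\in (C^\infty(\bar\Omega))^d$.
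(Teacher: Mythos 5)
Your proposal is correct and follows essentially the same route as the paper's own proof: both reduce (\ref{LL}) to a divergence-form elliptic problem on the cross-section $D$ by writing $u^0$ and the test functions as functions of $\xi'=(\xi_1,\dots,\xi_{n-1})$ alone, integrating out $\xi_n$, invoking classical $H^2$ regularity on $D$, and pulling back through the smooth coordinate map. Your write-up merely makes explicit two points the paper leaves implicit, namely the $\xi_n$-averaging in the definition of the reduced coefficient matrix and the argument that its uniform ellipticity follows from the coercivity of $A_\perp$ on the $b$-orthogonal subspace.
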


\begin{proof}
  Since $u^{0}$ is constant along the field lines, it is represented
  by $%
  u^{0}(\xi _{1},\ldots ,\xi _{n-1})$ in the notations of Hypothesis
  B. The same is true for the test functions $v$ in
  (\ref{LL}). Rewriting (\ref{LL}) in the $\xi $-coordinates gives
  \begin{equation*}
    \sum_{1\leq i,j\leq n}\sum_{1\leq k,l\leq n-1}\int_{D\times (0,1)}A_{\perp
      ,ij}\frac{\partial \xi _{k}}{\partial x_{i}}\frac{\partial \xi _{l}}{%
      \partial x_{j}}\frac{\partial u^{0}}{\partial \xi _{k}}\frac{\partial v}{%
      \partial \xi _{l}}\left\vert \frac{\partial x}{\partial \xi }\right\vert
    d\xi =\int_{D\times (0,1)}fv\left\vert \frac{\partial x}{\partial \xi }%
    \right\vert d\xi 
  \end{equation*}%
  Introducing the positive definite matrix $C_{kl}=\sum_{1\leq i,j\leq
    n}A_{\perp ,ij}\frac{\partial \xi _{k}}{\partial
    x_{i}}\frac{\partial \xi _{l}}{\partial x_{j}}\left\vert
    \frac{\partial x}{\partial \xi }\right\vert $ and integrating on
  $\xi _{n}$ over $(0,1)$ yields%
  \begin{equation*}
    \sum_{1\leq k,l\leq n-1}\int_{D}C_{kl}\frac{\partial u^{0}}{\partial \xi _{k}%
    }\frac{\partial v}{\partial \xi _{l}}d\xi _{1}\cdots d\xi
    _{n-1}=\int_{D\times (0,1)}fv\left\vert \frac{\partial x}{\partial \xi }%
    \right\vert d\xi .
  \end{equation*}%
  We see that it is the weak formulation of an elliptic equation for
  $u^{0}$ on $D$ with the boundary conditions $u^{0}=0$ on $\partial
  D$. By regularity results for elliptic equations, we see immediately
  that $u^{0}\in H^{2}(D)$ if $f\in L^{2}(\Omega )$ with continuous
  dependence of $u^{0}$ on $f$.  Reminding that $u^{0}$ does not
  depend on $\xi _{n}$ and going back to the original coordinates
  yields (\ref{estu0}).
\end{proof}

\medskip 
\begin{proof}[Proof of Theorem \ref{thm_cont}]
  The existence and uniqueness of a solution of the MM-problem
  (\ref{Pa}) was shown in the previous section. Note that
  $u^{\varepsilon }$ is in fact in $H^{2}(\Omega )$ by the regularity
  results for elliptic equations since $u^{\varepsilon }$ solves
  (\ref{P}). Moreover, $q^{\varepsilon }\in \mathcal{V}$ thanks to
  Lemma \ref{lem1}.

  Taking in the first equation of (\ref%
  {Pa}) test functions $v \in \mathcal{G}$ , we deduce%
  \begin{equation*}
    a_{\perp }(u^{\eps},v)=(f,v)=a_{\perp }(u^{0},v),\quad \forall v\in \mathcal{%
      G}\,.
  \end{equation*}%
  Choose now $v:=u^{\eps}-\varepsilon q^{\varepsilon }-u^{0}$ where
  $u^{0}\in \mathcal{G}$ is the unique solution of (\ref{LL}). Thanks
  to the second eqaution in (\ref{Pa}), $\nabla _{||}v=0$ a.e. in
  $\Omega $ so that $v\in \mathcal{G}$. Substituting it in the
  equation above gives%
  \begin{equation*}
    a_{\perp }(u^{\eps},u^{\eps}-\varepsilon q^{\varepsilon }-u^{0})=a_{\perp
    }(u^{0},u^{\eps}-\varepsilon q^{\varepsilon }-u^{0})\,,
  \end{equation*}%
  implying
  \begin{equation}
    a_{\perp }(u^{\eps}-u^{0},u^{\eps}-u^{0})-\varepsilon a_{\perp }(u^{\eps%
    },q^{\varepsilon })=-\varepsilon a_{\perp }(u^{0},q^{\varepsilon }).
    \label{aga1}
  \end{equation}%

  Take now $v:=\varepsilon q^{\varepsilon }$ in the first equation of
  (\ref{Pa}) and add it to (\ref{aga1}). This yields
  \begin{equation}
    a_{\perp }(u^{\eps}-u^{0},u^{\eps}-u^{0})+\varepsilon a_{||}(q^{\varepsilon
    },q^{\varepsilon })=\varepsilon (f,q^{\varepsilon })-\varepsilon a_{\perp
    }(u^{0},q^{\varepsilon }).  \label{aga2}
  \end{equation}
  Since $u^{0}\in H^{2}(\Omega )$ by Lemma \ref{lem2} we can integrate
  by parts in $a_{\perp }(u^{0},q^{\eps})$:
  \begin{align*}
    -a_{\perp }(u^{0},q^{\eps})& =-\int_{\Omega } \left(A_{\perp
      }\nabla _{\perp }u^{0} \right)\cdot \nabla _{\perp
    }q^{\eps}dx=-\int_{\Omega }\left[ (Id-b\otimes b)A_{\perp
      }\nabla u^{0} \right]\cdot \nabla q^{\eps}dx \\
    & =-\int_{\Gamma _{out}}\left[ (Id-b\otimes b)A_{\perp }\nabla
      u^{0}\right] \cdot nq^{\eps%
    }d\sigma +\int_{\Omega }\left[\nabla \cdot ((Id-b \otimes
      b)A_{\perp }\nabla u^{0}) \right] q^{%
      \eps}dx \\
    & \leq C||u^{0}||_{H^{2}(\Omega)}\left( ||q^{\eps}||_{L^{2}(\Gamma
        _{out})}+||q^{\eps%
      }||_{L^{2}(\Omega )}\right)
  \end{align*}%
  since $\nabla u^{0}$ has a trace on $\Gamma _{out}$ and its norm in
  $L^{2}(\Gamma _{out})$ is bounded by
  $C||u^{0}||_{H^{2}(\Omega)}$. Thus, (\ref{aga2}) tells us
  \begin{equation*}
    ||\nabla _{||}q^{\eps}||_{L^{2}(\Omega)}^{2}\leq Ca_{||}(q^{\eps},q^{\eps})\leq
    C||f||_{L^{2}}||q^{\eps}||_{L^{2}(\Omega )}+C||u^{0}||_{H^{2}(\Omega)}\left( ||q^{%
        \eps}||_{L^{2}(\Gamma _{N})}+||q^{\eps}||_{L^{2}(\Omega )}\right) .
  \end{equation*}%
  By the Poincar\'{e} and trace inequalities (proved easily by passing
  to the $\xi $-coordinates of Hypothesis B) and by Lemma \ref{lem2},
  we have
  \begin{equation*}
    ||q^{\eps}||_{L^{2}(\Omega )}\leq C||\nabla _{||}q^{\eps}||_{L^{2}(\Omega)},\quad
    ||q^{\eps}||_{L^{2}(\Gamma _{out})}\leq C||\nabla _{||}q^{\eps}||_{L^{2}(\Omega)},%
    \text{ and }||u^{0}||_{H^{2}(\Omega)}\leq C||f||_{L^{2}(\Omega)}
  \end{equation*}%
  so that 
  \begin{equation*}
    ||\nabla _{||}q^{\eps}||_{L^{2}(\Omega)}\leq C||f||_{L^{2}(\Omega)}.
  \end{equation*}%
  This gives immediately also the estimate $||\nabla _{||}u^{\eps
  }||_{L^{2}}\leq C\varepsilon ||f||_{L^{2}}$ since $\nabla
  _{||}u^{\eps }=\varepsilon \nabla _{||}q^{\eps}$. Returning to
  (\ref{aga2}), we observe that
  \begin{equation*}
    ||\nabla _{\perp }u^{\eps}-\nabla _{\perp }u^{0}||_{L^{2}}^{2}\leq
    \varepsilon
    (f,q^{\varepsilon })-\varepsilon a_{\perp }(u^{0},q^{\varepsilon })\leq
    C\varepsilon ||f||_{L^{2}(\Omega)}^{2}\,,
  \end{equation*}%
  so that all the estimates are proved. They imply immediately the
  strong convergence $u^{\varepsilon }\rightarrow u^{0}$ in
  $H^{1}(\Omega )$.

  It remains to prove the weak convergence of $q^{\varepsilon }$ and
  existence of $q^{0}$ that solves (\ref{L}). Since the familly of
  functions $q^{\varepsilon }$ is bounded in $\mathcal{L}$, there is a
  subsequence $q^{\varepsilon _{n}}$ weakly converging to some
  $q^{0}\in \mathcal{L}$ as $\varepsilon _{n}\rightarrow 0$. Taking
  the limit $\varepsilon _{n}\rightarrow 0$ in (\ref{Pa}), we see that
  $(u^{0},q^{0})$ solves (\ref{L}). We know already that the solution
  $(u^{0},q^{0})$ is unique, cf. Lemma \ref{lem_unic}. It means that
  any converging sequence $q^{\varepsilon _{n}}$ (with $\varepsilon
  _{n}\rightarrow 0$) has $q^{0}$ as its limit. Hence, $q^{\varepsilon
  }\rightharpoonup q^{0}$ as $\varepsilon \rightarrow 0$. This
  finishes the proof.
\end{proof}

The next theorem shows some nice $H^2$-regularity results for the
unique solution $u^{ \eps} \in {\cal V}$ of the P-problem
(\ref{Pvar}). This result is proven for the moment only for a
simplified geometry: $\Omega:=(0,L_x) \times (0,L_y)$ and $b=(0,1)$
assumed constant and aligned in the $y$ direction.  Let us thus study
the system
\begin{equation}\label{Psim}
  \left\{
    \begin{array}{ll}
      -{1 \over \eps} \partial_y (A_y \partial_y u^\eps ) - \partial_x (A_x \partial_x u^\eps ) =f \,, & \text{in}\,\,\, \Omega\\[2mm]
      \partial_y u^\eps =0 \,, & \text{for} \,\,\, y=0,L_y\\[2mm]
      u^\eps =0\,, & \text{for}\,\,\, x=0,L_x\,.
    \end{array}
  \right.
\end{equation}

\begin{theorem}\label{der2}
  Take $\Omega:=(0,L_x) \times (0,L_y)$, $b=(0,1)$, suppose that
  Hypothesis A is satisfied and moreover that $A_x=(A_{\perp
  })_{11}\in W^{2,\infty }(\Omega )$ and $A_y=A_{||}\in W^{2,\infty
  }(\Omega )$.  Then $u^{\eps}$, the unique solution of (\ref{Psim}),
  belongs to $H^{2}(\Omega )$ and we have the estimates
  \begin{equation}
    ||\partial_x u^\eps||_{L^{2}}^{2}+\frac{1}{\varepsilon ^{2}}%
    ||\partial_y u^\eps||_{L^{2}}^{2}\leq C||f||_{L^{2}}^{2}\,,
    \label{est_H1}
  \end{equation}%
  \begin{equation}
    ||\partial_{xx}u^\eps||_{L^{2}}^{2}+\frac{1}{\varepsilon }%
    ||\partial_{xy}u^\eps||_{L^{2}}^{2}+\frac{1}{\varepsilon ^{2}}%
    ||\partial_{yy}u^\eps||_{L^{2}}^{2}\leq C||f||_{L^{2}}^{2}\,,
    \label{est2d}
  \end{equation}
  with $C>0$ a constant independent of $\eps$ and $f$.
\end{theorem}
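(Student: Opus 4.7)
The first estimate (\ref{est_H1}) is just bookkeeping on top of Theorem \ref{thm_cont}, since the present geometry fits Hypothesis B with $\xi_1=x$, $\xi_2=y/L_y$: the bound $\|\nabla_{||}u^\eps\|_{L^2}\leq C\eps\|f\|_{L^2}$ directly yields the $1/\eps^2$ term, while the triangle inequality $\|\partial_x u^\eps\|_{L^2}\leq\|\nabla_\perp u^\eps-\nabla_\perp u^0\|_{L^2}+\|\nabla_\perp u^0\|_{L^2}$, combined with the $\sqrt\eps$ convergence rate of Theorem \ref{thm_cont} and the $H^2$-bound on $u^0$ from Lemma \ref{lem2}, delivers $\|\partial_x u^\eps\|_{L^2}\leq C\|f\|_{L^2}$. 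The real work is therefore to establish the $H^2$-estimate (\ref{est2d}).

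My plan for (\ref{est2d}) is the classical energy trick: test the weak form of (\ref{Psim}) formally with $v=-\partial_{xx}u^\eps$ and integrate by parts once in $y$ and once in $x$. The positive quadratic terms that emerge are precisely $\int_\Omega A_x(\partial_{xx}u^\eps)^2\,dx + \frac{1}{\eps}\int_\Omega A_y(\partial_{xy}u^\eps)^2\,dx$, matching the first two contributions of (\ref{est2d}). All boundary traces vanish: on $\{y=0,L_y\}$ the Neumann condition $\partial_y u^\eps=0$ kills the trace of $A_y\partial_y u^\eps$ and (after tangential $x$-differentiation) of $\partial_{xy}u^\eps$, while on $\{x=0,L_x\}$ the Dirichlet condition $u^\eps=0$ forces $\partial_y u^\eps=0$ along those edges, killing the $x$-direction traces. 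The remainder terms $-\int f\,\partial_{xx}u^\eps$, $-\int(\partial_xA_x)(\partial_xu^\eps)(\partial_{xx}u^\eps)$, and $-\frac{1}{\eps}\int(\partial_xA_y)(\partial_y u^\eps)(\partial_{xy}u^\eps)$ are then absorbed by Young's inequality; in the last one the dangerous factor $1/\eps$ is tamed by splitting it as $(1/\sqrt\eps)\cdot(1/\sqrt\eps)$ and using $\|\partial_y u^\eps\|_{L^2}\leq C\eps\|f\|_{L^2}$ from (\ref{est_H1}), which leaves only a harmless $O(\eps\|f\|_{L^2}^2)$ residue. The outcome is $\|\partial_{xx}u^\eps\|_{L^2}^2+\frac{1}{\eps}\|\partial_{xy}u^\eps\|_{L^2}^2\leq C\|f\|_{L^2}^2$.

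The $\partial_{yy}u^\eps$ bound then follows algebraically from (\ref{Psim}): solving pointwise for $\frac{1}{\eps}A_y\partial_{yy}u^\eps$, each right-hand side term has $L^2$-norm controlled by $C\|f\|_{L^2}$ (via the previous step for $\partial_{xx}u^\eps$ and (\ref{est_H1}) for $\frac{1}{\eps}\partial_y u^\eps$), and dividing by $A_y\geq A_0>0$ yields $\frac{1}{\eps^2}\|\partial_{yy}u^\eps\|_{L^2}^2\leq C\|f\|_{L^2}^2$. The main obstacle I expect is justifying that $\partial_{xx}u^\eps$ is an admissible test function before the $H^2$-regularity of $u^\eps$ is known. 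The standard remedy is Nirenberg's tangential difference quotients: one works with a discrete analogue such as $-D_{-h}^x D_h^x u^\eps$ (treating the Dirichlet boundary $\{x=0,L_x\}$ by odd reflection) or $-D_{-h}^y D_h^y u^\eps$ (treating the Neumann boundary $\{y=0,L_y\}$ by even reflection), reproduces the formal computation at the level of $h$-quantities, and passes to the limit $h\to 0$. The $W^{2,\infty}$-regularity of $A_x,A_y$ and the simple rectangular geometry make this rigorization routine once the formal energy identity above is in place, and both estimates are delivered simultaneously.
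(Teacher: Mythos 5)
Your proposal is correct, and the heart of it --- the energy estimate obtained by testing against a second $x$-derivative, taming the $1/\eps$ remainder $\frac{1}{\eps}\int(\partial_x A_y)(\partial_y u^\eps)(\partial_{xy}u^\eps)$ via the smallness of $\|\partial_y u^\eps\|_{L^2}$ and Young's inequality, and then reading off $\partial_{yy}u^\eps$ pointwise from the equation --- is exactly the paper's mechanism; the listed remainder terms and vanishing boundary traces match the paper's computation. The two genuine differences are architectural. First, the paper multiplies by $-\partial_{yy}u^\eps-\partial_{xx}u^\eps$, so the energy identity also yields $\frac{1}{\eps}\|\partial_{yy}u^\eps\|^2\leq C\|f\|^2$ directly; you test with $-\partial_{xx}u^\eps$ alone and recover $\partial_{yy}u^\eps$ only at the algebraic final step (which the paper also performs to upgrade $1/\eps$ to $1/\eps^2$). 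Second, and more importantly, you import (\ref{est_H1}) from Theorem \ref{thm_cont} and Lemma \ref{lem2}, whereas the paper deliberately re-derives it inside this proof: it starts from the weaker basic bound $\|\partial_y u^\eps\|\leq C\sqrt{\eps}\|f\|$ (which still suffices to absorb the $1/\eps$ remainder), obtains the intermediate estimate $\frac1\eps\|\partial_{yy}u^\eps\|^2+\frac1\eps\|\partial_{xy}u^\eps\|^2+\|\partial_{xx}u^\eps\|^2\leq C\|f\|^2$, and then bootstraps via $\frac{1}{\eps}\|\partial_y(A_y\partial_y u^\eps)\|\leq C\|f\|$ and a Poincar\'e inequality in $y$ to get $\|\partial_y u^\eps\|\leq C\eps\|f\|$, i.e.\ (\ref{est_H1}), before returning to the equation for (\ref{est2d}). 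Your route is logically sound (there is no circularity, since Theorem \ref{thm_cont} does not depend on Theorem \ref{der2}), but it forfeits the feature the paper's accompanying Remark advertises, namely that this proof is self-contained and independent of Lemmas \ref{lem1}--\ref{lem2} and Hypothesis B; the paper's bootstrap shows you could have avoided Theorem \ref{thm_cont} entirely. Your explicit appeal to difference quotients to justify the test function is a point of rigor the paper glosses over.
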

\begin{remark}
  The estimate (\ref{est_H1}) is already proven in Theorem
  \ref{thm_cont} in a more general context. However, we provide below
  an alternative proof for it, which consists in the interplay with
  the estimates for the second derivatives and which does not require
  the Lemmas proven above. This alternative proof is thus simpler than
  that of Theorem \ref{thm_cont} presented above, but it is not
  straight forward to generalize it to the case of an arbitrary
  geometry of $\Omega$ and an arbitrary field $b$.
\end{remark}
\begin{proof}[Proof of Theorem \ref{der2}]
  Standard elliptic
  results permit to show, under the additional hypothesis of this
  theorem, that ${u^\eps} \in H^2(\Omega)$.

  First remark that multiplying the equation by ${u^\eps}$, integrating over
  $\Omega$ yields immediately by integration by parts the
  $H^1$-estimate
  \begin{equation} \label{H1}
    {1 \over \eps} ||\nabla_y {u^\eps}||^2_{L^{2}} + ||\nabla_x {u^\eps}||^2_{L^{2}} \le C ||f||^2_{L^{2}}\,.
  \end{equation}
  Rewriting now the equation as 
  $$
  -{1 \over \eps} A_y \partial_{yy} {u^\eps} - A_x \partial_{xx} {u^\eps} =f+{1 \over \eps} (\partial_y A_y) \partial_y {u^\eps} + (\partial_x A_x) \partial_x {u^\eps} \,,
  $$
  multiplying it by $-\partial_{yy} {u^\eps} -\partial_{xx} {u^\eps}$ and
  integrating over $\Omega$ yields (by integration by parts)
  $$
  \begin{array}{l}
    {1 \over \eps} || \sqrt{A_y} \partial_{yy} {u^\eps}||^2 +{1 \over \eps} || \sqrt{A_y} \partial_{xy} {u^\eps}||^2 + || \sqrt{A_x} \partial_{xy} {u^\eps}||^2 + || \sqrt{A_x} \partial_{xx} {u^\eps}||^2  \\[3mm]
    \hspace{1cm} \le C \left[ ||f|| \,(||\partial_{xx} {u^\eps}|| +
      ||\partial_{yy} {u^\eps}||) +  { 1 \over \eps} ( || \partial_y {u^\eps} ||\,
      || \partial_{xy} {u^\eps}||+  || \partial_y {u^\eps} ||\,  || \partial_{yy}
      {u^\eps}||) +  || \partial_x {u^\eps} ||\,  ( || \partial_{xy} {u^\eps}||+   || \partial_{xx} {u^\eps}||) \right]
  \end{array} 
  $$
  Using now the $H^1$-estimate (\ref{H1}), in particular that
  $|| \partial_y {u^\eps} || \le C \sqrt{\eps} ||f||$ one gets
  $$
  \begin{array}{l}
    {1 \over \eps} || \sqrt{A_y} \partial_{yy} {u^\eps}||^2 +{1 \over \eps} || \sqrt{A_y} \partial_{xy} {u^\eps}||^2 + || \sqrt{A_x} \partial_{xy} {u^\eps}||^2 + || \sqrt{A_x} \partial_{xx} {u^\eps}||^2  \\[3mm]
    \hspace{1cm} \le C  ||f|| \left( ||\partial_{xx} {u^\eps}|| +  ||\partial_{yy} {u^\eps}|| +  { 1 \over \sqrt{\eps}}  || \partial_{xy} {u^\eps}||+  { 1 \over \sqrt{\eps}}   || \partial_{yy} {u^\eps}|| +   || \partial_{xy} {u^\eps}|| \right)\,,
  \end{array} 
  $$
  yielding immediately by Young inequality
  $$
  \begin{array}{l}
    {1 \over \eps} || \partial_{yy} {u^\eps}||^2 +{1 \over \eps} || \partial_{xy} {u^\eps}||^2 + || \partial_{xx} {u^\eps}||^2  \le C ||f||^2\,.
  \end{array} 
  $$
  Coming now back to the equation
  $$
  -{1 \over \eps} \partial_y (A_y \partial_y {u^\eps})  =f + \partial_x (A_x \partial_x {u^\eps})\,,
  $$
  one gets with the last estimates
  $$
  {1 \over \eps} ||\partial_y (A_y \partial_y {u^\eps})|| \le C ||f||\,.
  $$
  Poincar\'e's inequality permits then to estimate
  $$
  ||A_y \partial_y {u^\eps} || \le C ||\partial_y (A_y \partial_y {u^\eps})|| \le C \eps ||f||\,,
  $$
  yielding $|| \partial_y {u^\eps}|| \le  C \eps ||f||$ and thus (\ref{est_H1}). Coming again back to the equation
  $$
  -{1 \over \eps} A_y \partial_{yy} {u^\eps}  =f + \partial_x (A_x \partial_x {u^\eps}) + {1 \over \eps} (\partial_y A_y) \partial_y {u^\eps}\,,
  $$ 
  permits to show (\ref{est2d}) and to conclude the proof.
\end{proof}

\section{Numerical results} \label{SEC4}
This section is devoted to the numerical simulation of the anisotropic
P-problem (\ref{P}) via the here introduced MM-scheme and to the
numerical illustration of its advantages.
\subsection{A finite element discretization} \label{SEC31}
Let us introduce a discretization of the domain $\Omega $ of size $h$
and a finite element space ${\cal V}_{h}$ of type $\mathbb P_{k}$ or
$\mathbb Q_{k}$ on this mesh. We assume that the boundary conditions
on $\Gamma _{D}$ are enforced in the definition of ${\cal V}_{h}$,
i.e. ${\cal V}_{h}\subset {\cal V}$. Consider then the subspace ${\cal
  L}_h$ of ${\cal V}_{h}$ defined by ${\cal L}_{h}$ $={\cal V}_{h}\cap
{\cal L}$
\begin{equation*}
  {\cal L}_{h}=\{q_{h}\in {\cal V}_{h}\,,\text{ such that }q_{h}|_{\Gamma _{in}}=0\}.
\end{equation*}
This choice is explained in more details in our previous paper
\cite{DDLNN}.  The standard discretization of (\ref{Pa}) writes then:
find $(u_{h}^\eps,q_{h}^\eps) \in {\cal V}_{h} \times {\cal L}_h$ such
that
\begin{equation}
  (MM)_h\,\,\, 
  \left\{ 
    \begin{array}{l}
      a_{\perp }(u_{h}^\eps,v_{h})+a_{{||}}(q_{h}^\eps,v_{h})=\int_{\Omega }fv_{h}\, dx,\quad
      \forall v_{h}\in {\cal V}_{h} \\ [3mm]
      a_{{||}}(u_{h}^\eps,w_{h})-\eps a_{{|| }}(q_{h}^\eps,w_{h})=0,\quad \forall
      w_{h}\in {\cal L}_h\,.%
    \end{array}%
  \right.  
  \label{Ph}
\end{equation}%

This section concerns the detailed study of the obtained numerical
results. In particular, we compare the method presented herein with
the duality-based AP-approach developed in our previous article
\cite{DDLNN} and present the convergence of the new scheme for an
arbitrary anisotropy field $b$ and constant $\varepsilon$ test case,
the convergence being uniform in $\eps$. The detailed numerical
analysis shall be presented in a forthcoming work.

\subsection{Discretization} \label{Discr}
Let us present, for simplicity, the discretization in a 2D case, the
3D case being a simple extension. The here considered computational
domain $\Omega $ is a square $\Omega = [0,1]\times [0,1]$. All
simulations are performed on structured meshes. Let us introduce the
Cartesian, homogeneous grid
\begin{gather}
  x_i = i / N_x \;\; , \;\; 0 \leq i \leq N_x \,, \quad
  y_j = j / N_y \;\; , \;\; 0 \leq j \leq N_y
  \label{eq:Jp8a},
\end{gather}
where $N_x$ and $N_y$ are positive even constants, corresponding to
the number of discretization intervals in the $x$-
resp. $y$-direction. The corresponding mesh-sizes are denoted by $h_x
>0$ resp. $h_y >0$. Choosing a $\mathbb Q_2$ finite element method
($\mathbb Q_2$-FEM), based on the following quadratic base functions

\begin{gather}
  \theta _{x_i}=
  \left\{
    \begin{array}{ll}
      \frac{(x-x_{i-2})(x-x_{i-1})}{2h_x^{2}} & x\in [x_{i-2},x_{i}],\\
      \frac{(x_{i+2}-x)(x_{i+1}-x)}{2h_x^{2}} & x\in [x_{i},x_{i+2}],\\
      0 & \text{else}
    \end{array}
  \right.\,, \quad 
  \theta _{y_j} =
  \left\{
    \begin{array}{ll}
      \frac{(y-y_{j-2})(y-y_{j-1})}{2h_y^{2}} & y\in [y_{j-2},y_{j}],\\
      \frac{(y_{j+2}-y)(y_{j+1}-y)}{2h_y^{2}} & y\in [y_{j},y_{j+2}],\\
      0 & \text{else}
    \end{array}
  \right.
  \label{eq:Js8a1}
\end{gather}
for even $i,j$  and
\begin{gather}
  \theta _{x_i}=
  \left\{
    \begin{array}{ll}
      \frac{(x_{i+1}-x)(x-x_{i-1})}{h_x^{2}} & x\in [x_{i-1},x_{i+1}],\\
      0 & \text{else}
    \end{array}
  \right.\,, \quad 
  \theta _{y_j} =
  \left\{
    \begin{array}{ll}
      \frac{(y_{j+1}-y)(y-y_{j-1})}{h_y^{2}} & y\in [y_{j-1},y_{j+1}],\\
      0 & \text{else}
    \end{array}
  \right.
  \label{eq:Js8a2}
\end{gather}
for odd $i,j$, we define
$$
W_h := \{ v_h = \sum_{i,j} v_{ij}\,  \theta_{x_i} (x)\,  \theta_{y_j}(y)\}\,.
$$
The spaces ${\cal V}_h$ and ${\cal L}_h$ are then defined by
\begin{equation*}
  {\cal V}_{h}=\{u_{h}\in {\cal W}_{h}\,,\text{ such that
  }u_{h}|_{\Gamma _{D}}=0\}\,, \quad {\cal L}_{h}=\{q_{h}\in {\cal V}_{h}\,,\text{ such that }q_{h}|_{\Gamma _{in}}=0\}.
\end{equation*}
The matrix elements are computed using the 2D Gauss quadrature
formula, with 3 points in the $x$ and $y$ direction:
\begin{gather}
  \int_{-1}^{1}\int_{-1}^{1}f (x,y) =
  \sum_{i,j=-1}^{1} \omega _{i}\omega _{j} f (x_i,y_j)\,,
  \label{eq:Jk9a}
\end{gather}
where $x_0=y_0=0$, $x_{\pm 1}=y_{\pm 1}=\pm\sqrt {\frac{3}{5}}$,
$\omega _0 = 8/9$ and $\omega _{\pm 1} = 5/9$, which is exact for
polynomials of degree 5.

\subsection{Duality-Based (DB) asymptotic-preserving method}
In order to compare the new, here introduced, MM-reformulation
(\ref{Pa}) with the previously considered \cite{DDLNN} duality-based
AP-scheme, let us briefly recall the former one. This alternative
approach is based on the following orthogonal decomposition of the
solution of the original problem (\ref{P}): $u = p + q$, where $p\in
\cal G$ and $q \in \cal A$. The vector space
\begin{gather}
  \mathcal G = \{ u \in \mathcal V \ | \ \nabla_\parallel u =0\}\,,
  \label{eq:Jd8a}
\end{gather}
is the Hilbert space of functions, which are constant along the field
lines of $b$ and was introduced in (\ref{G}). The vector space $\cal
A$ is the $L^2$-orthogonal complement to $\cal G$ in $\cal V$, defined
by
\begin{gather}
  \mathcal A : = 
   \{ u \in \mathcal V \,\,|\,\,\, (u,v ) =0 \;\; , \;\;  \forall
  v \in \mathcal G\}\,, \quad {\cal V}={\cal G} \oplus^{\perp} {\cal A}\,.
  \label{eq:Jg8a}
\end{gather}
Hence, the subspace $\cal A$ contains the functions that have zero
average along the field lines $b$.

This decomposition leads to the system: find $(p^\eps,q^\eps)\in \cal
G \times \cal A$ such that:
\begin{gather}
  \left\{
    \begin{array}{ll}
      \displaystyle
      a_{\perp} (p^{\varepsilon },\eta ) + a_{\perp} (q^{\varepsilon},\eta ) = (f,\eta )
      & \forall \eta \in \mathcal G\\[3mm]
      \displaystyle
      a_{||} (q^{\varepsilon },\xi ) + \varepsilon a_{\perp}
      (q^{\varepsilon},\xi) + \varepsilon a_{\perp} (p^{\varepsilon},\xi)
      = \varepsilon (f, \xi )
      & \forall \xi \in \mathcal A
    \end{array}
  \right.
  \label{eq:Ji8a},
\end{gather}
which is asymptotic-preserving, well-posed and well-conditioned
regardless of the value of $\varepsilon$. The discretization of the
vector spaces $\cal A$ and $\cal G$ is achieved by means of a Lagrange
multiplier technique: first we explore the orthogonality of $\cal A$
and $\cal G$ in order to avoid the direct discretization of the space
$\cal A$. The thus obtained system reads: find
$(p^\eps,q^\eps,l^\eps)\in \cal G \times \cal V \times \cal G$ such
that
\begin{gather}
  \left\{
    \begin{array}{ll}
      \displaystyle
      a_{\perp} (p^\eps,\eta ) + a_{\perp} (q^\eps,\eta )
      = (f,\eta )
      & \forall \eta \in \mathcal G, \\[3mm]
      \displaystyle
      a_{||} (q ^\eps,\xi ) + \varepsilon a_{\perp}
      (q ^\eps,\xi) + \varepsilon a_{\perp} (p ^\eps,\xi)
      + \left( l ^\eps , \xi \right)
      = \varepsilon (f, \xi ) 
      & \forall \xi \in \mathcal V , \\[3mm]
      \displaystyle
      \left( q ^\eps, \chi \right) =0
      & \forall \chi \in \mathcal G,
    \end{array}
  \right.
  \label{eq:Jj8a}
\end{gather}
where $l ^\eps $ is a Lagrange multiplier. The additional term $(l
^\eps,\xi)$ in the second equations allows us to replace the vector
space $\mathcal A$ by $\mathcal V$. The third equation forces $q
^\eps$ to belong to $\mathcal A$.

Afterwards, the definition of the space $\cal G$ is used to obtain a
system which does not require the direct discretization of $\cal
G$. The resulting system reads now: find $(p ^\eps,\;\lambda ^\eps,\;q
^\eps,\;l ^\eps,\;\mu ^\eps) \in \mathcal V\times \mathcal L\times
\mathcal V\times \mathcal V \times \mathcal L$ such that
\begin{gather}
  (DB)\,\,\,
  \left\{
    \begin{array}{l}
      \displaystyle
      a_{\perp} (p ^\eps , \eta ) +
      a_{\perp} (q ^\eps , \eta ) + a_{||}(\eta,\lambda ^\eps)
      = \left(f,\eta  \right) \,, \quad \forall \eta \in \mathcal V\,,
      \\[3mm]
      \displaystyle
      a_{||}( p ^\eps,\kappa)=0\,,\quad \forall \kappa \in \mathcal L\,, 
      \\[3mm]
      \displaystyle
      a_{||} (q ^\eps, \xi ) +
      \varepsilon a_{\perp} (q ^\eps , \xi ) +
      \varepsilon a_{\perp} (p ^\eps , \xi ) +
      \left( l ^\eps, \xi \right)
      = \varepsilon \left(f,\xi  \right)  \,, \quad \forall \xi \in \mathcal V\,,
      \\[3mm]
      \displaystyle
      \left( q ^\eps, \chi \right) +
      a_{||}(\chi, \mu ^\eps)=0\,, \quad \forall \chi \in \mathcal V\,,
      \\[3mm]
      \displaystyle
      a_{||}(l ^\eps,\tau)=0\,, \quad \forall \tau \in \mathcal  L\,,
    \end{array}
  \right.
  \label{DBAP}
\end{gather}
with $\cal L$ being a Lagrange multiplier space defined by
(\ref{GL}). Two additional Lagrange multipliers $\lambda^\eps $ and
$\mu ^\eps $ are introduced in order to replace $\mathcal G$ by the
bigger and easier to implement vector-space $\mathcal V$. For a more
detailed presentation of the duality-based asymptotic preserving
reformulation, we refer to \cite{DDLNN}.

This decomposition of the solution $u \in {\cal V}$ into two parts: a
mean part $p \in {\cal G}$ and the fluctuating part $q \in {\cal A}$
with zero average along the field lines, may seem more intuitive than
the new decomposition presented in this paper. This feature however
has its drawbacks. The fact that we had to introduce three additional
unknowns increases significantly the computational complexity of the
problem. In the following, we compare the DB Asymptotic-Preserving
approach (\ref{DBAP}) with the new MM Asymptotic-Preserving approach
(\ref{Pa}) and show that the new method is superior in terms of memory
requirements and computational time, while the accuracy remains the
same. In particular, we demonstrate that convergence is uniform in
$\eps$.

\subsection{Numerical tests}\label{sec:test case}
\subsubsection{2D test case, constant $\varepsilon$, uniform and aligned $b$-field}\label{sec:tc_bc}
In this section we compare the numerical results obtained via the
$\mathbb Q_2$-FEM described in Section \ref{Discr}, and applied to the
Singular Perturbation model (\ref{P}), the Duality-Based model
(\ref{DBAP}) and the Micro-Macro reformulation (\ref{Pa}). In all
numerical tests we set $A_\perp = Id$ and $A_\parallel = 1$. We start
with a simple test case, where the analytical solution is known. Let
the source term $f$ be given by
\begin{gather}
  f = \left(4 + \varepsilon  \right) \pi^{2} \cos \left( 2\pi x\right)
  \sin \left(\pi y \right) +
  \pi^{2} \sin \left(\pi y \right)
  \label{eq:J87a}
\end{gather}
and let the $b$ field be aligned with the $x$-axis. Hence, the solution
$u^{\varepsilon }$ of (\ref{P}) is given by
\begin{gather*}
  u^{\varepsilon } = \sin \left(\pi y \right) + \varepsilon \cos \left( 2\pi x\right)
  \sin \left(\pi y \right)
\end{gather*}

We denote by $u_P$, $u_D$ resp. $u_A$ the numerical solutions of the
Singular Perturbation model (\ref{P}), the Duality-Based Asymptotic
Preserving model (\ref{DBAP}) resp. the Micro-Macro Asymptotic Preserving
reformulation (\ref{Pa}). The comparison will be done in the
$L^{2}$-norm as well as the $H^{1}$-norm. The linear systems obtained
after discretization of the three methods are solved using the same
numerical algorithm --- LU decomposition implemented in a solver
MUMPS\cite{MUMPS}.

\def\xxxa{0.45\textwidth}
\begin{figure}[!ht] 
  \centering
  \subfigure[$L^{2}$ error for a grid with $50\times 50$ points.]
  {\includegraphics[angle=-90,width=\xxxa]{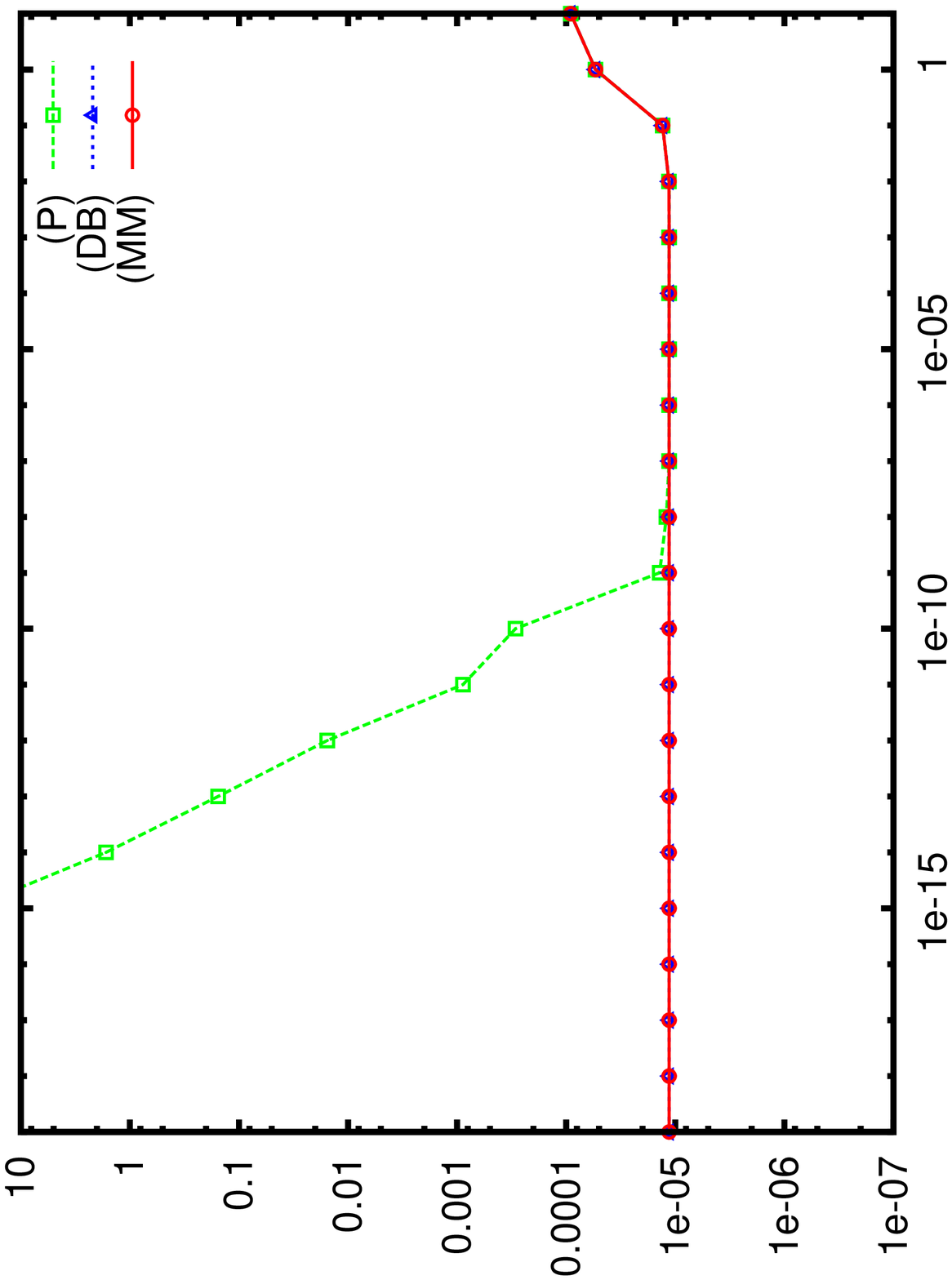}}
  \subfigure[$H^{1}$ error for a grid with $50\times 50$ points.]
  {\includegraphics[angle=-90,width=\xxxa]{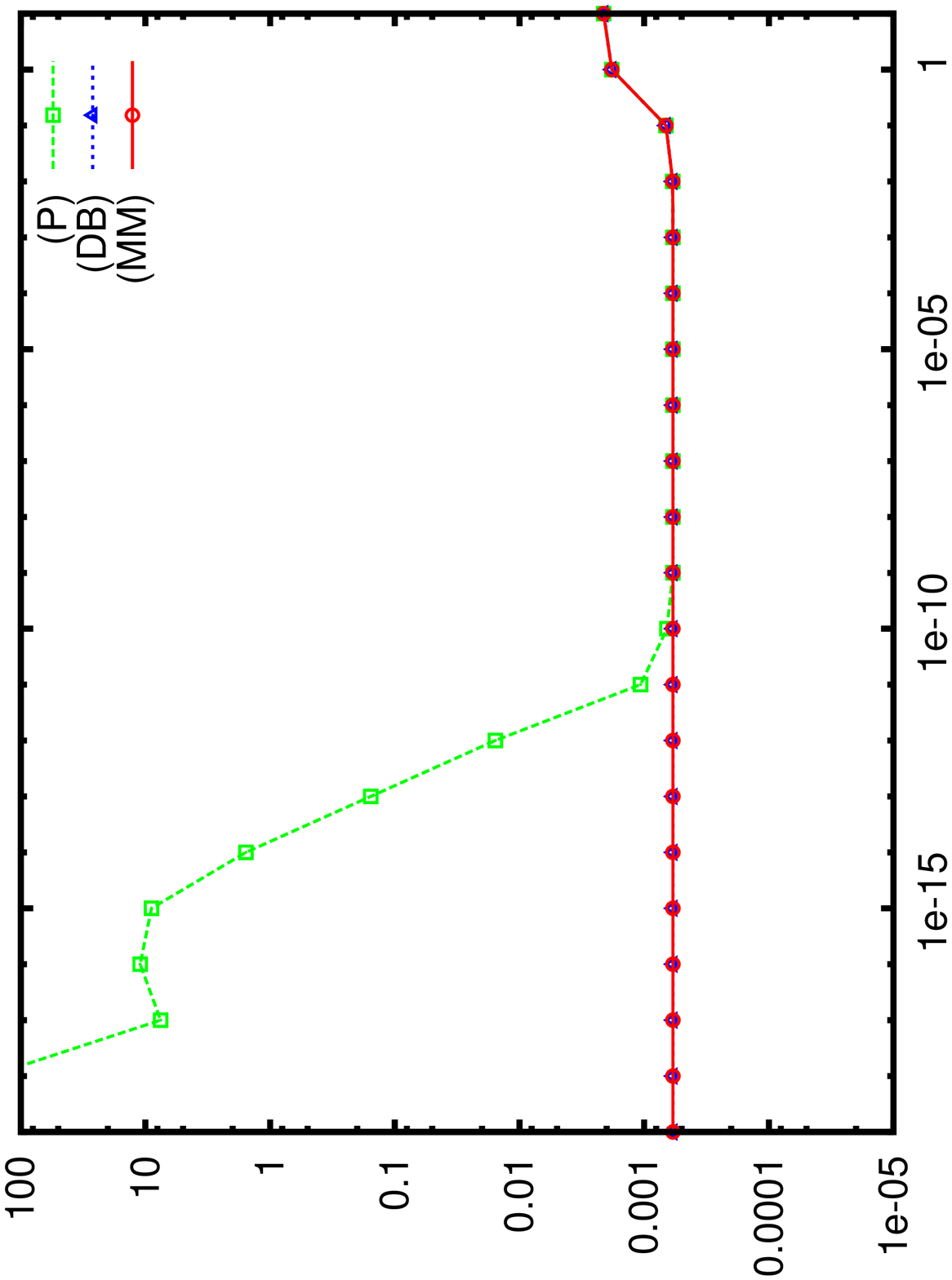}}

  \subfigure[$L^{2}$ error for a grid with $100\times 100$ points.]
  {\includegraphics[angle=-90,width=\xxxa]{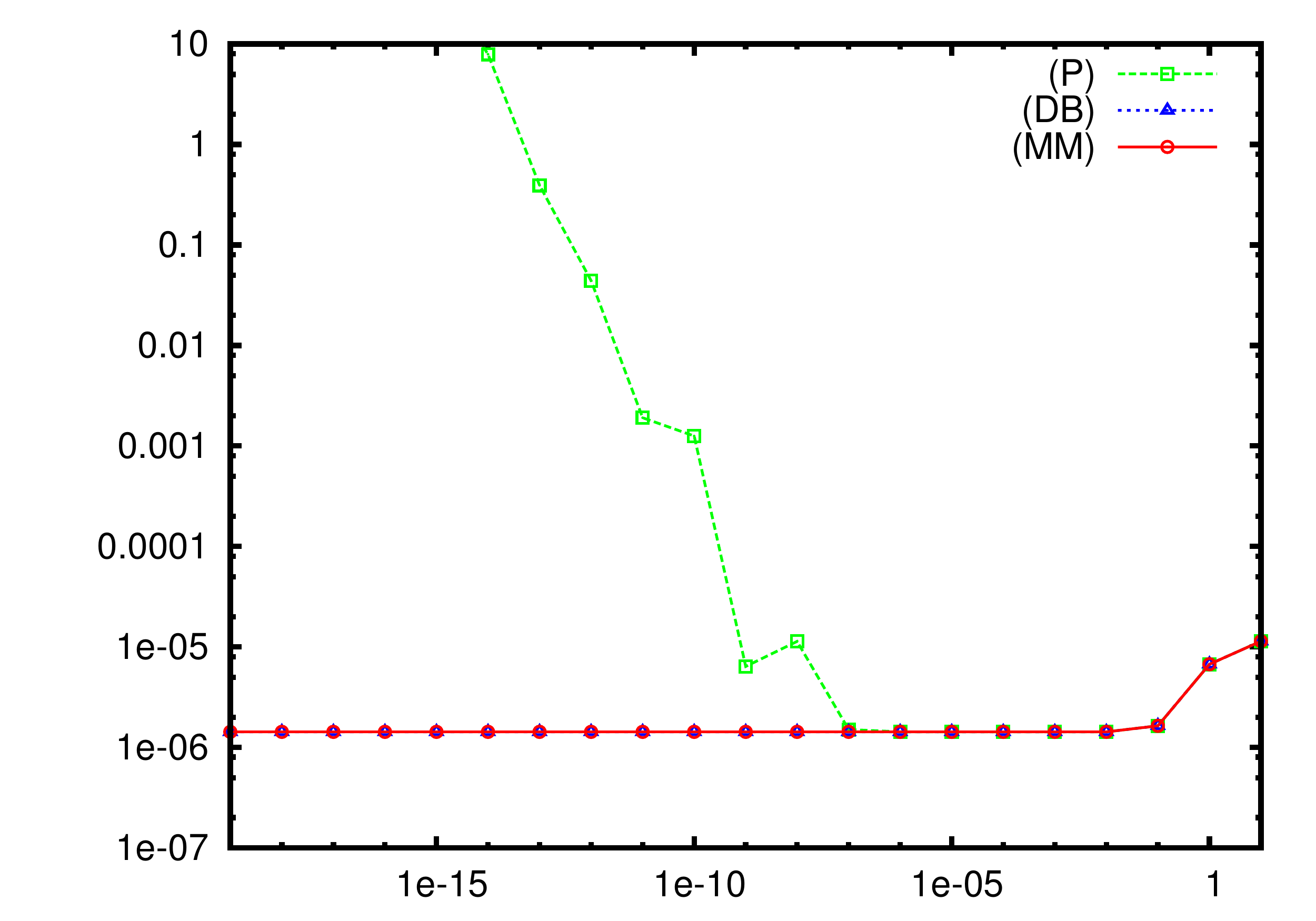}}
  \subfigure[$H^{1}$ error for a grid with $100\times 100$ points.]
  {\includegraphics[angle=-90,width=\xxxa]{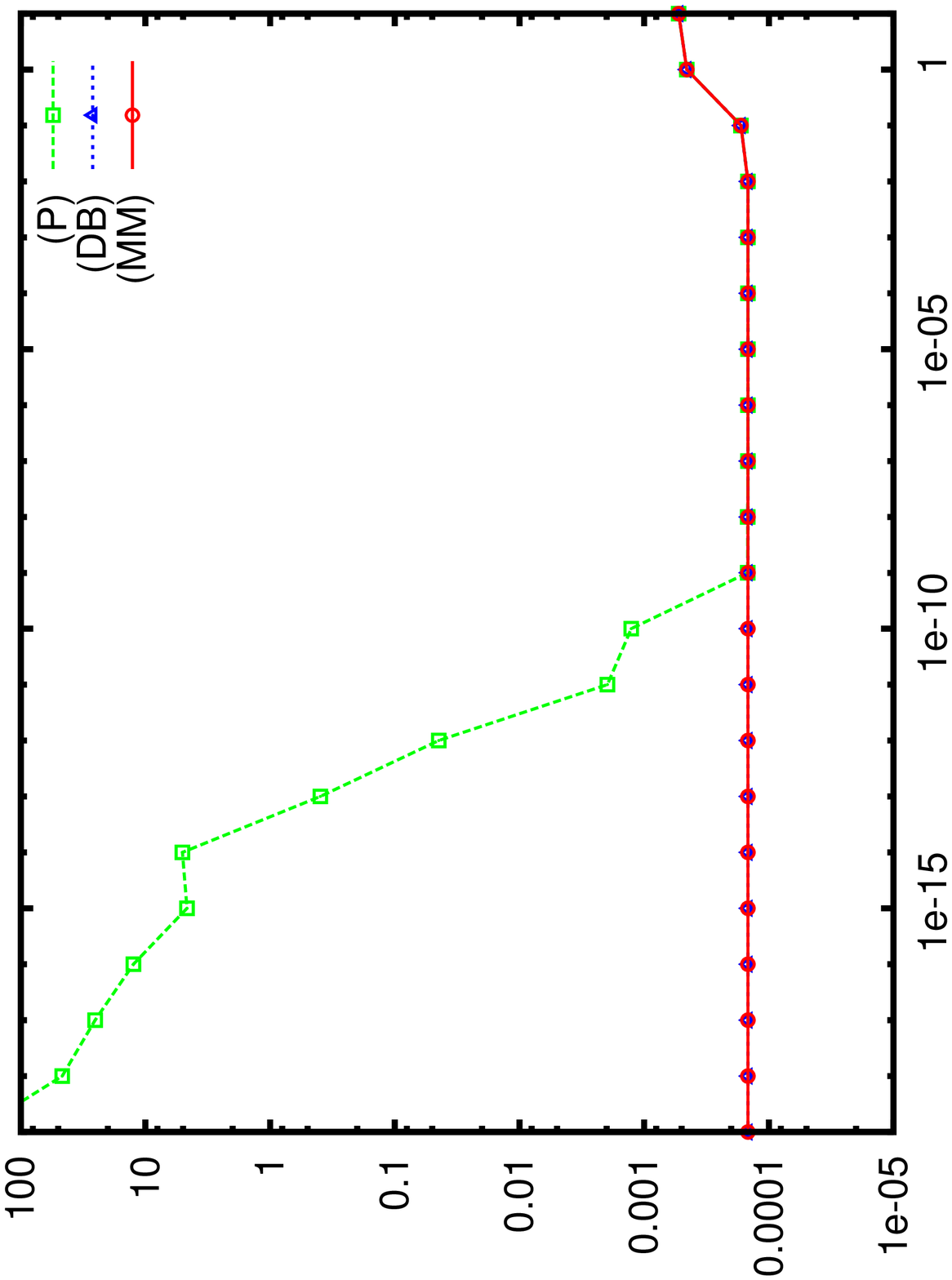}}

  \subfigure[$L^{2}$ error for a grid with $200\times 200$ points.]
  {\includegraphics[angle=-90,width=\xxxa]{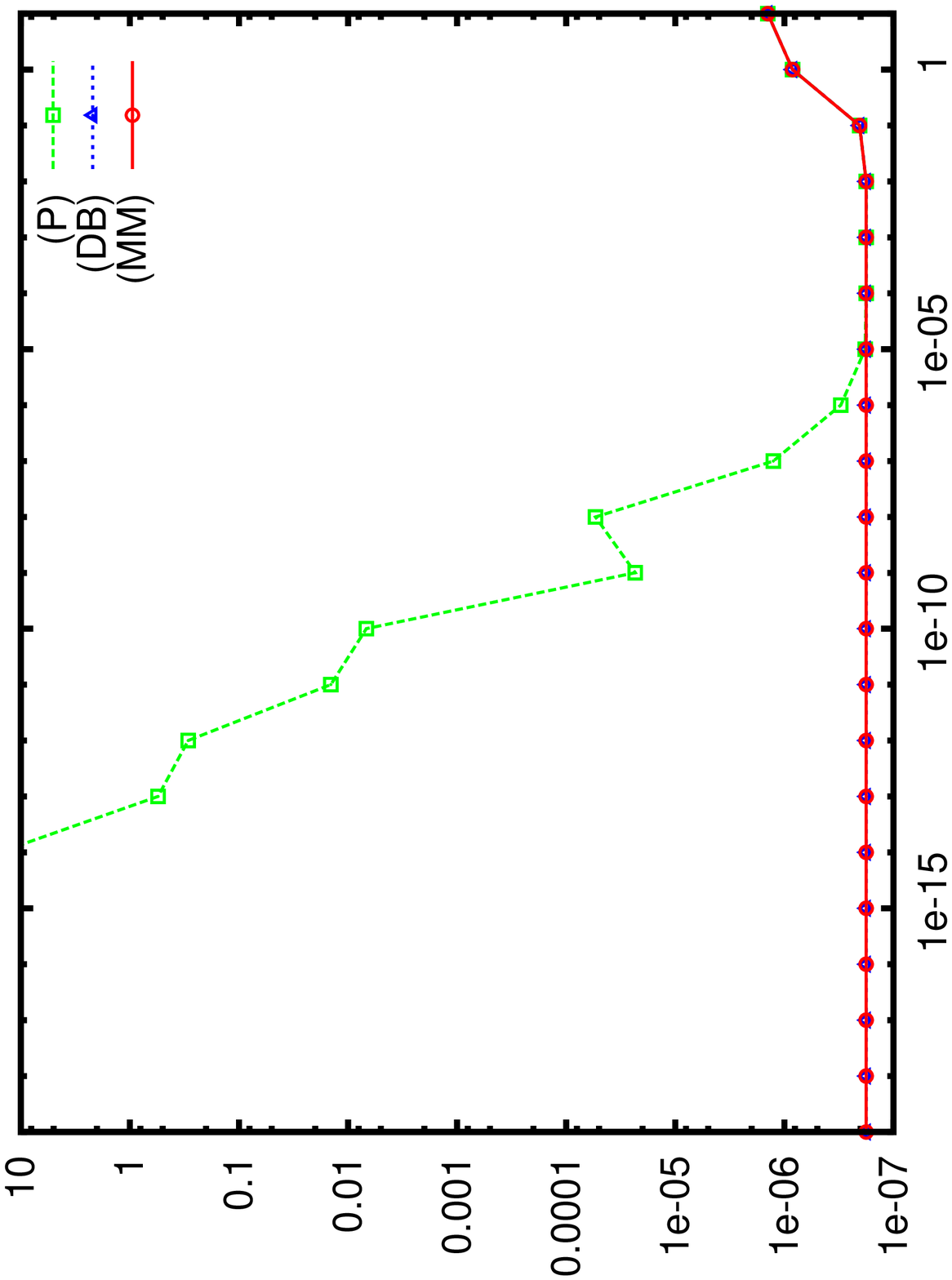}}
  \subfigure[$H^{1}$ error for a grid with $200\times 200$ points.]
  {\includegraphics[angle=-90,width=\xxxa]{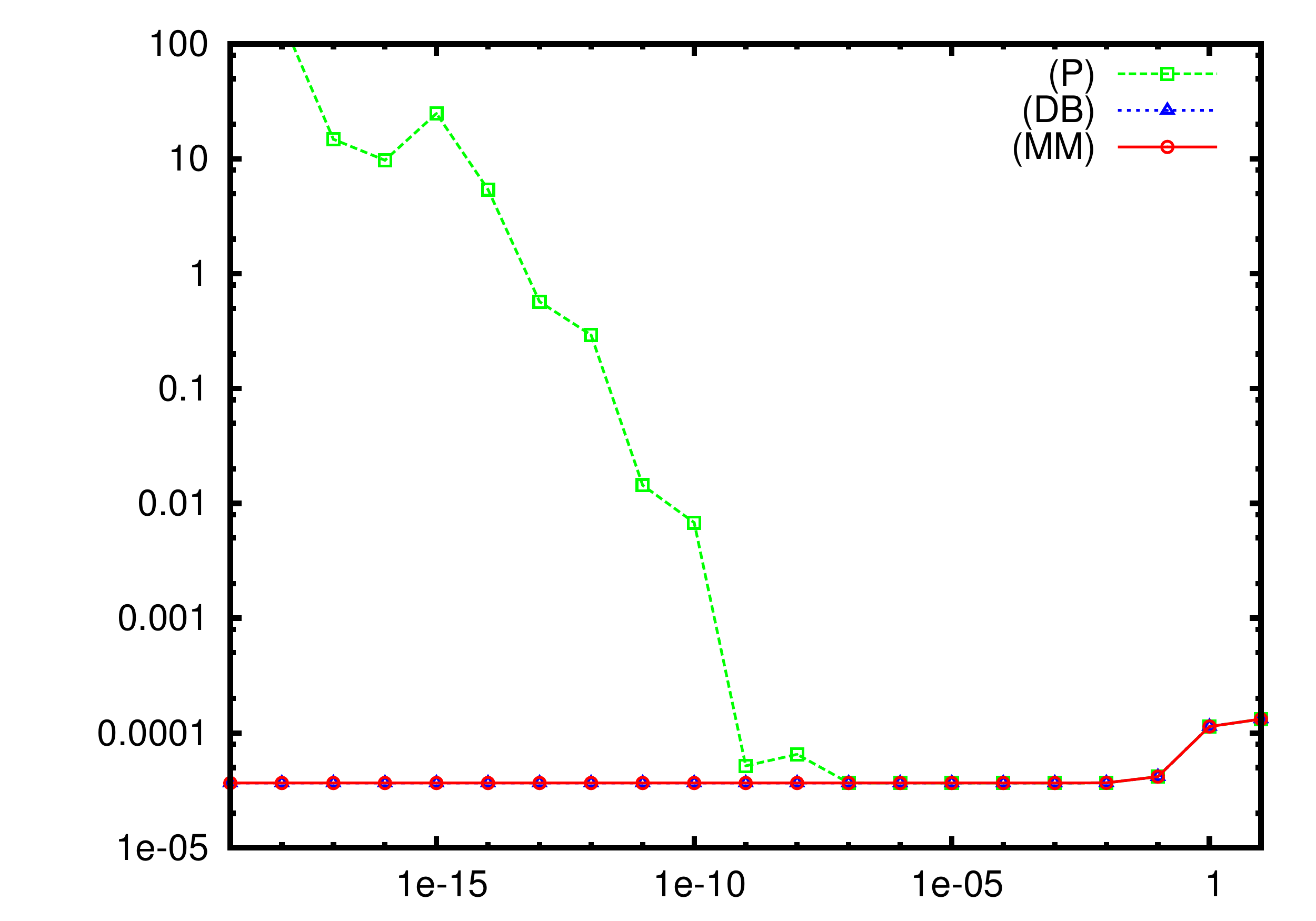}}

  \caption{Relative $L^{2}$ (left column) and $H^{1}$ (right column)
    errors between the exact solution $u^{\varepsilon }$ and the
    computed numerical solutions $u_M$ (MM), $u_D$ (DB), $u_P$ (P) for
    the test case with constant $b$. The error is plotted as a
    function of the parameter $\eps$ and for three different
    mesh-sizes.}
  \label{fig:error_ec_bc}
\end{figure}

\begin{table}
  \centering
  \begin{tabular}{|c||c|c||c|c||c|c|}
    \hline
    \multirow{2}{*}{$\varepsilon$} &
    \multicolumn{2}{|c||}{\rule{0pt}{2.5ex}MM scheme} 
    & \multicolumn{2}{|c||}{DB scheme} 
    & \multicolumn{2}{|c|}{Singular Perturbation scheme} \\
    \cline{2-7} 
    &\rule{0pt}{2.5ex}
    $L^2$ error & $H^1$ error & $L^2$ error & $H^1$ error & $L^2$ error & $H^1$ error  \\
    \hline\hline\rule{0pt}{2.5ex}
    10  & 
    $7.2\times 10^{-6}$ & $4.7\times 10^{-3}$ &
    $7.2\times 10^{-6}$ & $4.7\times 10^{-3}$ &
    $7.2\times 10^{-6}$ & $4.7\times 10^{-3}$ 
    \\
    \hline\rule{0pt}{2.5ex} 
    1 & 
    $7.3\times 10^{-7}$ & $4.7\times 10^{-4}$ &
    $7.3\times 10^{-7}$ & $4.7\times 10^{-4}$ &
    $7.3\times 10^{-7}$ & $4.7\times 10^{-4}$ 
    \\
    \hline\rule{0pt}{2.5ex} 
    $10^{-1}$&
    $1.47\times 10^{-7}$ & $9.6\times 10^{-5}$ &
    $1.47\times 10^{-7}$ & $9.6\times 10^{-5}$ &
    $1.45\times 10^{-7}$ & $9.4\times 10^{-5}$ 
    \\
    \hline\rule{0pt}{2.5ex} 
    $10^{-4}$&
    $1.28\times 10^{-7}$ & $8.3\times 10^{-5}$ &
    $1.28\times 10^{-7}$ & $8.3\times 10^{-5}$ &
    $1.26\times 10^{-7}$ & $8.2\times 10^{-5}$ 
    \\
    \hline\rule{0pt}{2.5ex} 
    $10^{-6}$&
    $1.28\times 10^{-7}$ & $8.3\times 10^{-5}$ &
    $1.28\times 10^{-7}$ & $8.3\times 10^{-5}$ &
    $5.9\times 10^{-7}$ & $8.2\times 10^{-5}$ 
    \\
    \hline\rule{0pt}{2.5ex} 
    $10^{-10}$&
    $1.28\times 10^{-7}$ & $8.3\times 10^{-5}$ &
    $1.28\times 10^{-7}$ & $8.3\times 10^{-5}$ &
    $9.9\times 10^{-3}$ & $3.12\times 10^{-2}$ 
    \\
    \hline\rule{0pt}{2.5ex} 
    $10^{-15}$&
    $1.28\times 10^{-7}$ & $8.3\times 10^{-5}$ &
    $1.28\times 10^{-7}$ & $8.3\times 10^{-5}$ &
    $7.1\times 10^{-1}$ & $2.23\times 10^{0}$ 
    \\
    \hline
  \end{tabular}
  \caption{Comparison between the Micro-Macro scheme, the
    Duality-Based reformulation and the Singular
    Perturbation model for $h=0.005$ (200 mesh points
    in each direction) and constant $b$: absolute $L^2$-errors and
    $H^1$-errors, for different $\eps$-values.}
  \label{tab:error}
\end{table}

In Figure \ref{fig:error_ec_bc} we plotted the absolute errors (in the
$L^2$ resp. $H^1$-norms) between the numerical solutions obtained with
one of the three methods and the exact solution, and this, as a
function of the parameter $\eps$ and for several mesh-sizes. In Table
\ref{tab:error}, we specified the error values for one fixed grid and
several $\eps$-values. One observes that the Singular Perturbation
finite element approximation is accurate only for $\varepsilon$ bigger
than some critical value $\varepsilon _P$ while the MM-scheme and the
DB-scheme are both accurate independently on $\eps$ and
give similar results. \\
The order of convergence for all three methods is three in the
$L^2$-norm and two in the $H^1$-norm, which is an optimal result for
$\mathbb Q_2$ finite elements. The convergence of the MM-scheme is
presented on Tables
\ref{tab:conv_e1} and \ref{tab:conv_e-100}. \\
Furthermore the condition number of the MM-scheme is bounded by an
$\eps$ independent constant and coincides with the condition number of
DB-scheme for $\eps <
0.1$. See Figure \ref{fig:cond} for the plots.
\begin{table}
  \centering
  \begin{tabular}{|c||c|c|c|c|c|}
    \hline\rule{0pt}{2.5ex}
    method & \# rows & \# non zero & time & $L^{2}$-error & $H^{1}$-error\\
    \hline
    \hline\rule{0pt}{2.5ex}
    MM &
    $20\times 10^{3}$ &
    $623\times 10^{3}$ &
    $1.156$ s &
    $1.19\times 10^{-6}$ &
    $1.47\times 10^{-4}$
    \\
    \hline\rule{0pt}{2.5ex}
    DB &
    $50\times 10^{3}$ &
    $1563\times 10^{3}$ &
    $7.405$ s &
    $1.19\times 10^{-6}$ &
    $1.47\times 10^{-4}$
    \\
    \hline\rule{0pt}{2.5ex}
    P &
    $10\times 10^{3}$ &
    $255\times 10^{3}$ &
    $0.501$ s &
    $1.19\times 10^{-6}$ &
    $1.47\times 10^{-4}$
    \\
    \hline
  \end{tabular}
  \caption{Comparison between the Micro-Macro AP-scheme (MM),
    the Duality-Based AP-scheme (DB) and the
    Singular Perturbation model (P) for 
    $h=0.01$ (100 mesh points in each direction) and fixed $\varepsilon =
    10^{-6}$: matrix size, number of nonzero elements, average
    computational time and relative error in $L^{2}$ and $H^{1}$ norms.}
  \label{tab:time}
\end{table}

\begin{table}
  \centering
  \begin{tabular}{|c||c|c|c|c|}
    \hline\rule{0pt}{2.5ex}
    $h$ & $L^{2}$-error in $u$ & $H^{1}$-error in $u$& $L^{2}$-error
    in $q$& $H^{1}$-error in $q$\\
    \hline
    \hline\rule{0pt}{2.5ex}
    0.1 &
    $5.7\times 10^{-3}$ &
    $1.86\times 10^{-1}$ &
    $5.7\times 10^{-3}$ &
    $1.86\times 10^{-1}$ 
    \\
    \hline\rule{0pt}{2.5ex}
    0.05 &
    $7.3\times 10^{-4}$ &
    $4.7\times 10^{-2}$ &
    $7.3\times 10^{-4}$ &
    $4.7\times 10^{-2}$ 
    \\
    \hline\rule{0pt}{2.5ex}
    0.025 &
    $9.1\times 10^{-5}$ &
    $1.18\times 10^{-2}$ &
    $9.1\times 10^{-5}$ &
    $1.18\times 10^{-2}$ 
    \\
    \hline\rule{0pt}{2.5ex}
    0.0125 &
    $1.14\times 10^{-5}$ &
    $2.96\times 10^{-3}$ &
    $1.14\times 10^{-5}$ &
    $2.96\times 10^{-3}$ 
    \\
    \hline\rule{0pt}{2.5ex}
    0.00625 &
    $1.43\times 10^{-6}$ &
    $7.4 \times 10^{-4}$ &
    $1.43\times 10^{-6}$ &
    $7.4 \times 10^{-4}$ 
    \\
    \hline\rule{0pt}{2.5ex}
    0.003125 &
    $1.78\times 10^{-7}$ &
    $1.85\times 10^{-4}$ &
    $1.78\times 10^{-7}$ &
    $1.85\times 10^{-4}$ 
    \\
    \hline\rule{0pt}{2.5ex}
    0.0015625 &
    $2.23\times 10^{-8}$ &
    $4.6\times 10^{-5}$ &
    $2.23\times 10^{-8}$ &
    $4.6\times 10^{-5}$ 
    \\
    \hline
  \end{tabular}
  \caption{The absolute error of $u$ and $q$ in $L^{2}$ and $H^{1}$-norms 
    for different mesh sizes and $\eps =1$. Used discretization
    method:  Micro-Macro scheme (MM).
  }
  \label{tab:conv_e1}
\end{table}
\begin{table}
  \centering
  \begin{tabular}{|c||c|c|c|c|}
    \hline\rule{0pt}{2.5ex}
    $h$ & $L^{2}$-error in $u$ & $H^{1}$-error in $u$& $L^{2}$-error
    in $q$& $H^{1}$-error in $q$\\
    \hline
    \hline\rule{0pt}{2.5ex}
    0.1 &
    $1.00\times 10^{-3}$ &
    $3.25\times 10^{-2}$ &
    $5.7\times 10^{-3}$ &
    $1.86\times 10^{-1}$ 
    \\
    \hline\rule{0pt}{2.5ex}
    0.05 &
    $1.26\times 10^{-4}$ &
    $8.2\times 10^{-3}$ &
    $7.3\times 10^{-4}$ &
    $4.7\times 10^{-2}$ 
    \\
    \hline\rule{0pt}{2.5ex}
    0.025 &
    $1.58\times 10^{-5}$ &
    $2.04\times 10^{-3}$ &
    $9.1\times 10^{-5}$ &
    $1.18\times 10^{-2}$
    \\
    \hline\rule{0pt}{2.5ex}
    0.0125 &
    $1.97\times 10^{-6}$ &
    $5.1\times 10^{-4}$ &
    $1.14\times 10^{-5}$ &
    $2.96\times 10^{-3}$
    \\
    \hline\rule{0pt}{2.5ex}
    0.00625 &
    $2.46\times 10^{-7}$ &
    $1.28\times 10^{-4}$ &
    $1.43\times 10^{-6}$ &
    $7.4 \times 10^{-4}$
    \\
    \hline\rule{0pt}{2.5ex}
    0.003125 &
    $3.1 \times 10^{-8}$ &
    $3.2 \times 10^{-5}$ &
    $2.59\times 10^{-5}$ &
    $2.96\times 10^{-2}$
    \\
    \hline\rule{0pt}{2.5ex}
    0.0015625 &
    $4.1\times 10^{-9}$ &
    $8.0\times 10^{-6}$ &
    $4.0\times 10^{-4}$ &
    $9.1\times 10^{-1}$ 
    \\
    \hline
  \end{tabular}
  \caption{The absolute error of $u$ and $q$ in $L^{2}$ and $H^{1}$-norms 
    for different mesh sizes and $\eps =10^{-100}$. Used
    discretization method: Micro-Macro scheme (MM). 
  }
  \label{tab:conv_e-100}
\end{table}

\def\xxxa{0.45\textwidth}
\begin{figure}[!ht] 
  \centering
  \subfigure[mesh size: $100\times 100$ points]
  {\includegraphics[angle=-90,width=\xxxa]{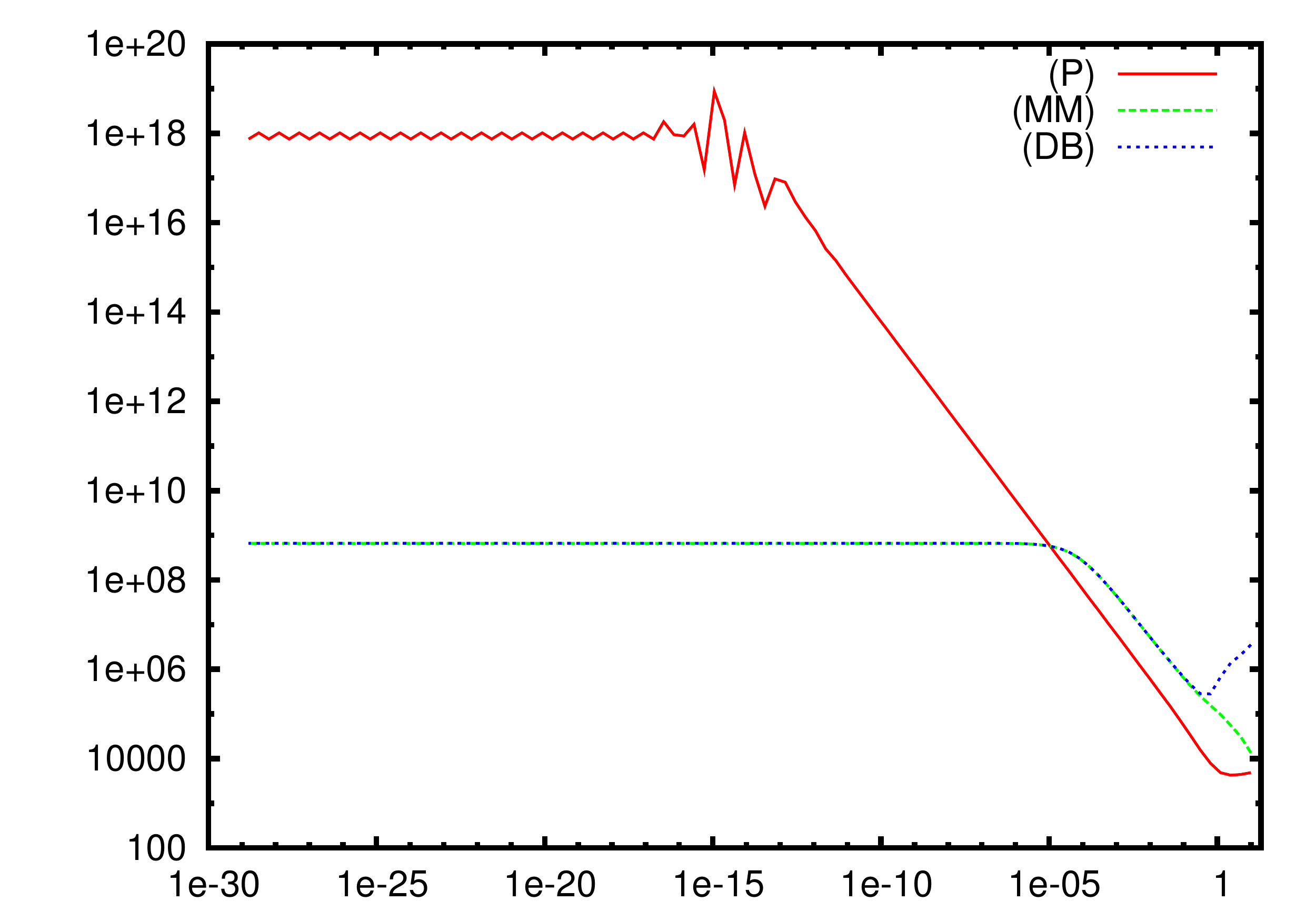}}
  \subfigure[mesh size: $400\times 400$ points]
  {\includegraphics[angle=-90,width=\xxxa]{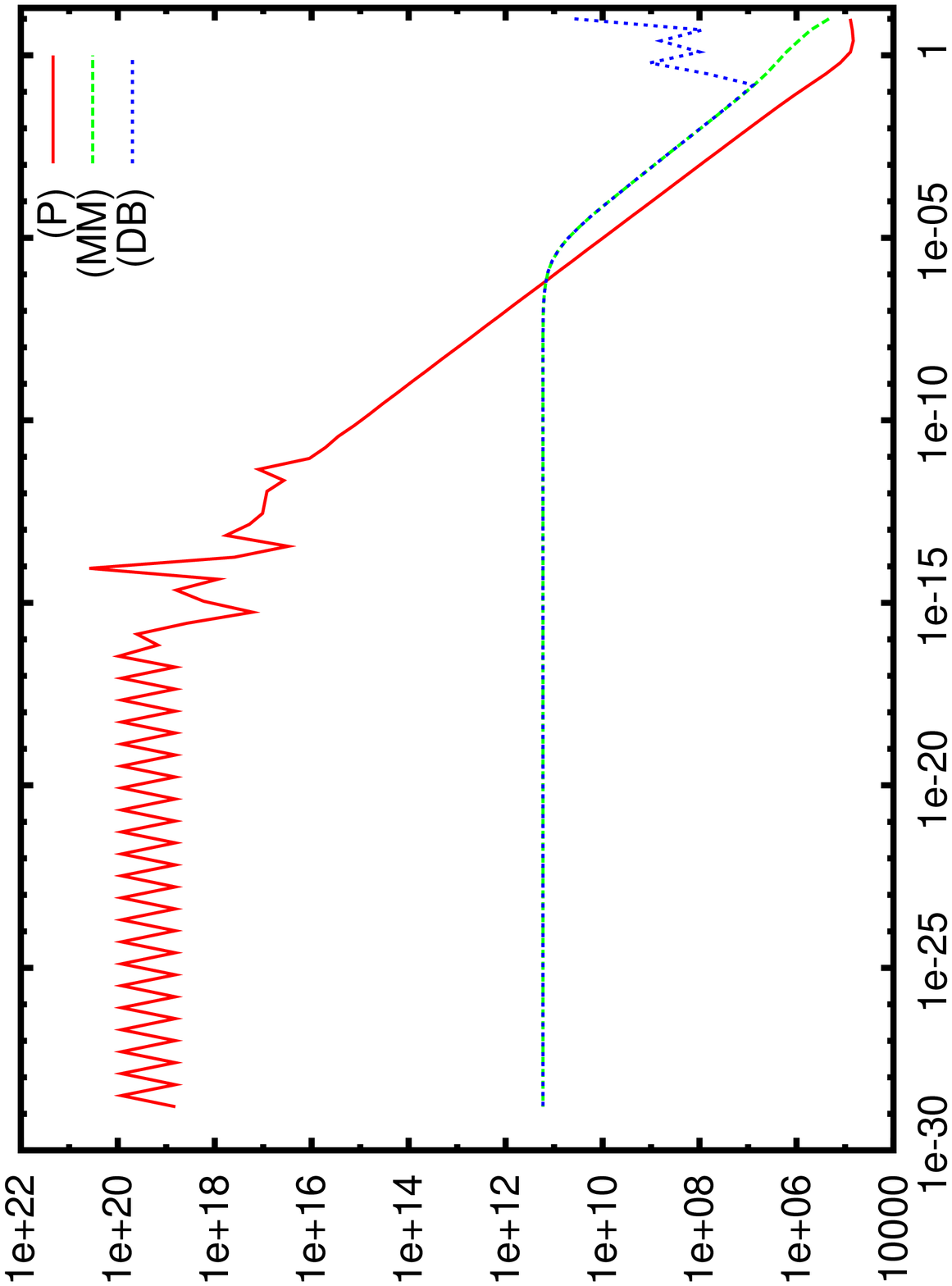}}

  \caption{Condition number estimate provided by the MUMPS solver for
    the (MM), (DB) and (P) schemes.}
  \label{fig:cond}
\end{figure}

Since both Asymptotic Preserving models (DB/MM) give the same accuracy in this
test case, it is worthwhile to compare the computational resources
required to obtain this same results. The computational time and the
matrix sizes required to solve the problem for fixed $\varepsilon$ and
$h$ are given in the Table \ref{tab:time}. As expected, the MM-scheme is more
efficient than the DB-approach. The average computational
time of the MM-scheme is approximately $6.4$ times smaller as compared
to the DB-simulation time. It should be noted that the
Singular Perturbation model is approximately $2.3$ faster than the new
MM-method. However, the applicability of the Singular Perturbation scheme is
limited to sufficiently large values of $\eps$.

\subsubsection{2D test case, constant $\varepsilon$, non-uniform and non-aligned $b$-field}\label{sec:tc_bv}

We now focus our attention on the original feature of the here
introduced numerical method, namely its ability to treat nonuniform
$b$ fields. In this section we present numerical simulations performed
for a variable field $b$.

\def\xxxa{0.45\textwidth}
\begin{figure}[!ht] 
  \centering
  \subfigure[$L^{2}$ error for a grid with $50\times 50$ points.]
  {\includegraphics[angle=-90,width=\xxxa]{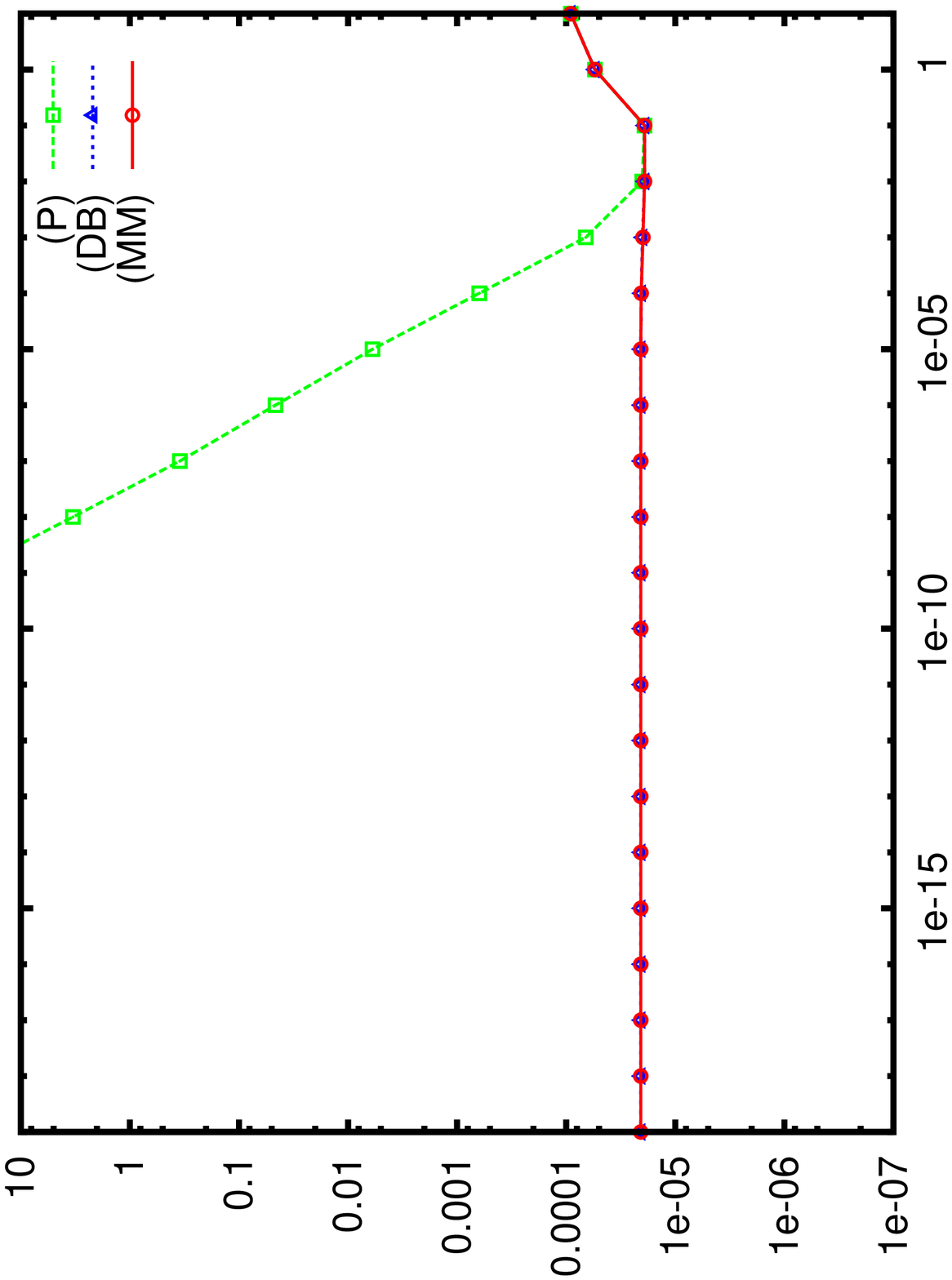}}
  \subfigure[$H^{1}$ error for a grid with $50\times 50$ points.]
  {\includegraphics[angle=-90,width=\xxxa]{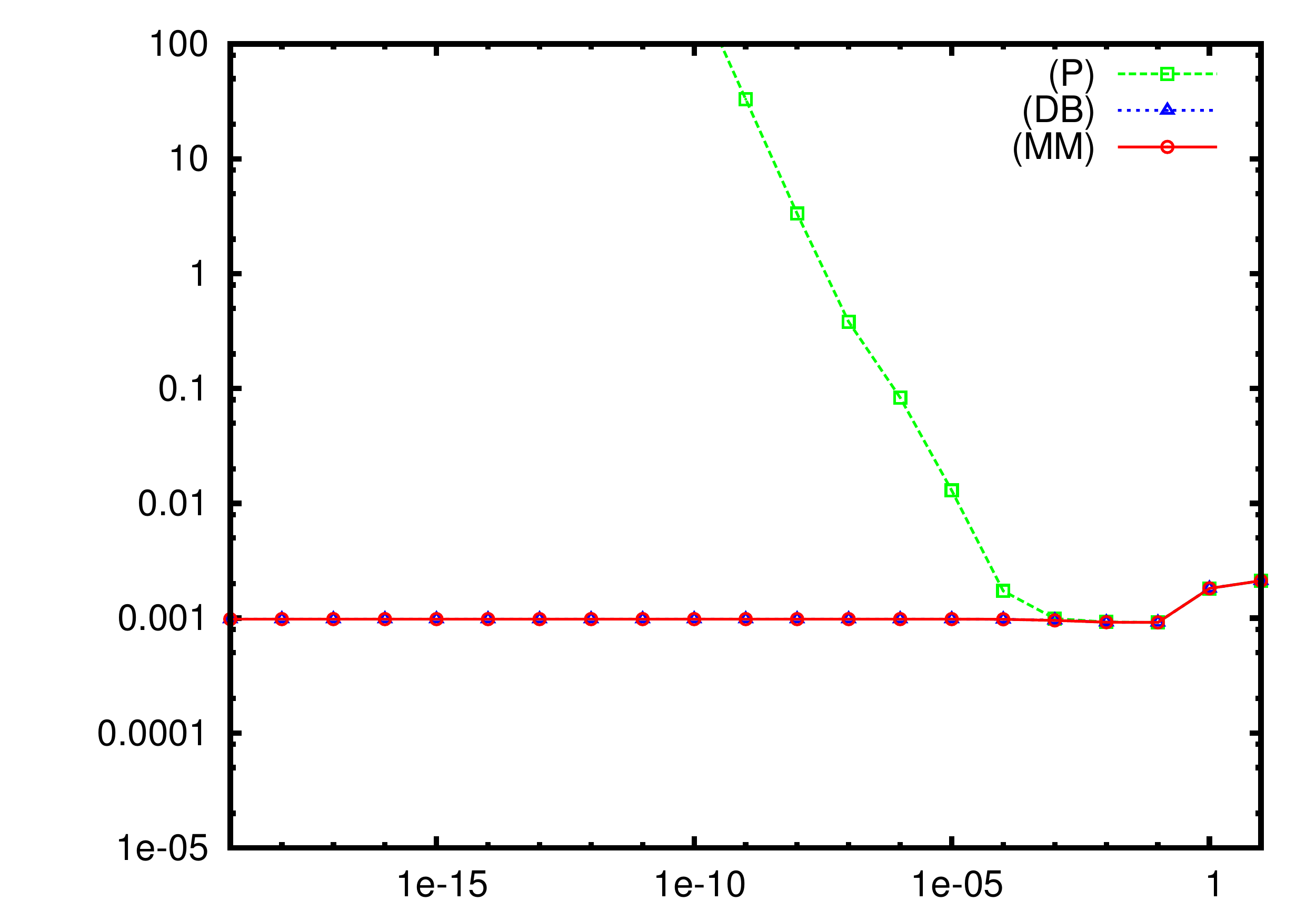}}

  \subfigure[$L^{2}$ error for a grid with $100\times 100$ points.]
  {\includegraphics[angle=-90,width=\xxxa]{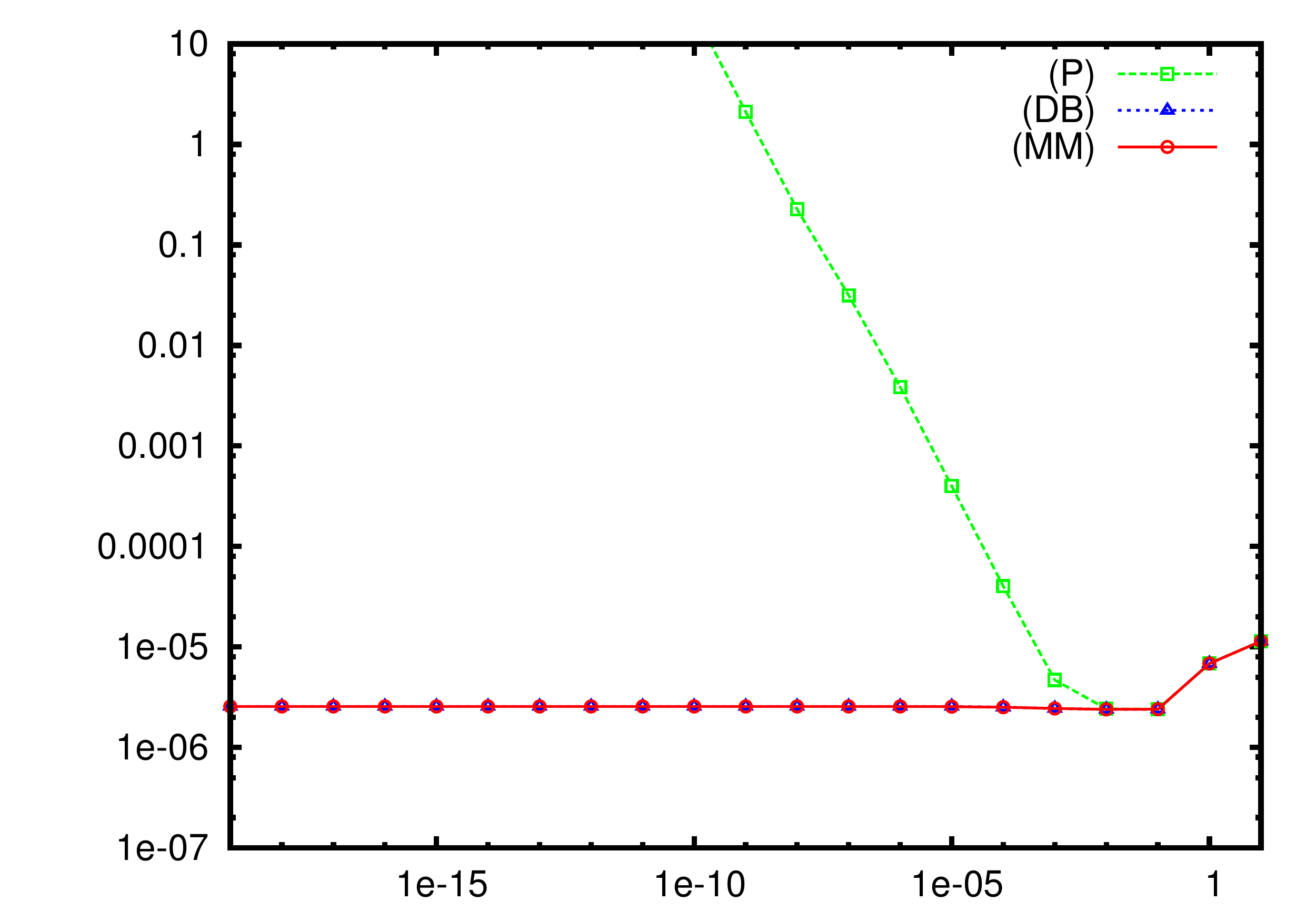}}
  \subfigure[$H^{1}$ error for a grid with $100\times 100$ points.]
  {\includegraphics[angle=-90,width=\xxxa]{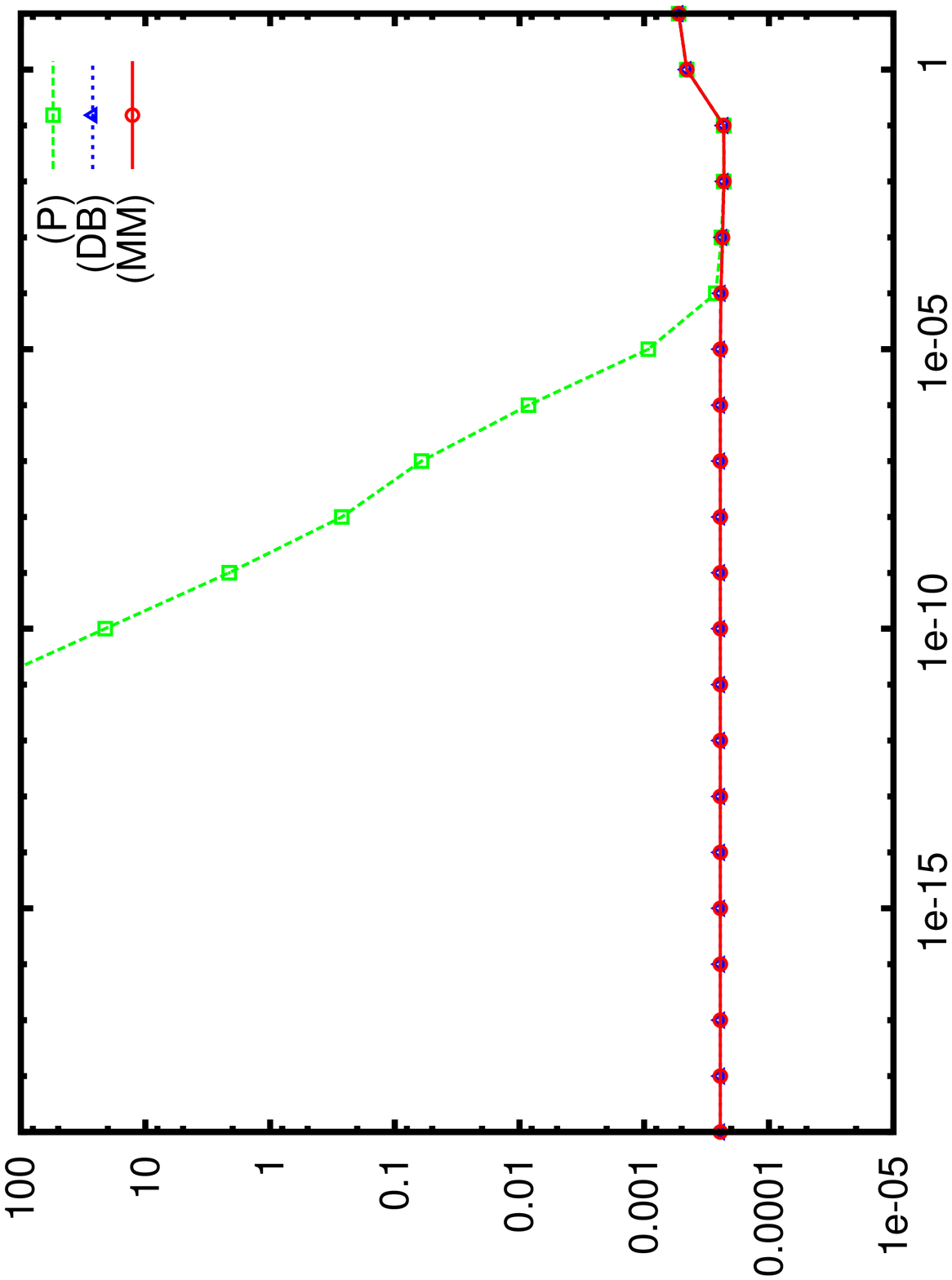}}

  \subfigure[$L^{2}$ error for a grid with $200\times 200$ points.]
  {\includegraphics[angle=-90,width=\xxxa]{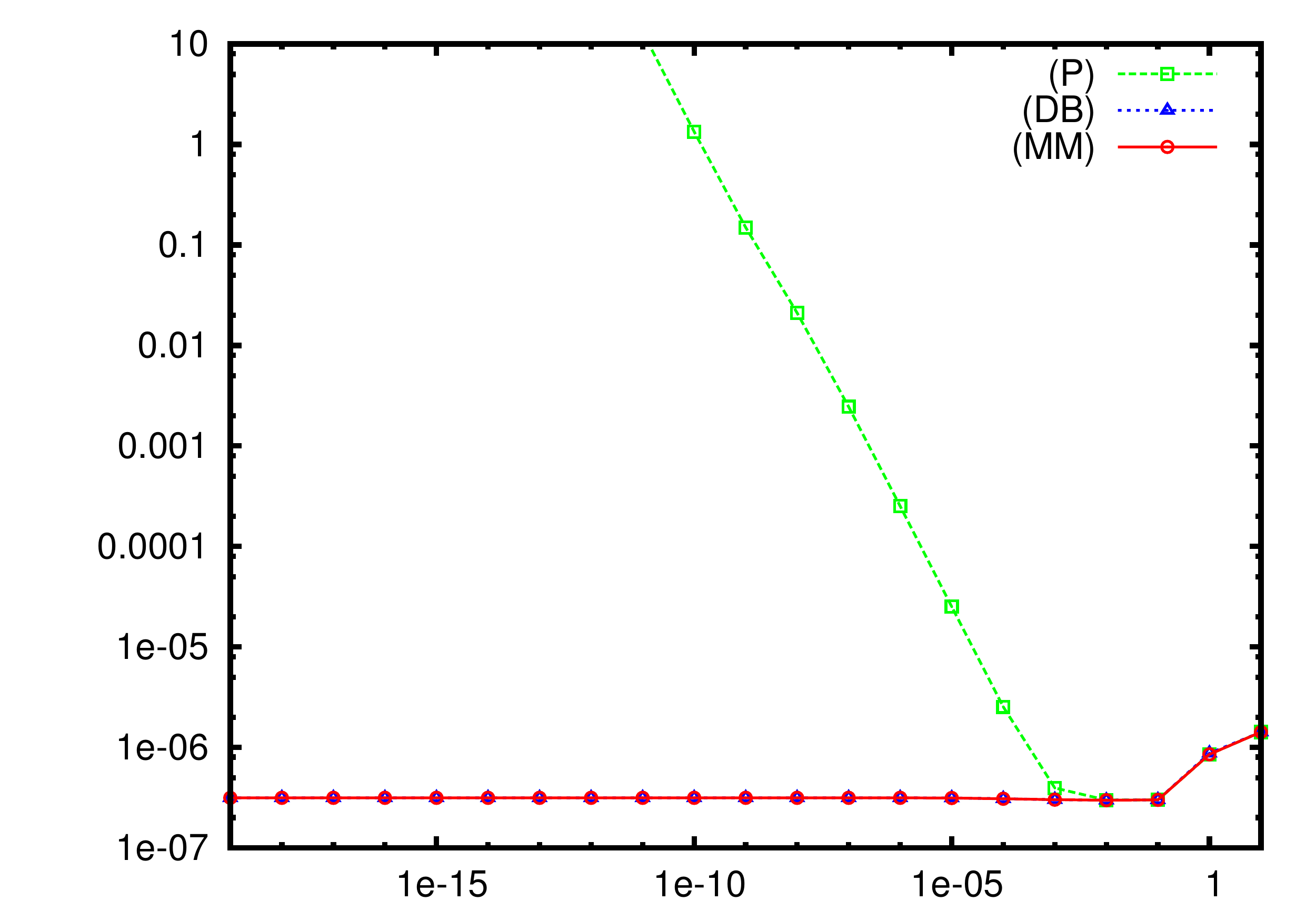}}
  \subfigure[$H^{1}$ error for a grid with $200\times 200$ points.]
  {\includegraphics[angle=-90,width=\xxxa]{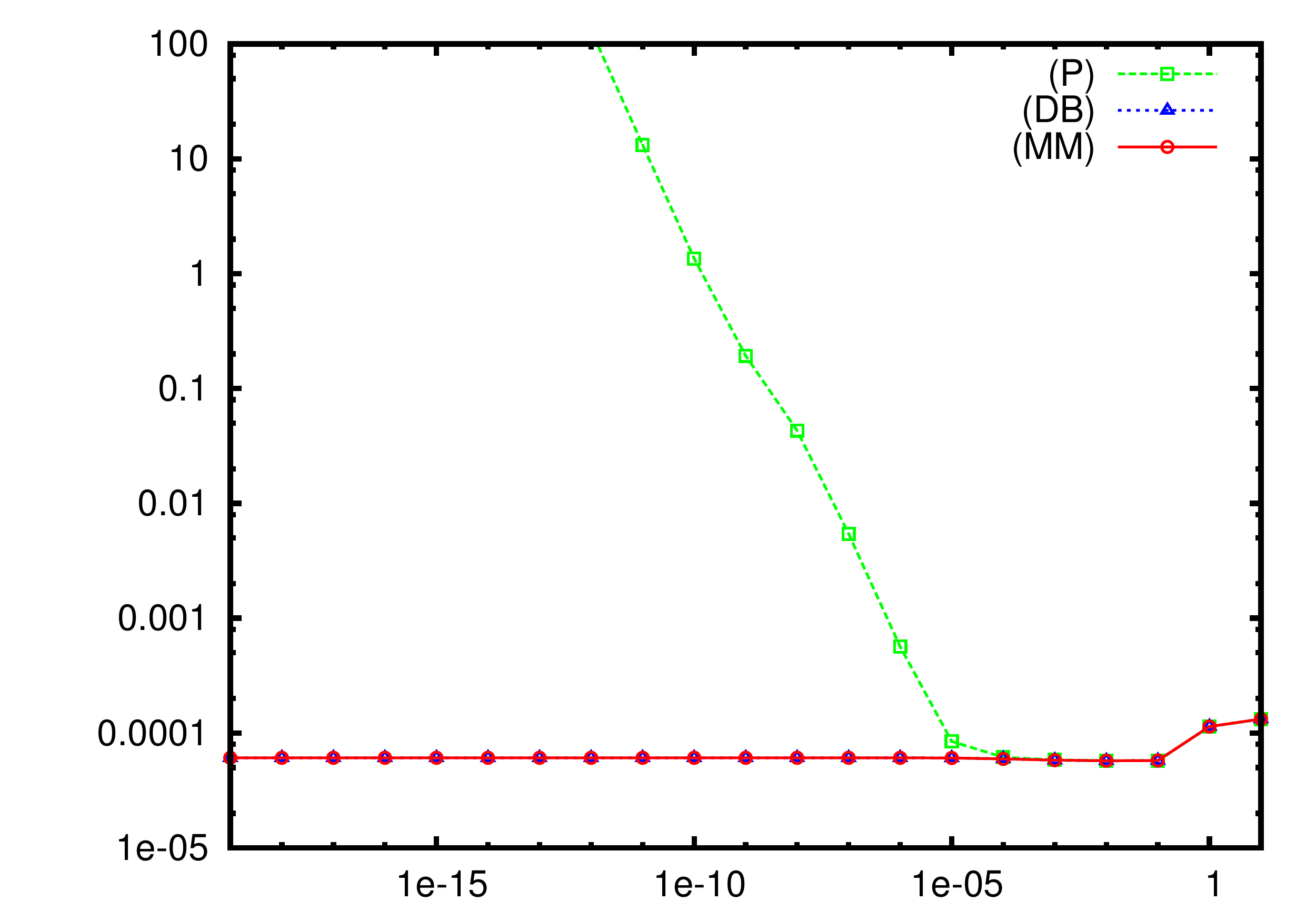}}

  \caption{Relative $L^{2}$ (left column) and $H^{1}$ (right column)
    errors between the exact solution $u^{\varepsilon }$ and the
    computed solution $u_M$ (MM), $u_D$ (DB), $u_P$ (P) for
    the test case with variable $b$. Plotted are the errors as a
    function of the small parameter $\varepsilon $, for three different
    meshes.}
  \label{fig:error_ec_bv}
\end{figure}

First, let us construct a numerical test case. Finding an analytical
solution for an arbitrary $b$-field presents a considerable
difficulty. In the previous paper \cite{DDLNN}, we presented a way to
find such a solution. Let us recall briefly how to do.  First, we
choose a limit solution
\begin{gather}
  u^{0} = \sin \left(\pi y +\alpha (y^2-y)\cos (\pi x) \right)
  \label{eq:J79a},
\end{gather}
where $\alpha $ is a numerical constant aimed to control the
variations of $b$. For $\alpha =0$, the limit solution of the previous
section is obtained. The limit solution for $\alpha =2$ is shown in
Figure \ref{fig:limit}.
\begin{figure}[t] 
  \centering
  \def\xxxa{0.45\textwidth}
  \includegraphics[width=\xxxa]{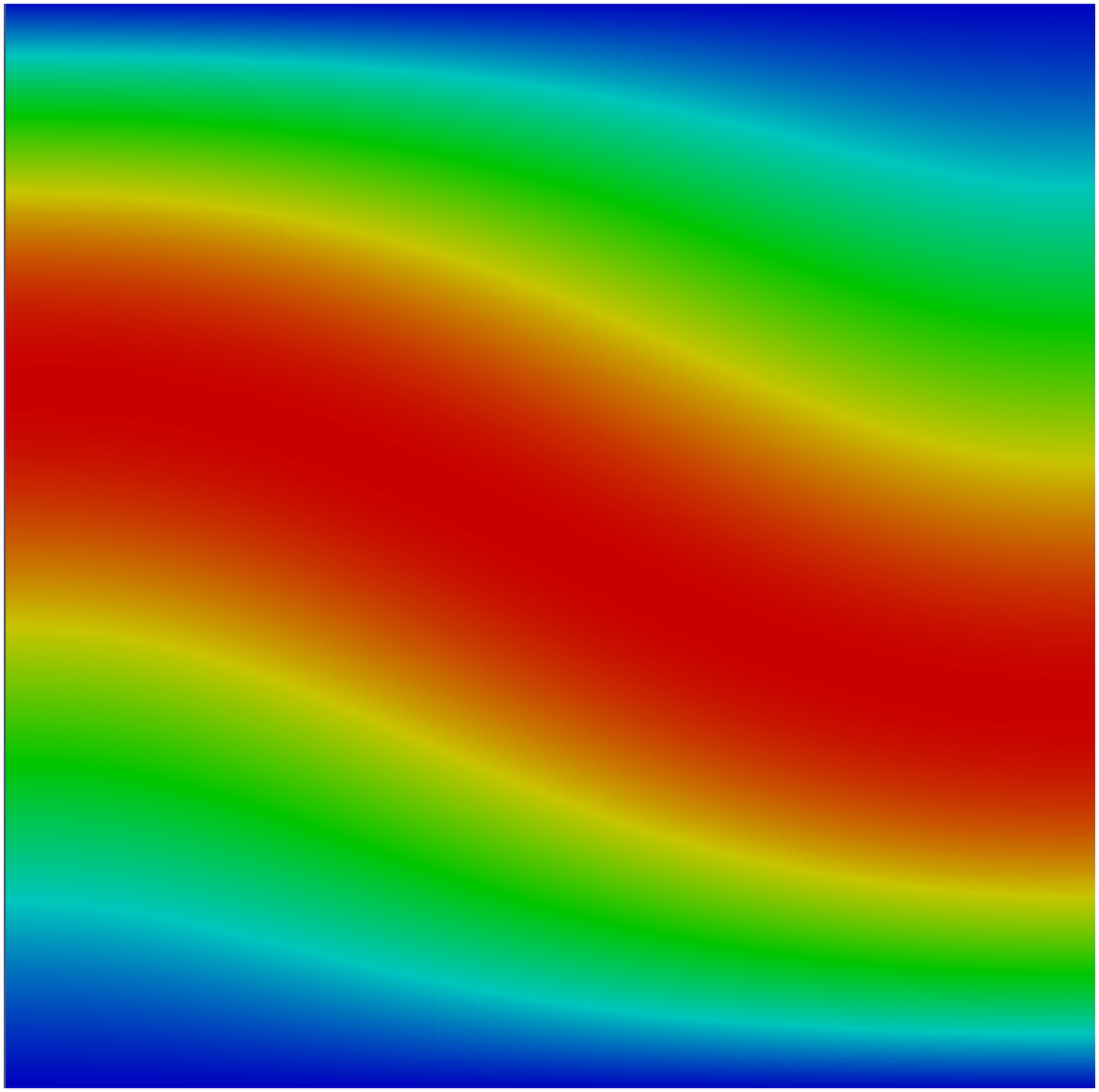}
  \caption{The limit solution for the test case with variable $b$.}
  \label{fig:limit}
\end{figure}
Since $u^{0}$ is a limit solution, it is constant along the $b$
field lines. Therefore we can determine the $b$ field using the
following implication
\begin{gather}
  \nabla_{\parallel} u^{0} = 0 \quad \Rightarrow \quad
  b_x \frac{\partial u^{0}}{\partial x} +
  b_y \frac{\partial u^{0}}{\partial y} = 0\,,
  \label{eq:J89a}
\end{gather}
which yields for example
\begin{gather}
  b = \frac{B}{|B|}\, , \quad
  B =
  \left(
    \begin{array}{c}
      \alpha  (2y-1) \cos (\pi x) + \pi \\
      \pi \alpha  (y^2-y) \sin (\pi x)
    \end{array}
  \right)
  \label{eq:J99a}\,\quad. 
\end{gather}
Note that the field $B$, constructed in this way, satisfies
$\text{div} B = 0$, which is an important property in the framework of
plasma simulations. Furthermore, we have $B \neq 0$ in the
computational domain. Now, we choose $u^\varepsilon $ to be a
function that converges, as $\varepsilon \rightarrow 0$, to the limit
solution $u^{0}$, for example
\begin{gather*}
  u^{\varepsilon } = \sin \left(\pi y +\alpha (y^2-y)\cos (\pi
    x) \right) + \varepsilon \cos \left( 2\pi x\right) \sin \left(\pi
    y \right)
\end{gather*}
Finally, the force term is calculated, using the equation, i.e.
\begin{gather}
  f = - \nabla_\perp \cdot (A_\perp \nabla_\perp u^{\varepsilon })
  - \frac{1}{\varepsilon }\nabla_\parallel \cdot (A_\parallel \nabla_\parallel u^{\varepsilon })
  \nonumber.
\end{gather}

The simulation results are presented on Figure \ref{fig:error_ec_bv}.
Similarly to the previous test case, both, the Micro-Macro Asymptotic
Preserving approach and the Duality-Based Asymptotic Preserving
reformulation, give the same accuracy. Both methods converge with the
optimal rate in both $L^{2}$ and $H_1$ norms, independently of
$\eps$. The discretization of the Singular-Perturbation model is again
limited. The critical value $\eps_P$ is now of the order of $10^{-2}$
and seems mesh independent while in the uniform $b$ case this value
ranged from $10^{-9}$ to $10^{-5}$ depending on the mesh size.

It is worthwile to investigate the influence of the variations of the
$b$-field on the accuracy of the solution. It is particularly
interesting to find out the minimal number of mesh nodes per
characteristic length of $b$-variations required to obtain an
acceptable solution. For this, let us proceed as in the previous paper
\cite{DDLNN} and define a variant of the test case presented above.

Let $b = B/|B|$, with
\begin{gather}
  B =
  \left(
    \begin{array}{c}
      \alpha  (2y-1) \cos (m\pi x) + \pi \\
      m\pi \alpha  (y^2-y) \sin (m\pi x)
    \end{array}
  \right)\, ,
  \label{eq:Ji0a}
\end{gather}
$m$ being an integer. The limit solution $\phi^0$ and $\phi^{\varepsilon}$ are
chosen to be
\begin{gather}
  \phi^{0} = \sin \left(\pi y +\alpha (y^2-y)\cos (m\pi x) \right)
  \label{eq:Jj0a}, \\
  \phi^{\varepsilon } = \sin \left(\pi y +\alpha (y^2-y)\cos (m\pi
    x) \right) + \varepsilon \cos \left( 2\pi x\right) \sin \left(\pi
    y \right)
  \label{eq:Jk0a}.
\end{gather}
As in \cite{DDLNN}, we perform numerical simulations on a fixed $400
\times 400$ grid ($h=0.0025$) and vary $m$. We obtain the same
results: the relative error in the $L^{2}$-norm, defined as
$\frac{||u^{\varepsilon} - u_M||_{L^{2}(\Omega)}}{||u_M||_{L^{2}(\Omega)}}$, is below $0.01$
for all tested values of $1 \leq m \leq 50$. The relative $H^1$-error
$\frac{||u^{\varepsilon} - u_M||_{H^{1}(\Omega)}}{||u_M||_{H^{1}(\Omega)}}$ exceeds the
critical value for $m > 25$. For $\varepsilon = 10^{-20}$ the maximal
$m$ for which the error is acceptable in both norms is $20$. The
minimal number of mesh points per period of $b$ variations is 40 in
the worst case, in order to obtain an $1\%$ relative error. The
results are summarized on Figures \ref{fig:bp_var}. The results prove
that the MM-scheme is precise even for strongly oscillating fields for
relatively small mesh sizes.

\def\xxxa{0.45\textwidth}
\begin{figure}[!ht]
  \centering
  \subfigure[$\varepsilon = 1$]
  {\includegraphics[angle=-90,width=\xxxa]{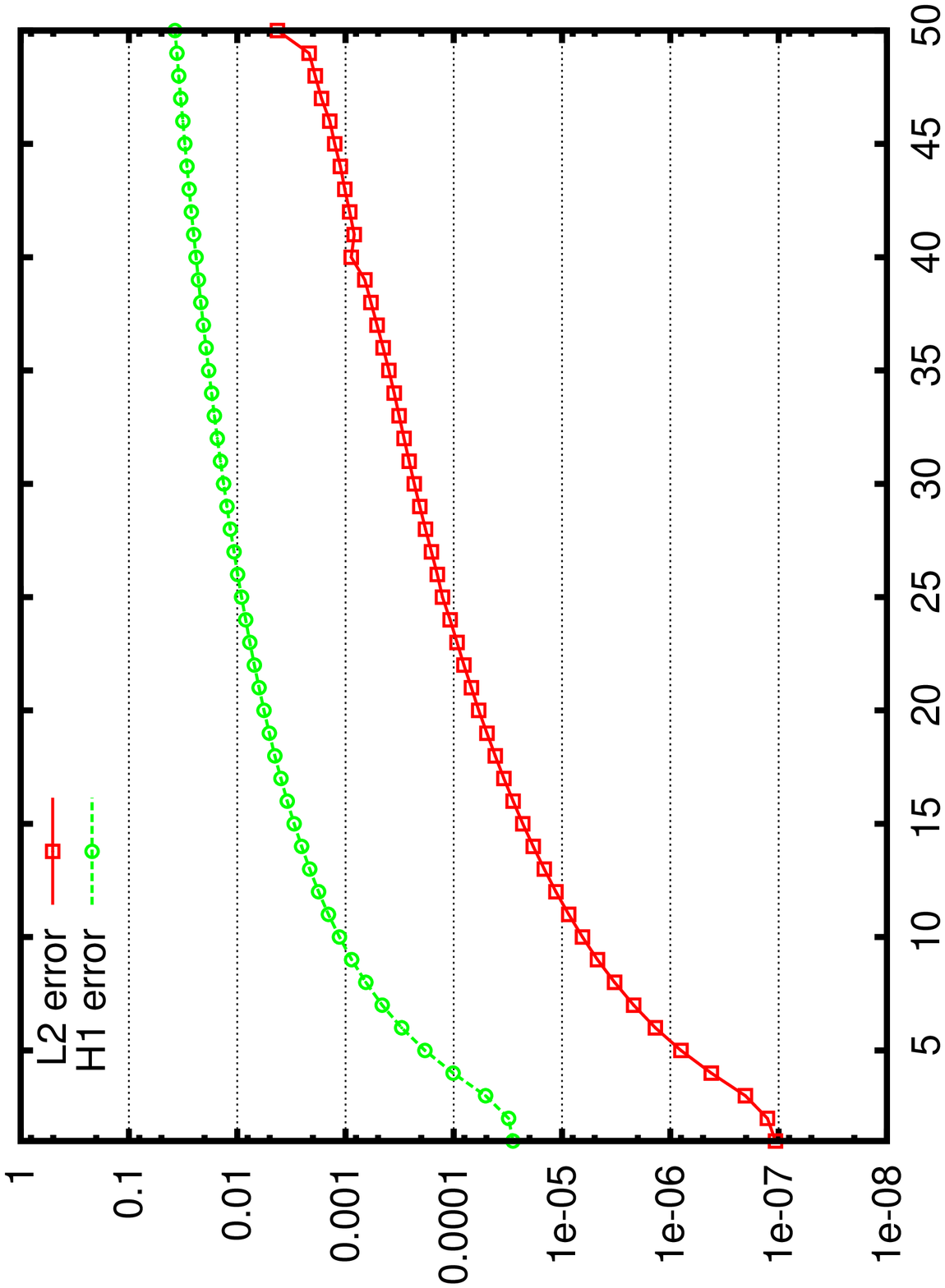}}
  \subfigure[$\varepsilon = 10^{-20}$]
  {\includegraphics[angle=-90,width=\xxxa]{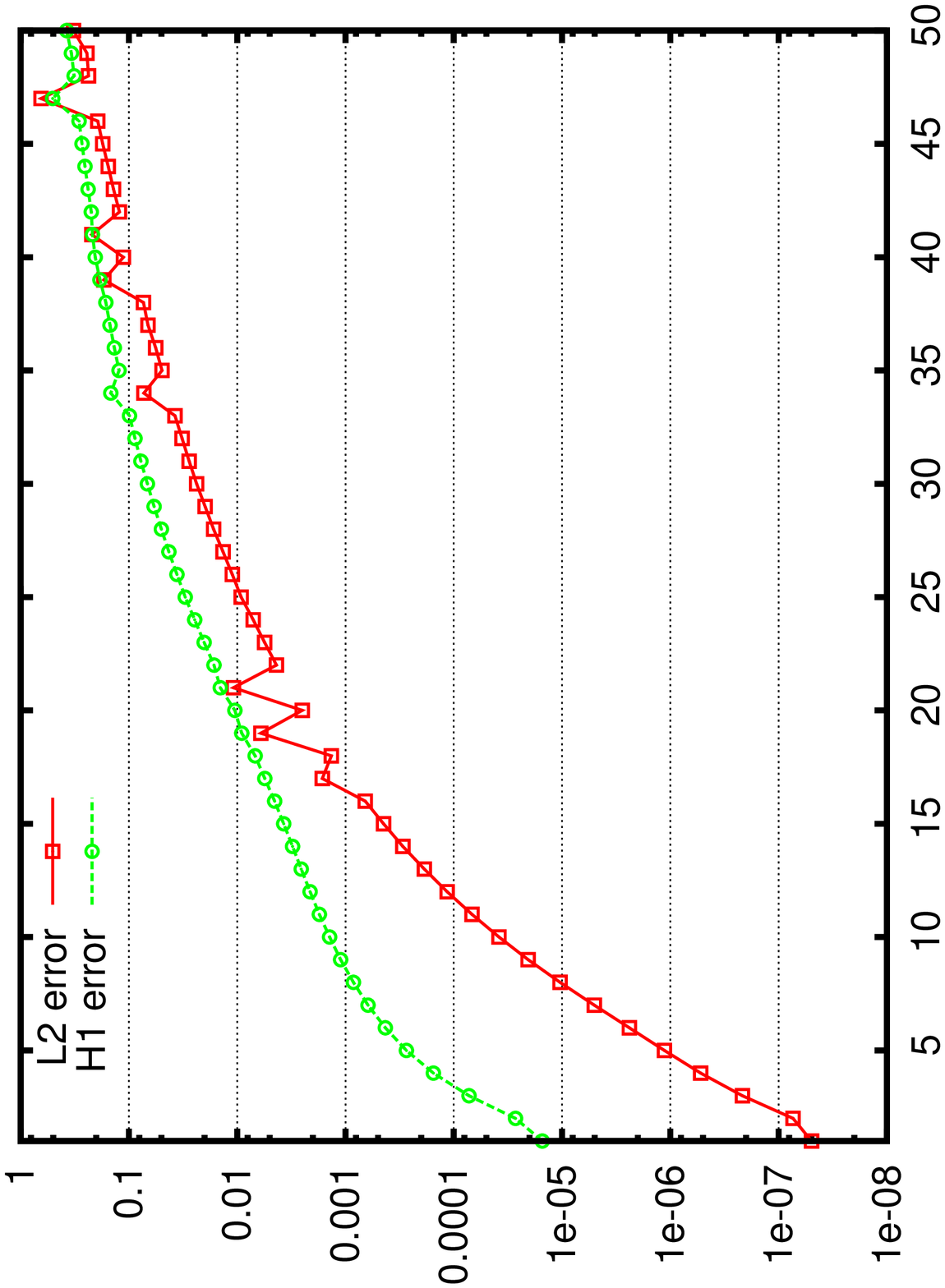}}

  \caption{Relative $L^{2}$ and $H^{1}$ errors between the exact
    solution $\phi^{\varepsilon }$ and the computed solution $\phi _M$
    (MM) for $h=0.0025$ (400 points in each direction) as a function of
    $m$ and for $\varepsilon = 1$ respectively  $10^{-20}$.}
  \label{fig:bp_var}
\end{figure}

\section{Generalization to the variable $\eps$ case} \label{GENE}

In this section we introduce a generalization of the Micro-Macro
scheme to the case of an anisotropy intensity $\eps$ variable in
space.  The anisotropy parameter $\eps$ is now a function of space
coordinates and takes values (within the computational domain) in the
interval $[\eps_{min},1]$. This setting requires a novel approach to
the problem since the original decomposition presented in \ref{sec:AP}
would yield a function $q$ that could take values of $O(1)$ in the
subdomain where $\eps$ is big and of $O(1/\eps_{min})$ in the
subdomain where $\eps$ is close to $\eps_{min}$.  To understand this,
let us consider the simple test of Section \ref{sec:tc_bc},
i.e. $u^\eps=\sin \left(\pi y \right) + \varepsilon \cos \left( 2\pi
  x\right) \sin \left(\pi y \right) $ with $b$ constant and aligned
with the $x$-axis. Let $\eps = 1$ for $x<0.5$ and $\eps = \eps_{min}$
for $x>0.5$.  In the decomposition $u = p + \eps q$ the function $p$
is constant in the direction of the anisotropy field $b$ and
$p|_{\Gamma_{in}} = \phi|_{\Gamma_{in}}$. In the mentioned test case
$p = 2 \sin \left(\pi y \right) $. For $x<0.5$ we obtain $q = \cos
\left(2\pi x\right) \sin \left(\pi y \right) - \sin \left(\pi y
\right) = O(1)$. However, for $x > 0.5$ we get $q = \cos \left(2\pi
  x\right) \sin \left(\pi y \right) - \frac{1}{\eps_{min}}\sin
\left(\pi y \right) = O(1/\eps_{min})$. Therefore the decomposition
$u=p+\eps q$ is not suitable in the case of variable $\eps$, the
function $q$ being no more bounded.

 The remedy to this deficiency is surprisingly simple. Instead of
introducing the decomposition $u=p+\eps q$ it suffices to define $q$
via the following relation 
\begin{gather}
  \nabla_\parallel q = \frac{1}{\eps} \nabla_\parallel u
  \label{eq:Joab}
\end{gather}
with again $q=0$ on $\Gamma_{in}$. This leads to almost the same
system as introduced in \ref{sec:AP}. The only change is that the variable $\eps(x)$ should be now inside the integral:
\begin{equation}
\left\{ 
\begin{array}{ll}
\ds \int_{\Omega }( A_{\perp }\nabla _{\perp }u^\eps)\cdot \nabla _{\perp
}v\,dx+\int_{\Omega } A_{||} \nabla _{||}q^\eps\cdot \nabla _{||}v\,dx=\int_{\Omega }fv\,dx, & \quad
\forall v\in {\cal V} \\[3mm]
\ds \int_{\Omega }A_{||} \nabla {}_{||}u^\eps\cdot \nabla {}_{||}w\,dx- \int_{\Omega }\varepsilon
A_{||} \nabla _{||}q^\eps\cdot \nabla _{||}w\,dx=0, & \quad \forall w\in {\cal L}\,.
\end{array}%
\right.  \label{PaEvar}
\end{equation}

\def\xxxa{0.45\textwidth}
\begin{figure}[!ht] 
  \centering
  \subfigure[$L^{2}$ error for a grid with $50\times 50$ points.]
  {\includegraphics[angle=-90,width=\xxxa]{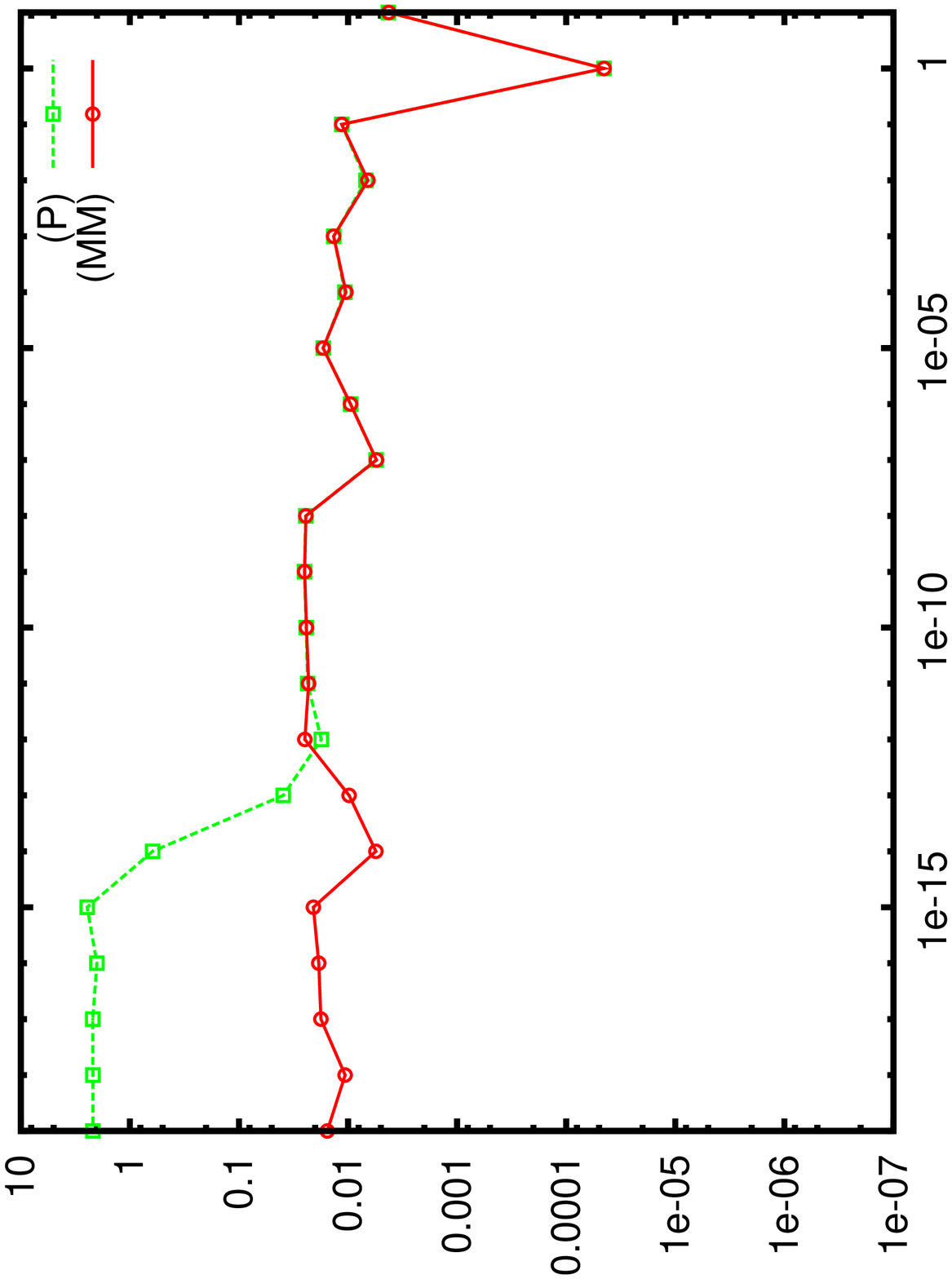}}
  \subfigure[$H^{1}$ error for a grid with $50\times 50$ points.]
  {\includegraphics[angle=-90,width=\xxxa]{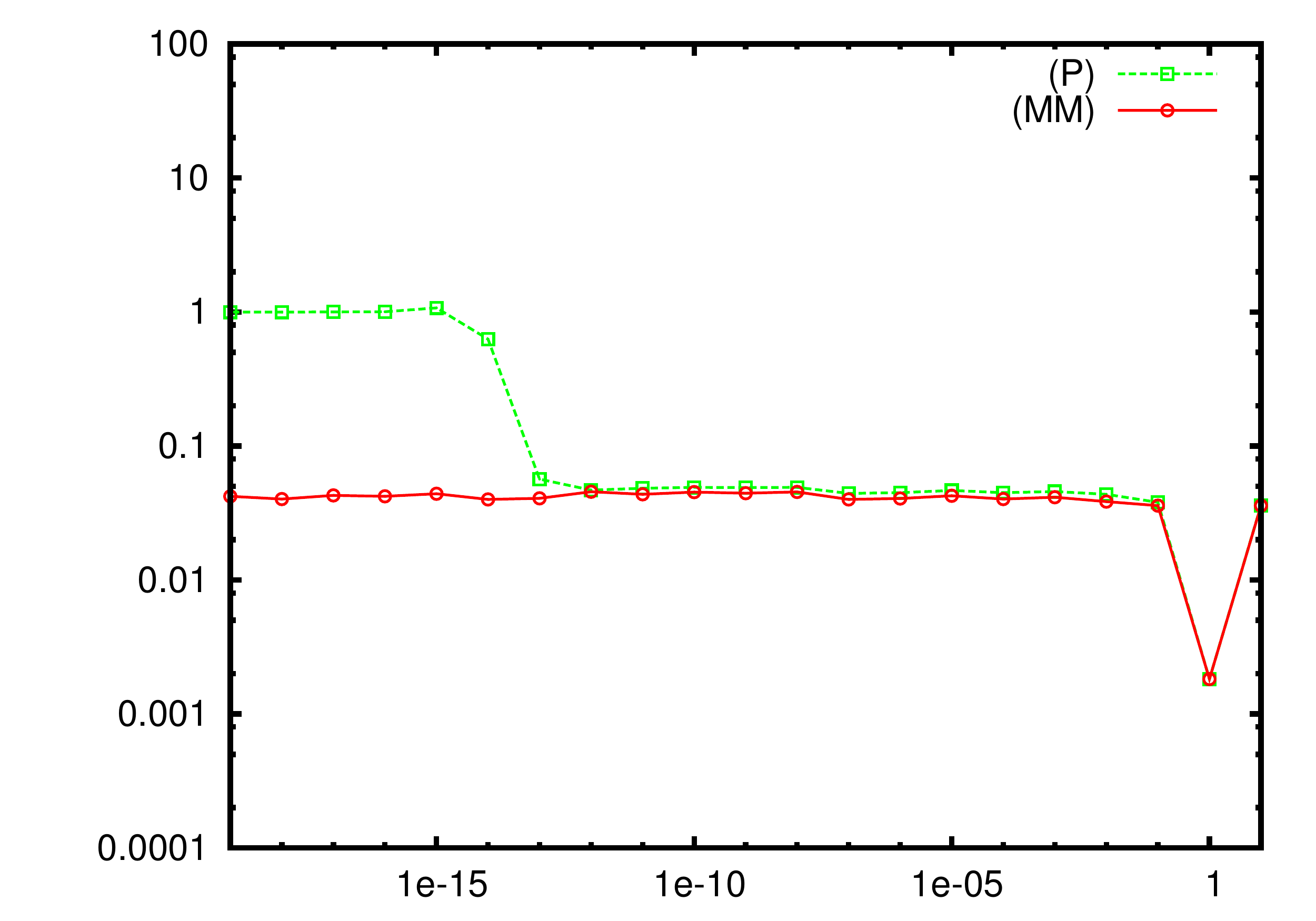}}

  \subfigure[$L^{2}$ error for a grid with $100\times 100$ points.]
  {\includegraphics[angle=-90,width=\xxxa]{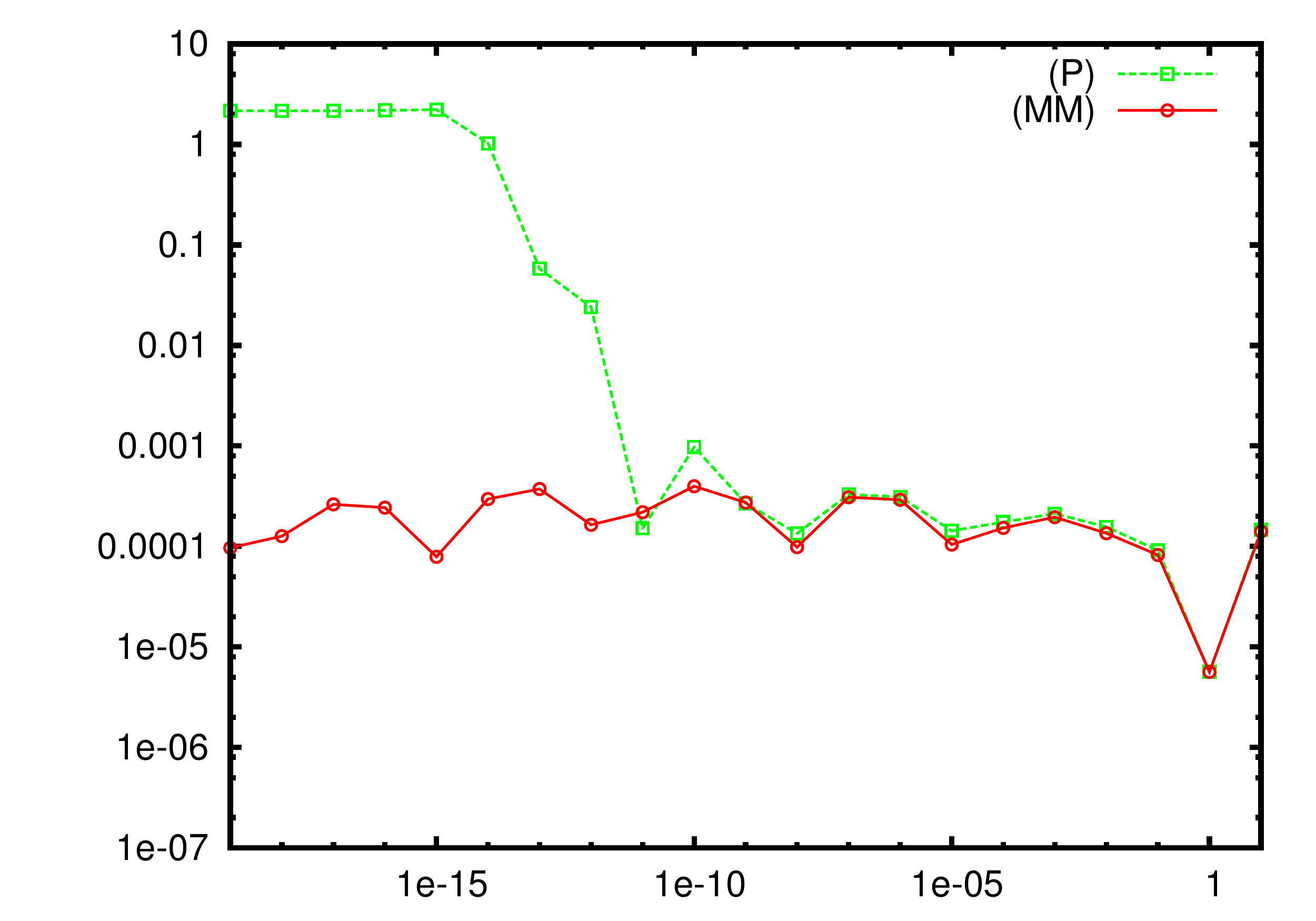}}
  \subfigure[$H^{1}$ error for a grid with $100\times 100$ points.]
  {\includegraphics[angle=-90,width=\xxxa]{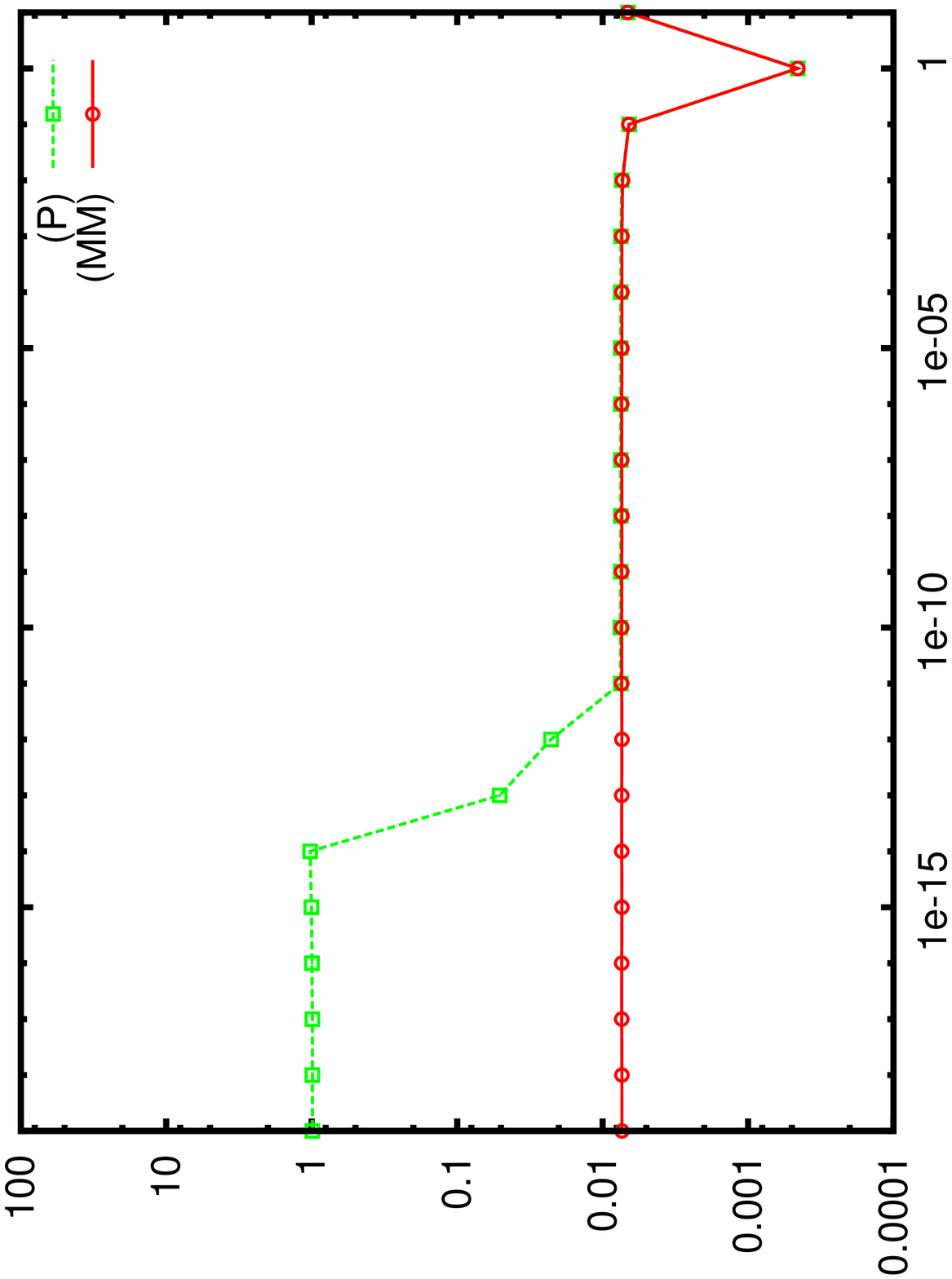}}

  \subfigure[$L^{2}$ error for a grid with $200\times 200$ points.]
  {\includegraphics[angle=-90,width=\xxxa]{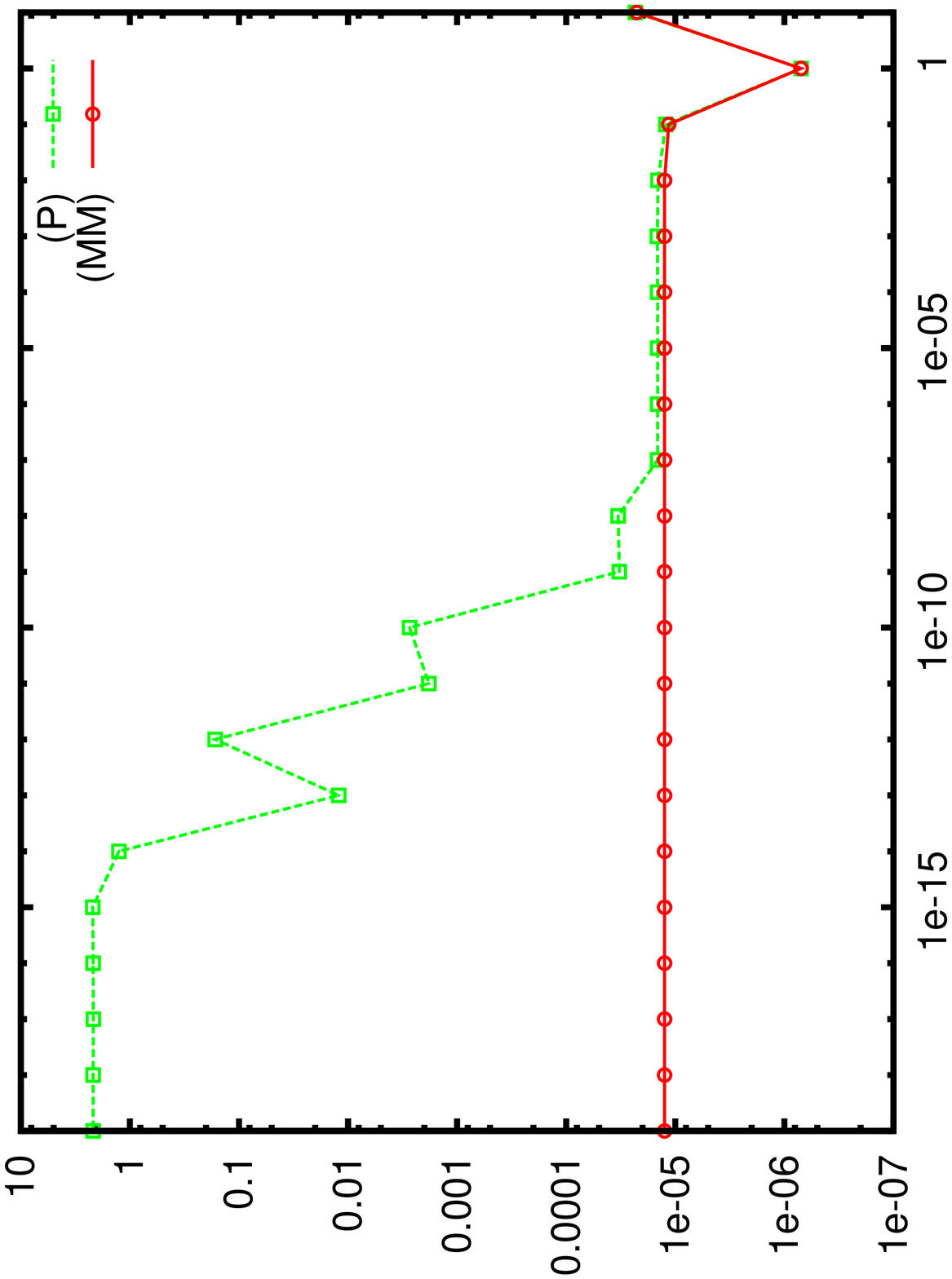}}
  \subfigure[$H^{1}$ error for a grid with $200\times 200$ points.]
  {\includegraphics[angle=-90,width=\xxxa]{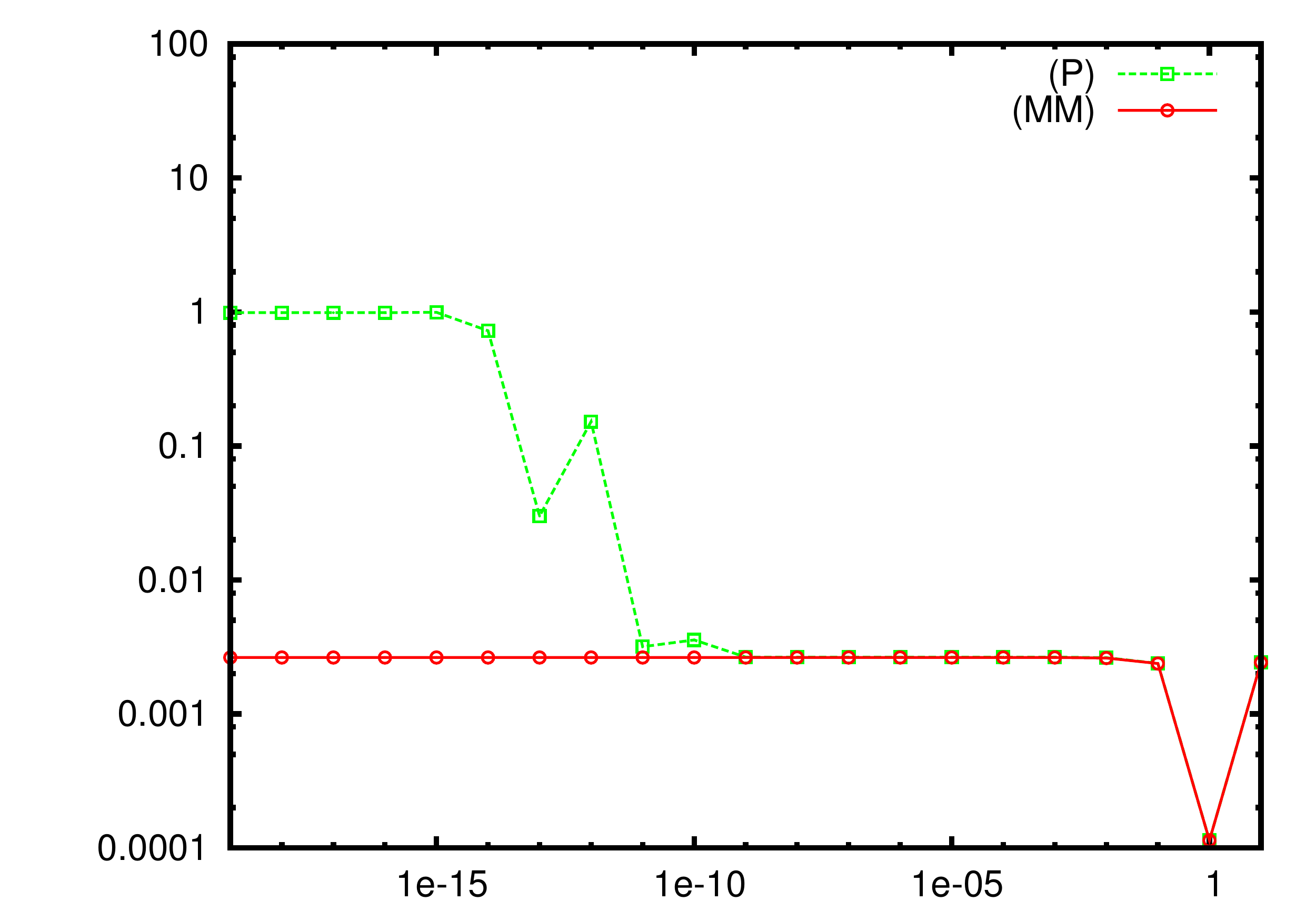}}

  \caption{Relative $L^{2}$ (left column) and $H^{1}$ (right column)
    errors between the exact solution $u^{\varepsilon }$ and the
    computed numerical solution $u_M$ (MM), $u_P$ (P) for the test
    case with constant $b$ and variable $\eps$. The error is plotted
    as a function of the parameter $\eps_{min}$ and for three
    different mesh-sizes.}
  \label{fig:error_ev_bc}
\end{figure}

\subsection{2D test case, variable $\varepsilon$, uniform and
  aligned $b$-field}

Let us consider a test case, where $\eps$ is close to $1$ in one part of
the computational domain and close to some fixed parameter
$\eps_{min}$ in the remaining part. The anisotropy varies smoothly and
changes its value in a relatively narrow transition region.
We set
\begin{gather}
  \eps(x,y) = \frac{1}{2} 
  \left[
    1+\tanh\left(a(x_0 - x)\right) +\eps_{min}  \left(1-\tanh\left(a(x_0 - x)\right)\right)
  \right]
  \label{eq:ev},
\end{gather}
with $a$ being a parameter which controls the width of the transition
region and $x_0$ the position of the interface. In our simulations we
set $x_0=0.25$ and $a=50$.

\begin{remark}
  One should put extreme attention in coding the $\eps(x,y)$
  function. If $\eps_{min}$ is smaller than the numerical precision,
  the term $1+\tanh\left(a(x_0 - x)\right)$ dominates and hence the
  value of $\eps_{min}$ is never reached. Instead one should replace
  $1+\tanh\left(a(x_0 - x)\right)$ by the equivalent term $\frac{2
    e^{2a(x_0-x)}}{e^{2a(x_0-x)}+1}$, which is not limited by a
  computer precision.
\end{remark}

 The solution $u^{\varepsilon }$ of (\ref{P}) is now given by
\begin{gather*}
  u^{\varepsilon } = \sin \left(\pi y \right) + \varepsilon \cos \left( 2\pi x\right)
  \sin \left(\pi y \right)
\end{gather*}
and let the force term $f$ be calculated accordingly.

As in the previous examples we solve the problem using $\mathbb
Q_2$-FEM. We compare the results of the Micro-Macro
reformulation with the Singular Perturbation model. The simulation
results are presented on Figure \ref{fig:error_ev_bc}. The convergence
of the MM-scheme is given in the Table \ref{tab:conv_ev}. Note that
for small mesh sizes and small values of $\eps_{min}$ superconvergence
occurs. This is caused by the small size of the transition region
between two $\eps$ regimes. For sufficiently small meshes the
convergence attains the optimal rate. Similarly as in the constant
$\eps$ test cases, the Micro-Macro scheme is capable to
produce the accurate results regardless of the anisotropy strength.

\begin{table}
  \centering
  \begin{tabular}{|c||c|c||c|c|}
    \hline
    \multirow{2}{*}{$h$}  
    & \multicolumn{2}{|c||}{\rule{0pt}{2.5ex}$\eps_{min}=1$} 
    & \multicolumn{2}{|c|}{$\eps_{min}=10^{-20}$} \\
    \cline{2-5} 
    &\rule{0pt}{2.5ex}
    $L^{2}$-error& $H^{1}$-error& $L^{2}$-error & $H^{1}$-error\\
    \hline
    \hline\rule{0pt}{2.5ex}
    0.1 &
    $5.7\times 10^{-3}$ &
    $1.86\times 10^{-1}$ &
    $2.94$ &
    $9.3\times 10^{-1}$ 
    \\
    \hline\rule{0pt}{2.5ex}
    0.05 &
    $7.3\times 10^{-4}$ &
    $4.7\times 10^{-2}$ &
    $2.12\times 10^{-1}$ &
    $7.5\times 10^{-1}$ 
    \\
    \hline\rule{0pt}{2.5ex}
    0.025 &
    $9.1\times 10^{-5}$ &
    $1.18\times 10^{-2}$ &
    $1.74\times 10^{-2}$ &
    $1.51\times 10^{-1}$ 
    \\
    \hline\rule{0pt}{2.5ex}
    0.0125 &
    $1.14\times 10^{-5}$ &
    $2.96\times 10^{-3}$ &
    $3.14\times 10^{-4}$ &
    $6.3 \times 10^{-2}$ 
    \\
    \hline\rule{0pt}{2.5ex}
    0.00625 &
    $1.43\times 10^{-6}$ &
    $7.4 \times 10^{-4}$ &
    $2.09\times 10^{-5}$ &
    $1.29\times 10^{-2}$ 
    \\
    \hline\rule{0pt}{2.5ex}
    0.003125 &
    $1.78\times 10^{-7}$ &
    $1.85\times 10^{-4}$ &
    $2.60\times 10^{-6}$ &
    $3.2 \times 10^{-3}$ 
    \\
    \hline\rule{0pt}{2.5ex}
    0.0015625 &
    $2.23\times 10^{-8}$ &
    $4.6\times 10^{-5}$ &
    $3.3\times 10^{-7}$ &
    $8.1\times 10^{-4}$ 
    \\
    \hline
  \end{tabular}
  \caption{The absolute error of $u$ in $L^{2}$ and $H^{1}$-norms 
    for different mesh sizes and $\eps_{min} =1$ resp. $\eps_{min}
    =10^{-20}$ using the Micro-Macro scheme (MM) for the
    variable $\eps$ and constant $b$ test case. 
  }
  \label{tab:conv_ev}
\end{table}

\def\xxxa{0.45\textwidth}
\begin{figure}[!ht] 
  \centering
  \subfigure[$L^{2}$ error for a grid with $50\times 50$ points.]
  {\includegraphics[angle=-90,width=\xxxa]{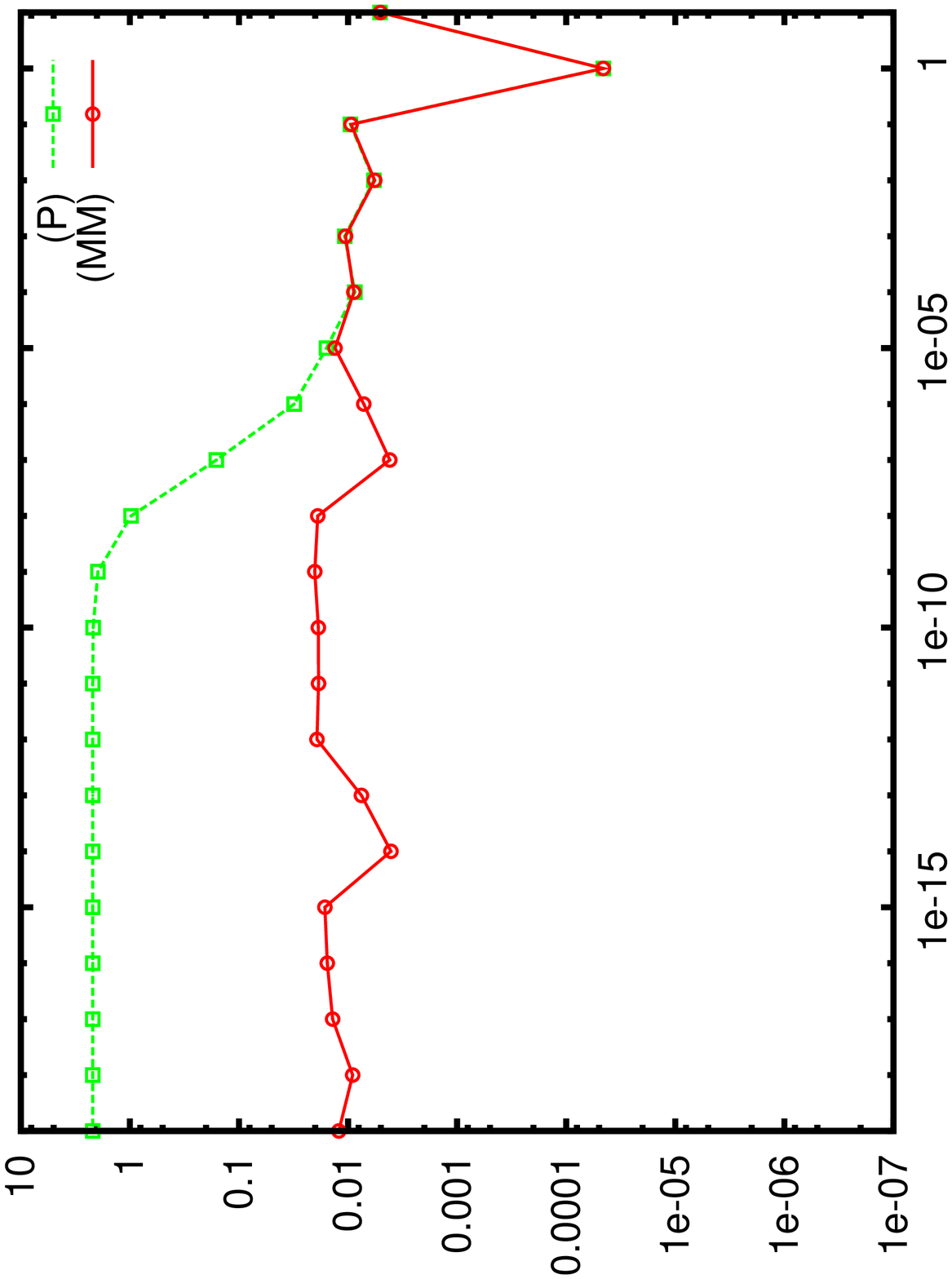}}
  \subfigure[$H^{1}$ error for a grid with $50\times 50$ points.]
  {\includegraphics[angle=-90,width=\xxxa]{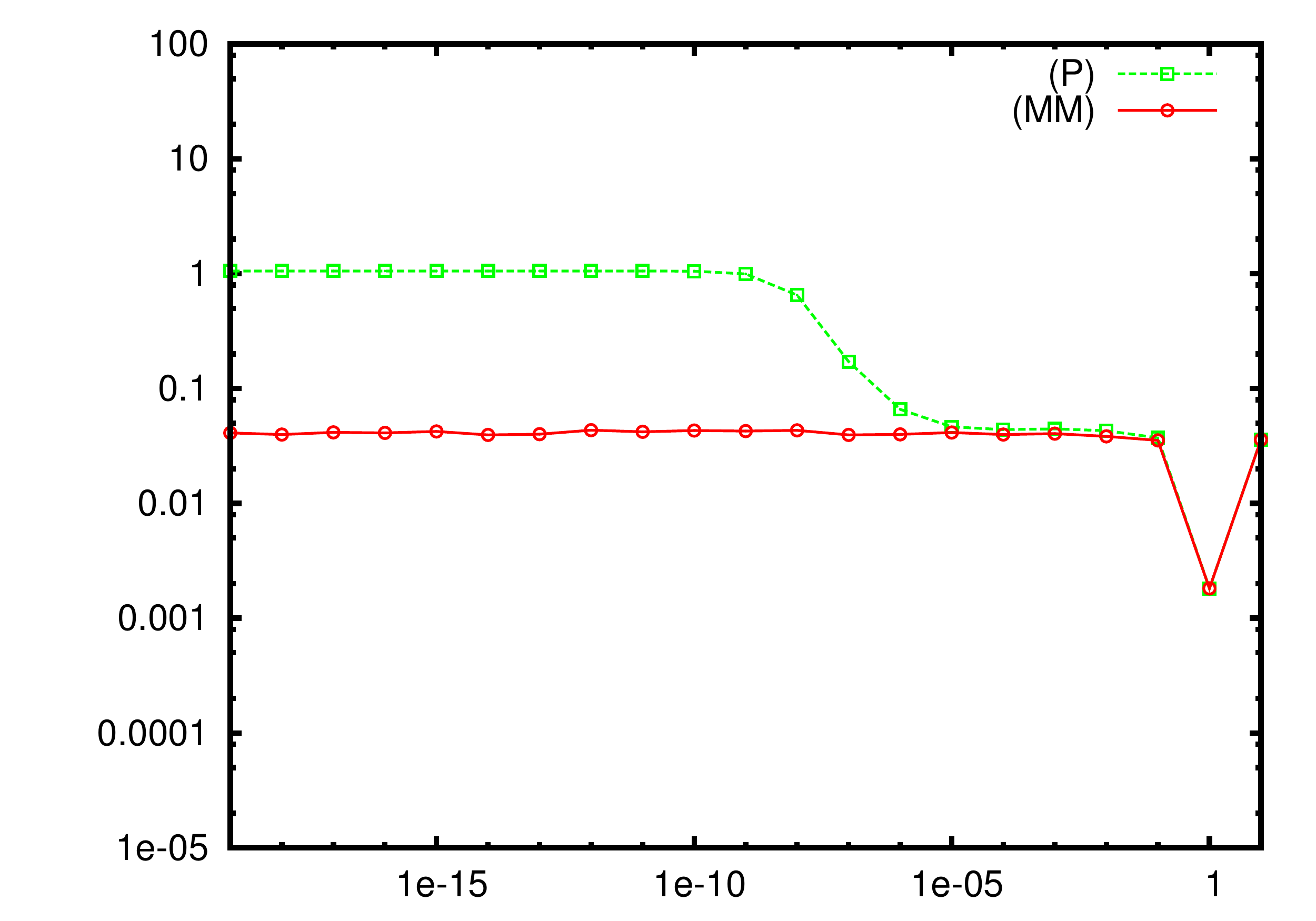}}

  \subfigure[$L^{2}$ error for a grid with $100\times 100$ points.]
  {\includegraphics[angle=-90,width=\xxxa]{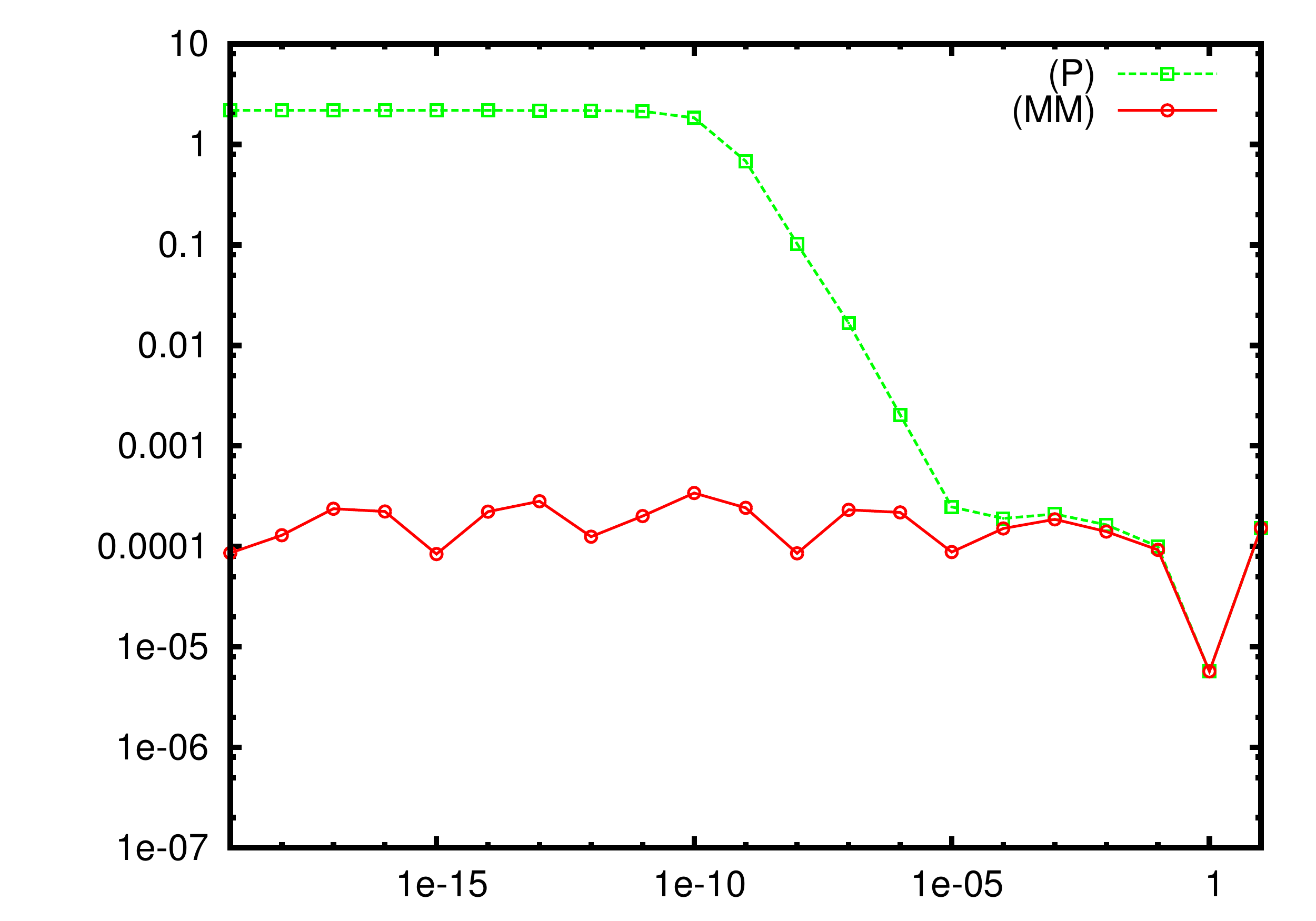}}
  \subfigure[$H^{1}$ error for a grid with $100\times 100$ points.]
  {\includegraphics[angle=-90,width=\xxxa]{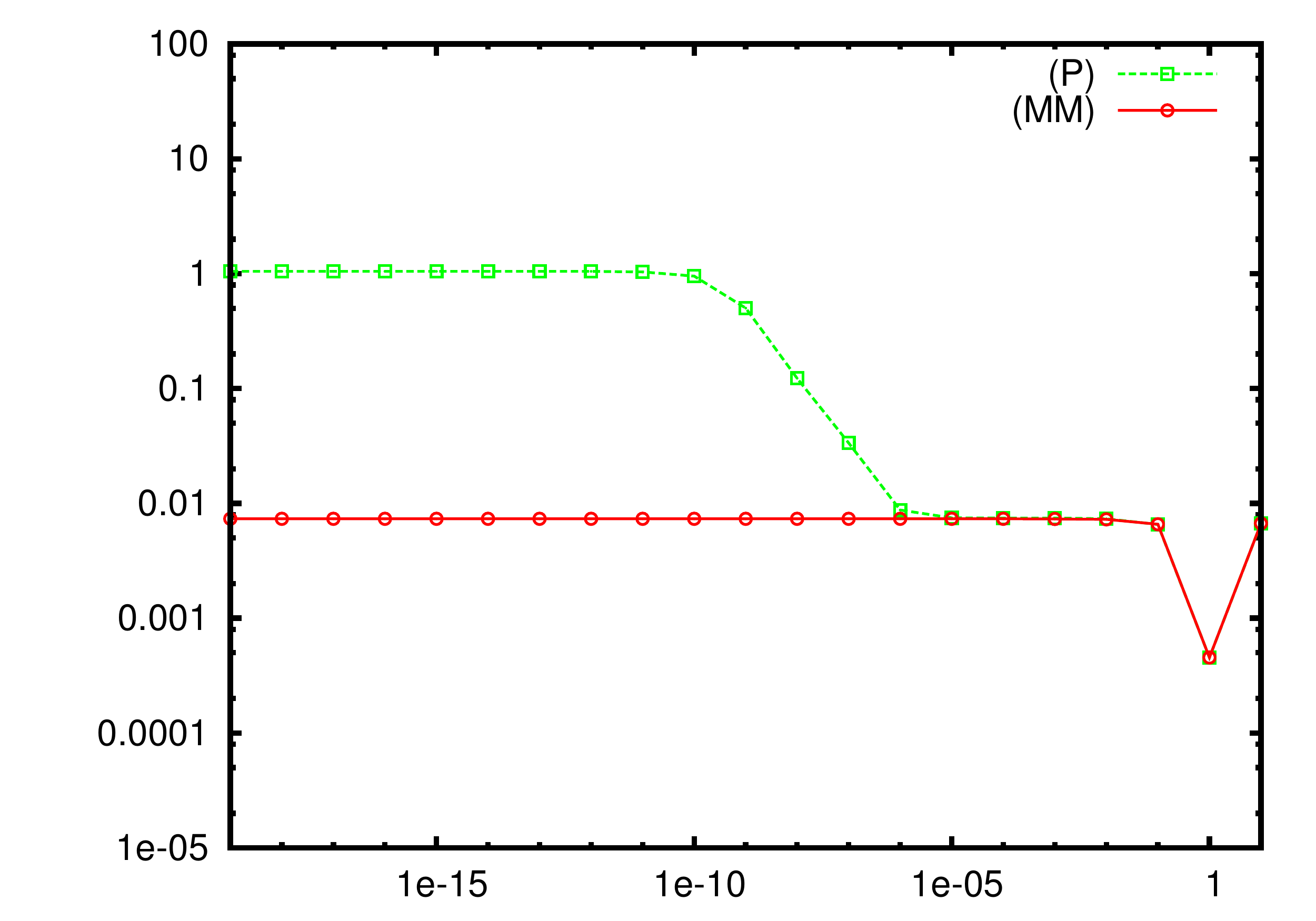}}

  \subfigure[$L^{2}$ error for a grid with $200\times 200$ points.]
  {\includegraphics[angle=-90,width=\xxxa]{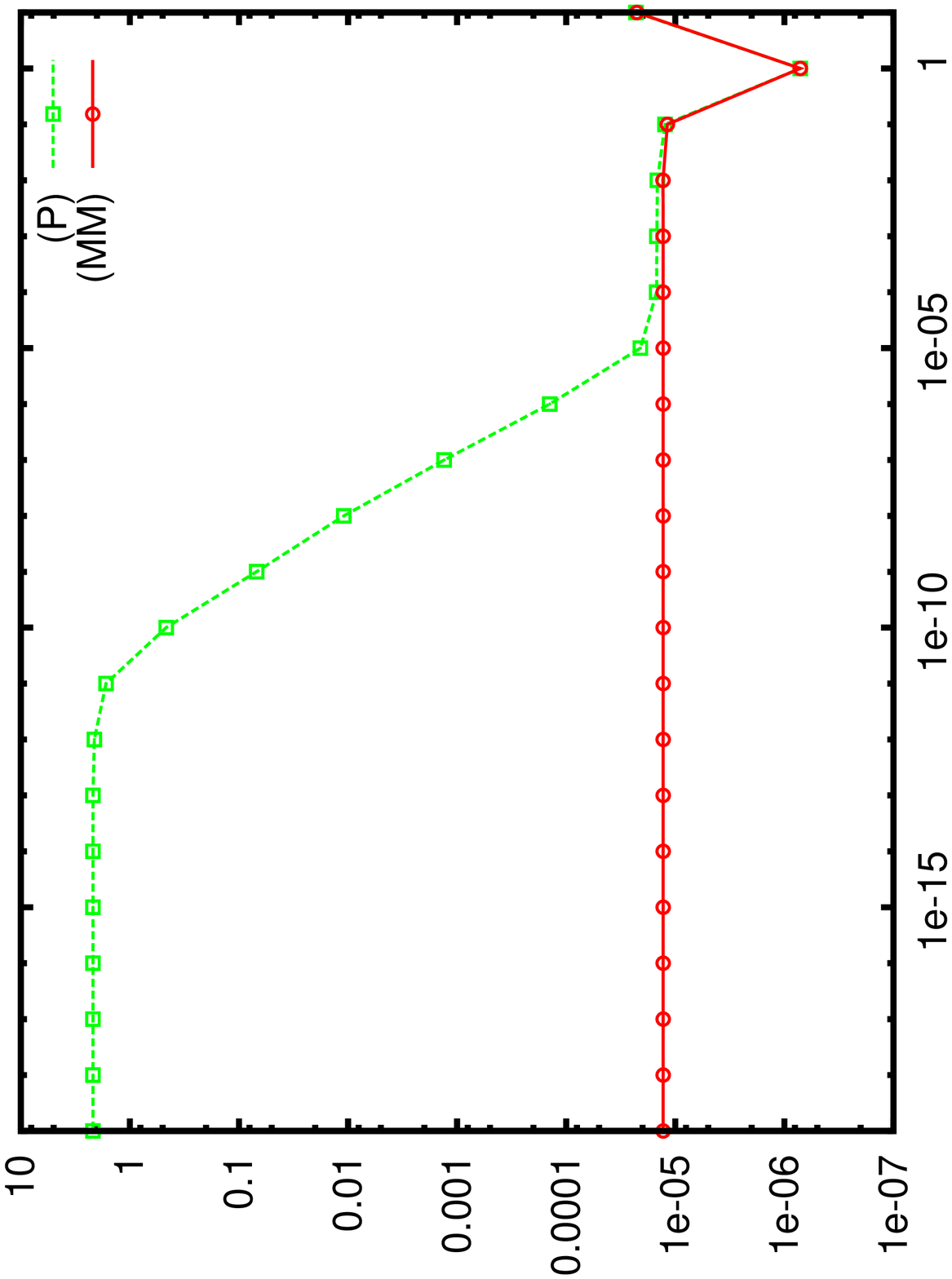}}
  \subfigure[$H^{1}$ error for a grid with $200\times 200$ points.]
  {\includegraphics[angle=-90,width=\xxxa]{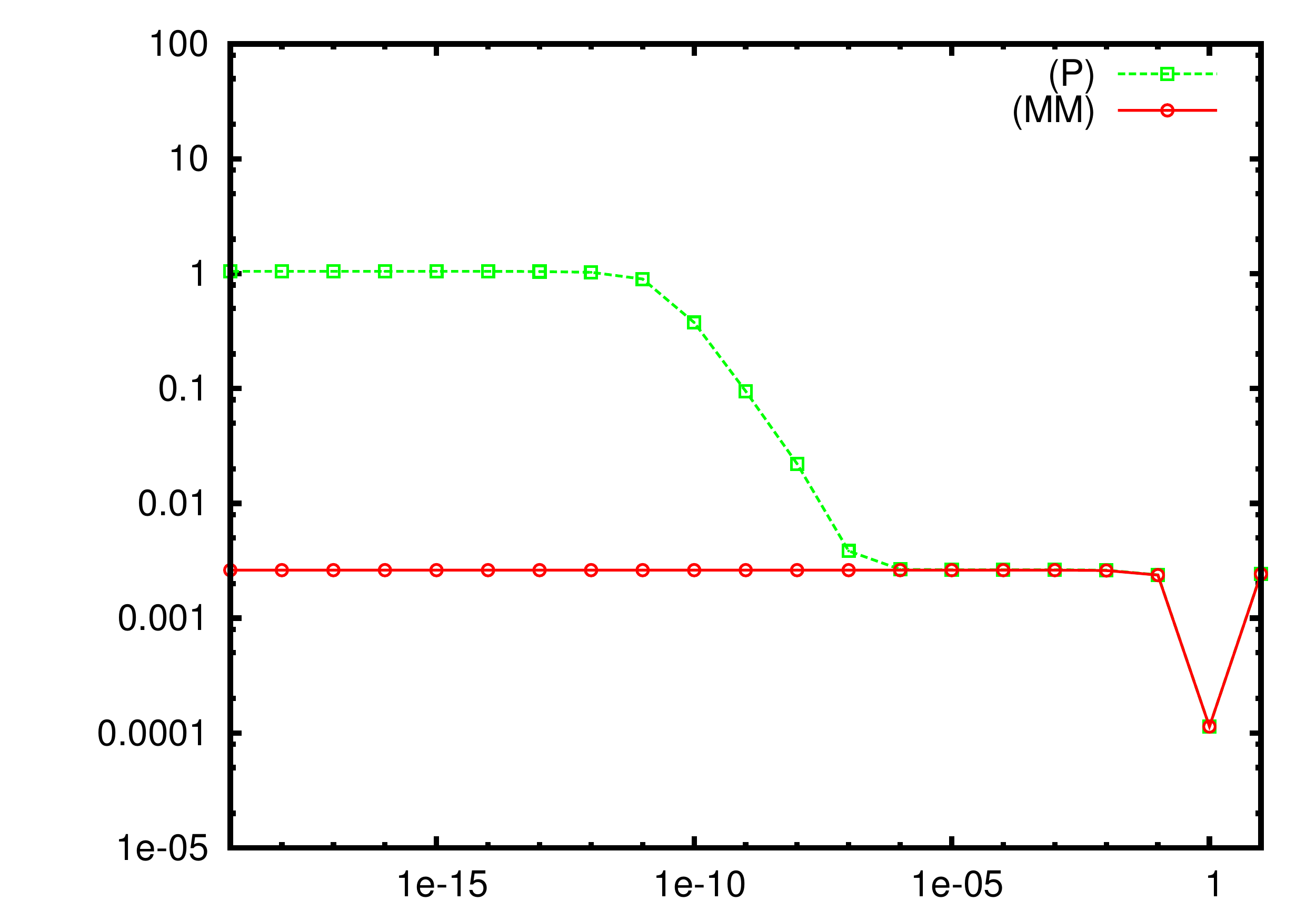}}

  \caption{Relative $L^{2}$ (left column) and $H^{1}$ (right column)
    errors between the exact solution $u^{\varepsilon }$ and the
    computed numerical solution $u_M$ (MM) resp. $u_P$ (P) for the test
    case with variable $b$ and $\eps$. The error is plotted as a
    function of the parameter $\eps_{min}$ and for three different
    mesh-sizes.}
  \label{fig:error_ev_bv}
\end{figure}

\subsection{2D test case, variable $\varepsilon$, non-uniform and non-aligned $b$-field}

In this section we choose the same test case as in  Section
\ref{sec:tc_bv} but again the epsilon varies in the computational
domain and is defined by (\ref{eq:ev}). The analytical solution to the
problem is 
\begin{gather*}
  u^{\varepsilon } = \sin \left(\pi y +\alpha (y^2-y)\cos (\pi
    x) \right) + \varepsilon \cos \left( 2\pi x\right) \sin \left(\pi
    y \right)
\end{gather*}
where $\alpha=2$ and the force term $f$ is calculated accordingly.

The results are shown on Figure \ref{fig:error_ev_bv}. Again, contrary
to the Singular Perturbation formulation, the Micro-Macro Asymptotic-Preserving
formulation is capable to produce reliable numerical results
regardless of the anisotropy strength. Similarly, the optimal
convergence rate is attained when the mesh size is small enough to capture
the $\eps$ transition.

\section{Conclusion} \label{SEC5}
The construction of the here introduced Micro-Macro based
Asymptotic-Preserving scheme for the resolution of an anisotropic
diffusion equation was based on a different reformulation of the
initial Singular Perturbation problem as compared to the earlier work
\cite{DDLNN}. The advantages of the new MM-verison were shown
numerically, in particular the considerable gain in simulation time
and the simplicity to treat variable $\eps$-intensities within the
domain. The rigorous numerical analysis of both AP-reformulations will
be the aim of a futur work, in particular the AP-property shall be
shown, i.e. the uniform convergence of the scheme with respect to
$\eps$.
\section*{Acknowledgments}
We acknowledge the support of the ``Federation de Recherche sur la
Fusion Magn\'etique'' Euroatom and CEA, under the grand ``APPLA'', of
the University Paul Sabatier Scientific Council under the grand
``Mositer'' and of the ''Agence Nationale pour la Recherche (ANR)'' in
the frame of the contract ''BOOST''.

\medskip

\bibliographystyle{abbrv}
\bibliography{bib_aniso}

\end{document}